\newcommand{\R}{\mathbb{R}}
\newcommand{\N}{\mathbb{N}}
\newcommand{\La}{\mathcal{L}_{\mathfrak{b},  \delta}}
\DeclareMathOperator{\supp}{supp}
\tikzset{cross/.style={cross out}, minimum size=1pt, draw=black, inner sep =0pt, outer sep=0pt, cross/.default={1pt}}
\definecolor{vertfonce}{rgb}{0.20, 0.46, 0.25}
\definecolor{rougefonce}{rgb}{0.64, 0.09, 0.20}
\pgfplotsset{compat=1.15}
\definecolor{zzttqq}{rgb}{0.6,0.2,0.}
\definecolor{xdxdff}{rgb}{0.49019607843137253,0.49019607843137253,1.}
\definecolor{uuuuuu}{rgb}{0.26666666666666666,0.26666666666666666,0.26666666666666666}
\definecolor{bblue}{rgb}{0,0.2,0.7}
\numberwithin{equation}{section}
\theoremstyle{definition}
\newtheorem{theorem}{Theorem}[section]
\newtheorem{lemma}[theorem]{Lemma}
\newtheorem{defin}[theorem]{Definition}
\newtheorem{prop}[theorem]{Proposition}
\newtheorem{remark}[theorem]{Remark}
\DeclarePairedDelimiter{\bigabs}{\bigg|}{\bigg|}
\DeclarePairedDelimiter{\abs}{\lvert}{\rvert}
\DeclarePairedDelimiter{\paren}{\lparen}{\rparen}
\DeclarePairedDelimiter{\norm}{\lVert}{\rVert}
\title[Magnetic Laplacian on almost flat magnetic barriers]{Discrete spectrum of the magnetic Laplacian on almost flat magnetic barriers.}
\author{Germ\'an Miranda}
\begin{document}
	
	\begin{abstract}
		The magnetic Laplacian with a step magnetic field has been intensively studied during the last years. We adapt the construction in~\cite{Almostflat} to prove the existence of bound states of a new effective operator involving a magnetic step field on a domain with an almost flat magnetic barrier. This result emphasizes the fact that even a small non-smoothness of the discontinuity region can cause the appearance of eigenvalues below the essential spectrum. We also give an example where this effective operator arises. 
	\end{abstract}
	\maketitle
	\section{Introduction}\label{Introduction section}
	\subsection{Motivation} The  Ginzburg--Landau model when the applied magnetic field is uniform has been intensively studied in several previous works. A good summary of these results can be found in the monograph \cite{FournaisHelfferbook}. The appearance of fabrication techniques allowing the creation of nonhomogeneous magnetic fields \cite{FodenLeadbeaterBurroughes, Smith1994} drew the attention to non-uniform fields. Later, piecewise constant magnetic fields were introduced in order to create graphene magnetic wave\-guides~\cite{Ghosh, Oroszlany}. These types of magnetic fields are of great interest, because they induce so-called snake states, which are trajectories of charge carriers propagating along the magnetic barrier \cite{ PeetersMatulis, Reijniers_2000, DombrowskHislop, HislopSoccorsi, HislopPopoff}. 

	In the last decade, models with piecewise constant magnetic fields, also called magnetic steps, have been further studied (see \cite{AssaadGiacomelli, AssaadBreakdown, AssadKachmarSundqvist, AssadInfluence}). In these works, smooth magnetic barriers in bounded domains are considered. In this paper, we present a new effective operator involving a magnetic step field where the magnetic barrier is not a smooth curve.
	
	\subsection{Main result}
	To keep the notation concise, we follow the one introduced in \cite{HislopPopoff}. Let \(\mathfrak{b}=(b_1, b_2)\in \R^2\) and \(\delta\in [0, \frac{\pi}{2})\). Moreover, consider the function \(g_{\delta}\) defined as
	\begin{equation}\label{Magnetic barrier line}
		g_{\delta}(x_1) = \begin{cases}
			x_1\tan{\delta} & \text{ if } x_1\leq 0\\
			0 & \text{ if } x_1>0.        
		\end{cases}
	\end{equation}
	Let
	\begin{equation}\label{Omega a}
		\Omega_{1, \delta}:= \{(x_1, x_2) \in \R^2 : x_2 <g_{\delta}(x_1)\},
	\end{equation}
	and
	\begin{equation}\label{Omega b}
		\Omega_{2,\delta}:= \{(x_1, x_2) \in \R^2 : x_2 >g_{\delta}(x_1)\}.
	\end{equation}
	{
		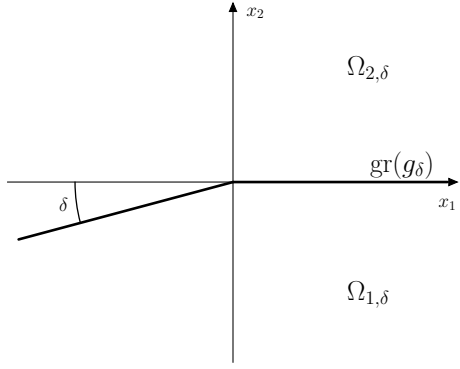
\begin{figure}[ht!]
			\scalebox{0.5}{
				\begin{tikzpicture}[line cap=round,line join=round,>=triangle 45,x=0.6cm,y=0.6cm]
					\clip(-10,-8) rectangle (10,8);
					\draw[thick,->] (-10,0) -- (10,0) node[anchor=north west]{}; 
					\draw[thick,->] (0,-8) -- (0,8) node[anchor=south east]{}; 
					\draw [line width=2.pt, domain=0:9.5] plot(\x,0); 
					\draw [line width=2.pt, domain=-9.5:0] plot(\x,{\x* 0.26794919243});
					\draw [line width=1.pt] (-7,0) arc (180: 195: 7); 
					\draw[color= black] (-7.5, -1) node {\Large \(\delta\)}; 
					\draw[color= black] (9.5, -1) node {\Large \(x_1\)}; 
					\draw[color= black] (1, 7.5) node {\Large \(x_2\)}; 
					\draw[color= black] (6, -5) node {\huge\(\Omega_{1, \delta}\)}; 
					\draw[color= black] (6, 5) node {\huge \(\Omega_{2, \delta}\)}; 
					\draw[color= black] (7.5, 0.75) node {\huge \(\text{gr} (g_{\delta})\)}; 
				\end{tikzpicture}
			}
			\caption{Graph of \(g_{\delta}\) and the two regions \(\Omega_{1, \delta}\) and \(\Omega_{2, \delta}\).}
		\end{figure}
		We consider the following operator in \(\R^2\)
		\begin{equation}\label{La}
			\mathcal{L}_{\mathfrak{b}, \delta}  = -(\nabla -i \mathcal{A}_{\mathfrak{b}, \delta})^2,
		\end{equation}
		with \(\mathcal{A}_{\mathfrak{b}, \delta} := (A_{\mathfrak{b}, \delta}, 0)\) 
		\begin{equation}\label{Aa,b,delta definition}
			A_{\mathfrak{b}, \delta} (x_1, x_2) = \begin{cases}
				- b_1 x_2 +\mathbbm{1}_{\R_-}(x_1)(b_1-b_2)x_1\tan{\delta} & \text{ if } (x_1, x_2) \in \Omega_{1, \delta}\\
				-b_2x_2 & \text{ if } (x_1, x_2) \in \overline{\Omega_{2,\delta}}
			\end{cases}
		\end{equation}
		for \(\delta \in (0, \frac{\pi}{2})\). Note that the magnetic potential \(\mathcal{A}_{\mathfrak{b},\delta}\in H^1_{\text{loc}} (\R^2, \R^2)\) and satisfies \[\text{curl }{\mathcal{A}}_{\mathfrak{b},\delta} = b_1 \mathbbm{1}_{\Omega_{1,\delta} }+ b_2\mathbbm{1}_{\Omega_{2,\delta}}.\]
		
		The domain of \(\mathcal{L}_{\mathfrak{b},\delta}\) is 
		\[\mathcal{D}(\mathcal{L}_{\mathfrak{b},\delta}) = \{u\in L^2(\R^2) : (\nabla -  i \mathcal{A}_{\mathfrak{b} , \delta }) u \in L^2(\R^2, \R^2) \text{ and }(\nabla -  i \mathcal{A}_{\mathfrak{b} , \delta })^2 u \in L^2(\R^2)\},\]
		and the associated quadratic form 
		\begin{equation}\label{Q_a,b,delta}
			\mathcal{Q}_{\mathfrak{b}, \delta} (u) = \int_{\R^2} |(\nabla - i\mathcal{A}_{\mathfrak{b},\delta}) u|^2 \, dx, 
		\end{equation}
		with \(\mathcal{D}(\mathcal{Q}_{\mathfrak{b},\delta}) = \{u\in L^2(\R^2) : (\nabla - i\mathcal{A}_{\mathfrak{b},\delta}) u\in L^2(\R^2, \R^2) \}\).  
		
		The operator \(\mathcal{L}_{\mathfrak{b},\delta}\) is self-adjoint; let \(\lambda_{\mathfrak{b}}(\delta) := \inf \text{Spec}\paren*{\mathcal{L}_{\mathfrak{b},\delta}}\). In Appendix~\ref{AppendixA} we prove that \(\inf\text{Spec}_{\text{ess}}(\mathcal{L}_{\mathfrak{b} , \delta}) = \beta_{\mathfrak{b}}\) for \(\mathfrak{b} = (1, b)\) or \(\mathfrak{b}=(b,1)\) with \(b\in [-1,0)\), where \(\beta_{\mathfrak{b}}\) is the bottom of the spectrum of \(\mathcal{L}_{\mathfrak{b},0}\).  
		
		Our main result says that if the smaller intensity of the field is in the bigger region \(\Omega_{2, \delta}\), then, for \(\delta\) small enough, there will be at least one eigenvalue below the essential spectrum of \(\mathcal{L}_{\mathfrak{b}, \delta}\). 
		\begin{theorem}\label{Main Theorem}
			If \(\mathfrak{b} = (1, b)\) with \(b\in (-1,0)\), then there exists \(\delta_0 \in (0, \frac{\pi}{2})\) such that, for all \(\delta\in (0, \delta_0)\), 
			\begin{equation}\label{Main theorem equation}
				\lambda_{\mathfrak{b}} (\delta) \leq \beta_{\mathfrak{b}} - C^2 \delta^2 +o(\delta^2),
			\end{equation}
			where \(C = \frac{M_3(\mathfrak{b})}{2} \), with \(M_3(\mathfrak{b})\) defined in \eqref{Third moment} and \(\beta_{\mathfrak{b}}\) in \eqref{Beta a definition}. In particular, \( \lambda_{\mathfrak{b}} (\delta)\) is an eigenvalue if \(\delta \) is small enough.
		\end{theorem}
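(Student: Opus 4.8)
The plan is to establish the upper bound \eqref{Main theorem equation} by the min--max principle, exhibiting a family of trial states $u_\delta\in\mathcal{D}(\mathcal{Q}_{\mathfrak{b},\delta})$ such that, as $\delta\to 0^{+}$,
\[
\mathcal{Q}_{\mathfrak{b},\delta}(u_\delta)\le\paren*{\beta_{\mathfrak{b}}-C^2\delta^2+o(\delta^2)}\norm{u_\delta}^2 .
\]
The last sentence of the theorem then follows at once: by Appendix~\ref{AppendixA} one has $\inf\text{Spec}_{\text{ess}}(\La)=\beta_{\mathfrak{b}}$, so as soon as $M_3(\mathfrak{b})\neq 0$ the right-hand side lies strictly below $\beta_{\mathfrak{b}}$ for every small $\delta>0$, forcing $\lambda_{\mathfrak{b}}(\delta)$ to be a discrete eigenvalue; checking $M_3(\mathfrak{b})\neq 0$ when $\mathfrak{b}=(1,b)$, $b\in(-1,0)$ -- for instance from a positivity property of the flat ground state below -- is therefore part of the argument.

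Two ingredients feed the construction of $u_\delta$. First, the flat operator $\mathcal{L}_{\mathfrak{b},0}$: since $\mathcal{A}_{\mathfrak{b},0}$ is invariant under translations in $x_1$, a partial Fourier transform in $x_1$ fibers it as $\int^{\oplus}\mathfrak{h}_{\mathfrak{b}}(\xi)\,d\xi$, with $\mathfrak{h}_{\mathfrak{b}}(\xi)=-\partial_t^2+(\xi+a_{\mathfrak{b}}(t))^2$ on $L^2(\R_t)$, $a_{\mathfrak{b}}(t)=b_1 t$ for $t<0$ and $b_2 t$ for $t>0$; let $\mu_{\mathfrak{b}}(\xi)$ denote its bottom eigenvalue. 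Because $b_1b_2=b<0$ (this is where the hypothesis $\mathfrak{b}=(1,b)$, $b\in(-1,0)$ enters), $\xi\mapsto\mu_{\mathfrak{b}}(\xi)$ attains its minimum $\beta_{\mathfrak{b}}$ at some $\xi_0$ with $\mu_{\mathfrak{b}}'(\xi_0)=0$ and $\mu_{\mathfrak{b}}''(\xi_0)>0$, and the $L^2$-normalized positive eigenfunction $\varphi_{\mathfrak{b}}$ of $\mathfrak{h}_{\mathfrak{b}}(\xi_0)$, together with its derivatives, decays super-exponentially (the potential $(\xi_0+a_{\mathfrak{b}})^2$ is quadratically confining). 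Second, the geometry: in $\{x_1>0\}$ the operator $\La$ is exactly $\mathcal{L}_{\mathfrak{b},0}$, while in $\{x_1<0\}$ the barrier is the line $\{x_2=x_1\tan\delta\}$, image of $\{x_2=0\}$ under the rotation $R_{-\delta}$ about the origin; a constant field being rotation invariant, in the rotated frame $\La$ again carries field $b_1$ below and $b_2$ above, and differs from the pulled-back flat operator only by an explicit gauge (the difference of potentials $\mathcal{A}_{\mathfrak{b},\delta}-R_{-\delta}^{*}\mathcal{A}_{\mathfrak{b},0}$ is a gradient, the two potentials having equal curl there). Hence on each half-plane $e^{i\xi_0\ell}\varphi_{\mathfrak{b}}(\tau)$ -- with $(\ell,\tau)$ the longitudinal/transverse coordinates of that half and the relevant gauge phase inserted -- is a quasimode of $\La$ at energy $\beta_{\mathfrak{b}}$.

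Following and adapting the construction of \cite{Almostflat}, I would glue these into a global trial state
\[
u_\delta(x)=f_\delta(\ell(x))\,e^{i\xi_0\ell(x)}\,\varphi_{\mathfrak{b}}(\tau(x))+\delta\,w_\delta(x),
\]
where $(\ell,\tau)$ are coordinates adapted to the whole broken barrier (the two half-plane systems reconciled through a corner layer near $x_1=0$ by a matching phase cut-off), $f_\delta$ is a real even envelope decaying at a rate $\propto\delta$, and $\delta\,w_\delta$ is a first-order corrector, taken orthogonal to $\varphi_{\mathfrak{b}}$ in each fiber in the standard way so that the genuine second-order shift is captured. Substituting $u_\delta$ into $\mathcal{Q}_{\mathfrak{b},\delta}$ and expanding in $\delta$: the $O(1)$ part equals $\beta_{\mathfrak{b}}\norm{u_\delta}^2$; the $O(\delta)$ part vanishes, being governed by the longitudinal current of $\varphi_{\mathfrak{b}}$ at $\xi_0$, which is zero since $\mu_{\mathfrak{b}}'(\xi_0)=0$; and the $O(\delta^2)$ part -- coming from the bending of $(\ell,\tau)$ and its Jacobian, the gauge phase, and the corrector, all localized near the corner -- assembles into an effective one-dimensional operator in $\ell$ with an attractive point interaction at $\ell=0$ whose strength is proportional to $M_3(\mathfrak{b})\,\delta$. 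Taking $f_\delta$ to be (a suitable truncation of) the corresponding bound state and optimizing then pins the gain down to $C^2\delta^2$ with $C=M_3(\mathfrak{b})/2$, $M_3$ as in \eqref{Third moment}, while all leftover contributions -- tails of $f_\delta$, corner-layer errors, cross terms with $w_\delta$ -- are $o(\delta^2)$ provided the envelope scale $(C\delta)^{-1}$ and the cut-off scales are fixed as appropriate powers of $\delta^{-1}$.

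The chief obstacle is the corner. The adapted coordinates $(\ell,\tau)$ degenerate at $x_1=0$ and $\mathcal{A}_{\mathfrak{b},\delta}$ is only Lipschitz there, so the two half-plane quasimodes must be patched through a boundary layer on which the coordinate change, the Jacobian and the gauge phase all vary at once, and one must show that their net effect is \emph{precisely} the claimed attractive interaction -- rather than a spurious positive term of order $\delta^2$ that would ruin the bound. This is exactly the point at which the configuration ``the weaker field $|b|$ occupies the larger region $\Omega_{2,\delta}$'' is used, through the sign of $M_3(\mathfrak{b})$. The remaining tasks -- the quantitative bookkeeping of the $o(\delta^2)$ remainders, which intertwines the choices of the various length scales, and the verification that $M_3(\mathfrak{b})\neq 0$ -- are more routine but are what make the ``in particular'' clause work.
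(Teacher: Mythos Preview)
Your overall strategy---construct a trial state from the flat fiber eigenfunction $\varphi_{\mathfrak{b}}$ modulated by a slowly decaying longitudinal envelope, show the $O(\delta)$ term vanishes via $M_1(\mathfrak{b})=0$ (equivalently $\mu'_{\mathfrak{b}}(\xi_0)=0$), extract a gain $-\delta M_3(\mathfrak{b})$ localized at the corner, and then optimize the envelope---is exactly the paper's strategy, and the final answer and the identification of the corner as the chief obstacle are both right. The execution, however, differs in two respects worth noting.

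First, the paper does \emph{not} use your adapted tubular coordinates $(\ell,\tau)$ nor any first-order corrector $\delta w_\delta$; the corrector is unnecessary here because the $O(\delta)$ contribution vanishes structurally and the $O(\delta^2)$ gain is purely geometric, not a spectral shift of the fiber operator. Instead, the paper exploits the \emph{axial reflection} symmetry of the domain with respect to the line $x_1=-x_2\tan(\delta/2)$ (rather than your rotation $R_{-\delta}$): the trial state on the left region $T^-$ is defined by $\overline{u_+\circ S_\delta}$ times an explicit gauge phase, which makes the energies on $T^+$ and $T^-$ identical by a one-line computation (Lemma~\ref{Lemma 1}). Second, the corner is handled not by a coordinate boundary layer but by inserting thin angular transition sectors $V^\pm$ of width $\gamma=\delta^{1/2}$ around the symmetry axis, on which the phase is an explicit linear-in-$\theta$ interpolation $\alpha(r,\theta)$ between the $T^+$ and $T^-$ phases; the energy in $V^\pm$ is then computed directly in polar coordinates (Lemmas~\ref{Lemma 3}--\ref{Lemma 7}). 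The net $O(\delta)$ balance---energy in $V^\pm$ minus the energy overcounted in the rectangular strip over $T^+$---collapses, after two integrations by parts using the eigenvalue equation for $\varphi_{\mathfrak{b}}$, to $J(\mathfrak{b})=-M_3(\mathfrak{b})+\xi_{\mathfrak{b}}^2 M_1(\mathfrak{b})=-M_3(\mathfrak{b})$. The envelope $\eta_+$ is then chosen explicitly as $e^{-\frac{M_3(\mathfrak{b})\delta}{2}|x_1|}$ (outside a short plateau), which is precisely the bound state of your ``effective point interaction'' picture and yields the $-M_3(\mathfrak{b})^2\delta^2/4$ coefficient. The sign check $M_3(\mathfrak{b})>0$ for $\mathfrak{b}=(1,b)$ follows from \eqref{Third moment} and the known sign of $\phi'_{\mathfrak{b}}(0)$ via the parity Lemma~\ref{Moment relation when change sign}.
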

		\begin{remark}
			The constant \(C\) originates from the interpolation of the trial state phase. Its choice is optimal in the sense that an improvement could only be achieved by choosing a different trial function construction. Moreover, it also indicates that the theorem is not valid for \(\mathfrak{b}=(b,1)\) because the \(\delta^ 2\) term becomes positive. A possible explanation could be by looking at the 1-D operator. For this operator, we have that the eigenfunction has negative derivative at the origin \cite[Theorem 1.1]{Moments}, which could be due to the fact that the eigenfunction is more localized in the part where the magnetic field is smaller. However, there is no satisfying explanation of this fact yet.
		\end{remark}
		\begin{remark}\label{Slight curve remark}
			If we consider \(\mathcal{L}_{\mathfrak{b}, \delta}\) as in \eqref{La} but now the magnetic barrier is given by the \(C^2\)-curve \(\gamma_{\delta}\) introduced in \cite{Almostflat} corresponding to a slightly curved line such that its algebraic curvature \(\kappa_{\delta}\) satisfied \(\kappa_{\delta}(s)= \delta\kappa(s)\), then an analogue result as \cite[Theorem 1.2]{Almostflat} can be obtained. This can be proven by using a trial state of the form \[\psi(s,t) = \chi (\ell^{-1} t) \phi_{\mathfrak{b}} g(s),\] where \((t,s)\) are classical tubular coordinates, \(\ell = \delta^{-\rho}\) for some \(\rho\in(0,1)\) and \(g(s)\) is chosen in a similar way as in \cite[Equation (4.8)]{Almostflat} but with respect to \(M_3(\mathfrak{b})\). 
			
			Another key step for this proof is to adapt \cite[Lemma A.1]{Semiclassicalbrokenedge} to apply a suitable change of gauge needed for the proof. The result will be of the form 
			\[\lambda_{\mathfrak{b}}(\delta) \leq \beta_{\mathfrak{b}} - \frac{M_3(\mathfrak{b})^2 }{4}\delta^2 + o(\delta^2),\]
			where again we will only get an eigenvalue, for small enough \(\delta\), if we bend the curve towards the region where the magnetic field is one. Similarly the magnetic field must have value \(b\in (-1,0)\) on the bigger region. 
		\end{remark}
		To prove Theorem \ref{Main Theorem}, it is enough to construct a trial state with energy below \(\beta_{\mathfrak{b}}\). In the next section we construct such trial state following the ideas introduced in \cite{Correggi} and applied in \cite{Almostflat}. The trial state is constructed from a 1-D operator called the ``trapping magnetic step" \cite{HislopPopoff} and using the axial symmetry of the domain. 
		
		In Section \ref{Section 2}, we collect useful results on known 1-D and 2-D operators with magnetic step fields. In Section \ref{Section 3}, we study the axial symmetry of the domain and use it to construct our trial state. In Section \ref{Estimate of the energy}, we compute the energy term for the trial state and in Section \ref{Section L^2 norm} we do the same for the \(L^2\) norm. In Section \ref{Section proof main theorem}, we combine the previous results to prove Theorem~\ref{Main Theorem}. In Section~\ref{Section applications} we give an application of this theorem by using \(\mathcal{L}_{\mathfrak{b}, \delta}\) as an effective operator for the Dirichlet realization of the magnetic Laplacian in bounded smooth domains presenting an almost flat magnetic barrier in their interior.
		
		\section{Preliminaries}\label{Section 2}
		If \(\delta = 0\), the operator \(\mathcal{L}_{\mathfrak{b}, 0}\) has been previously studied (see for example \cite[Section 2.4]{AssadKachmarSundqvist}) for particular values of \(\mathfrak{b}\).  We denote the ground state energy of \(\mathcal{L}_{\mathfrak{b},0}\) by
		\begin{equation}\label{Beta a definition}
			\beta_{\mathfrak{b}} := \inf\text{Spec} (\mathcal{L}_{\mathfrak{b},0}).
		\end{equation}
		The fact that \(\mathcal{L}_{\mathfrak{b},0}\) is invariant under translations in the \(x_1\)-direction allows us to use partial Fourier transform with respect to \(x_1\) in order to reduce the study of \(\mathcal{L}_{\mathfrak{b},0}\) to that of a family of Schr\"odinger operators, \(\mathfrak{h}_{\mathfrak{b}}[\xi]\), on \(L^2(\R)\) parameterized by \(\xi\in \R\) (see \cite{HislopPopoff}).  The operators \(\mathfrak{h}_{\mathfrak{b}}[\xi]\) are called fiber operators and are given by 
		\begin{equation}\label{1-D operator}
			\mathfrak{h}_{\mathfrak{b}}[\xi] = - \frac{d}{dt^2}+V_{\mathfrak{b}}(\xi, t),
		\end{equation}
		where 
		\[V_{\mathfrak{b}}(\xi, t) = \begin{cases}
			(b_1t + \xi)^2, & t<0\\
			(b_2 t+\xi)^2, & t>0\end{cases}
		\]
		Moreover, \[\mathcal{D}(h_{\mathfrak{b}}[\xi]) = \{u\in B^1(\R) : h_{\mathfrak{b}}[\xi] u \in L^2(\R), u'(0_+)=u'(0_-)\},\]where \(B^1(\R):= \{u\in L^2(\R) : t u\in L^2(\R) , u'\in L^2(\R)\}\). The associated quadratic form is given by
		\begin{equation}\label{quadratic form 1-D}
			q_{\mathfrak{b}}[\xi] (u) =\int_{\R} \left(|u'(t)|^2 +V_{\mathfrak{b}}(\xi, t) |u(t)|^2\right) dt,
		\end{equation}
		with \(\mathcal{D}(q_{\mathfrak{b}}[\xi]  )= B^1(\R)\). The spectra of the operators \(\mathcal{L}_{\mathfrak{b},0}\) and \(\mathfrak{h}_{\mathfrak{b}}[\xi]\) are connected as follows (see \cite[Section 4.3]{FournaisHelfferbook})
		\begin{equation}\label{Link Labd and h[xi]}
			\text{Spec}(\mathcal{L}_{\mathfrak{b},0}) = \overline{\bigcup_{\xi\in \R}\text{Spec}(\mathfrak{h}_{\mathfrak{b}}[\xi]}). 
		\end{equation}
		Because of this, we can rewrite the ground state energy of \(\mathcal{L}_{\mathfrak{b},0}\) introduced in \eqref{Beta a definition} as
		\begin{equation}\label{beta as 1-D}
			\beta_{\mathfrak{b}} =  \inf_{\xi\in \R}\mu_{\mathfrak{b}}(\xi),
		\end{equation}
		where \(\mu_{\mathfrak{b}}(\xi) = \inf \text{Spec} (\mathfrak{h}_{\mathfrak{b}}[\xi])\). 
		
		\begin{remark}\label{Remark b cases}
			For \(B>0\), if we introduce the \(L^2\)-unitary transform \cite{HislopPopoff} 
			\begin{equation*}
				U_B u(x_1, x_2) = B^{-1/2}u(B^{-1/2}x_1, B^{-1/2}x_2), 
			\end{equation*}
			for \(B>0\), we have \(U_B^{-1}\mathcal{L}_{\mathfrak{b}, \delta}U_B = B^{-1}\mathcal{L}_{B \mathfrak{b}, \delta}\). Hence, we can reduce the study of \(\mathcal{L}_{\mathfrak{b}, \delta}\), and consequently of \(\mathfrak{h}_{\mathfrak{b}}[\xi]\), to the following cases:
			\begin{enumerate}
				\item \textbf{``Magnetic wall" }\(\mathfrak{b} =(0,1) \) or \(\mathfrak{b} =(1,0)\). This case has already been studied in \cite{HislopPopoff} and we know that \( \inf {\text{Spec}}_{\text{ess}}(\mathfrak{h}_{\mathfrak{b}}[\xi]) = [\xi^2, +\infty)\), which prevents considering an eigenfunction associated to the infimum in \eqref{beta as 1-D}. 
				\item\label{Trapping magnetic step} \textbf{``Trapping magnetic step"} \(\mathfrak{b}=(1,b)\) or \(\mathfrak{b}= (b,1)\) with \(b\in(-1,0)\). By \cite[Theorem 1.1]{Moments} we know that there exists a non-degenerate minimum \(\xi_{\mathfrak{b}}\) in \eqref{beta as 1-D} and \(\mu_{\mathfrak{b}}(\xi_{\mathfrak{b}})\) is a simple eigenvalue. Moreover, the corresponding ground state \(\phi_{\mathfrak{b}}\)  satisfies \(\phi'_{\mathfrak{b}}(0)< 0 \) when  \( \mathfrak{b}= (b,1)\) and \(\phi'_{\mathfrak{b}}(0)>0 \) when \( \mathfrak{b}= (1,b)\). 
				
				Similarly, as in \cite[Chapter 2]{Raymondbook}, one can show that  \(\phi_{\mathfrak{b}}\) can be chosen to be real-valued and that it has exponential decay (see Appendix~\ref{AppendixB} for more details). In addition to this, we also know (see \cite{HislopPopoff, AssadKachmarSundqvist})
				\[\abs{b}\Theta_0 <\beta_{\mathfrak{b}} <\abs{b}, \]
				where \(\Theta_0\) is the de Gennes constant (see \cite[Section 3.2]{FournaisHelfferbook}).
				\item\label{Symmettric trapping magnetic step} \textbf{``Symmetric trapping magnetic step" }\(\mathfrak{b} =(-1,1)\) or \(\mathfrak{b}=(1,-1)\). This case can be linked to the standard de Gennes operators defined on the half-line. There exists a unique minimum \(\xi_{-1} = -\sqrt{\Theta_0}\), where \(\mu_{\mathfrak{b}}(\xi_{-1})\) is a simple eigenvalue~\cite{HislopPopoff, Moments}. In particular \( \beta_{\mathfrak{b}} = \Theta_0 \).
				\item \textbf{``Non-trapping magnetic step" }\(\mathfrak{b}=(1,b)\) or \(\mathfrak{b}= (b,1)\) with \(b\in(0,1)\). This case has already been studied in \cite{Iwatsuka}. We know that \(\beta_{\mathfrak{b}} = b\) and that \(\mu_{\mathfrak{b}}(\xi)\) does not achieve a minimum. 
			\end{enumerate}
		\end{remark}
		The techniques required to prove Theorem \ref{Main Theorem} rely on the existence of a minimum in \eqref{beta as 1-D} corresponding to a simple eigenvalue, since we are using the associated eigenfunction in the construction of the trial state. Due to this, it is clear that only cases \ref{Trapping magnetic step} and \ref{Symmettric trapping magnetic step} can be considered.}
	
	We will need the first and third moments of \(\mathcal{L}_{\mathfrak{b}, 0}\) in the proof of Theorem \ref{Main Theorem}. We recall the definition of the \(n\)-th moment,
	\begin{equation}\label{Moment definition}
		M_n(\mathfrak{b}) = \int_{-\infty}^{+\infty} \frac{1}{\sigma_{\mathfrak{b}}(t)} \paren{\xi_{\mathfrak{b}} + \sigma_{\mathfrak{b}} (t)t}^n \abs{\phi_{\mathfrak{b}}(t)}^2 \, dt.
	\end{equation}
	where \(\sigma_{\mathfrak{b}}(t):= b_1 \mathbbm{1}_{\R_-}(t) + b_2 \mathbbm{1}_{\R_+}(t)\)  and \(\xi_{\mathfrak{b}}\) is the unique minimum of \(\xi\mapsto \mu_{\mathfrak{b}}(\xi)\).
	\begin{prop} \cite[Proposition 4.1]{Moments}
		If \(\mathfrak{b} =(b,1)\) with \(b\in [-1,0)\), then 
		\begin{equation}\label{First moment}
			M_1(\mathfrak{b}) =0 ,
		\end{equation}
		and
		\begin{equation}\label{Third moment}
			M_3(\mathfrak{b}) = \frac{1}{3}\bigg (\frac{1}{b}-1\bigg ) \xi_{\mathfrak{b}} \phi_{\mathfrak{b}}(0)\phi'_{\mathfrak{b}}(0),
		\end{equation}
		where \(\phi_{\mathfrak{b}}\) denotes the \(L^2\)-normalized ground state of \(\mathfrak{h}_{\mathfrak{b}}[\xi_{\mathfrak{b}}]\). 
	\end{prop}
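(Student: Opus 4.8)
The plan is to derive both identities from a short list of \emph{virial-type identities}. Write $w(t):=\sigma_{\mathfrak{b}}(t)\,t+\xi_{\mathfrak{b}}$, so that $V_{\mathfrak{b}}(\xi_{\mathfrak{b}},\cdot)=w^2$, $w$ is continuous on $\R$ with $w(0)=\xi_{\mathfrak{b}}$, and $w'=\sigma_{\mathfrak{b}}\in\{b,1\}$ away from $0$. With $\phi:=\phi_{\mathfrak{b}}$, $\phi_0:=\phi(0)$, $\phi_0':=\phi'(0)$, the normalized ground state solves
\[
-\phi''+w^2\phi=\beta_{\mathfrak{b}}\,\phi\qquad\text{on }\R\setminus\{0\},
\]
is continuous and $C^1$ across $0$ (the transmission condition in $\mathcal{D}(\mathfrak{h}_{\mathfrak{b}}[\xi])$), and decays exponentially at $\pm\infty$ by Remark~\ref{Remark b cases} and Appendix~\ref{AppendixB}; so every integration by parts below is legitimate. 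Setting $P_n:=\int_{-\infty}^0 w^n\phi^2\,dt$ and $Q_n:=\int_0^{+\infty}w^n\phi^2\,dt$, the definition \eqref{Moment definition} reads $M_n(\mathfrak{b})=\tfrac1b P_n+Q_n$. The one input from outside this scheme is the criticality of $\xi_{\mathfrak{b}}$: since $\mu_{\mathfrak{b}}(\xi_{\mathfrak{b}})$ is a simple eigenvalue and a non-degenerate minimum, $\mu_{\mathfrak{b}}$ is smooth near $\xi_{\mathfrak{b}}$ and the Feynman--Hellmann formula gives $0=\tfrac12\mu_{\mathfrak{b}}'(\xi_{\mathfrak{b}})=\int_{\R}w\,\phi^2\,dt=P_1+Q_1$.

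For \eqref{First moment}: pairing the eigenvalue equation with $\phi'$ and integrating over $(-\infty,0)$ and $(0,+\infty)$ separately yields
\[
-\tfrac12\phi_0'^2+\tfrac12\xi_{\mathfrak{b}}^2\phi_0^2-bP_1=\tfrac12\beta_{\mathfrak{b}}\phi_0^2,\qquad
\tfrac12\phi_0'^2-\tfrac12\xi_{\mathfrak{b}}^2\phi_0^2-Q_1=-\tfrac12\beta_{\mathfrak{b}}\phi_0^2 .
\]
Adding these gives $bP_1+Q_1=0$, which together with $P_1+Q_1=0$ and $b\neq1$ forces $P_1=Q_1=0$, hence $M_1(\mathfrak{b})=(\tfrac1b-1)P_1=0$. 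The same two relations, with $P_1=0$, also record the identity $\phi_0'^2=(\xi_{\mathfrak{b}}^2-\beta_{\mathfrak{b}})\phi_0^2$, which is needed below.

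For \eqref{Third moment}: pairing the equation with $w\phi$ on each half-line (and using $P_1=Q_1=0$) gives $P_3=\xi_{\mathfrak{b}}\phi_0\phi_0'-\tfrac b2\phi_0^2-E_-$ and $Q_3=\tfrac12\phi_0^2-\xi_{\mathfrak{b}}\phi_0\phi_0'-E_+$, where $E_\pm:=\int_{\R_\pm}w\,(\phi')^2\,dt$; inserting these into $M_3=\tfrac1b P_3+Q_3$ produces $M_3(\mathfrak{b})=(\tfrac1b-1)\xi_{\mathfrak{b}}\phi_0\phi_0'-\big(\tfrac1b E_-+E_+\big)$. It then remains to evaluate $E_\pm$: pairing the equation with $w^2\phi'$ on each half-line gives a second pair of relations among $E_\pm,P_3,Q_3,\phi_0,\phi_0'$; substituting back the expressions just found for $P_3,Q_3$ and using $P_1=Q_1=0$ together with $\phi_0'^2=(\xi_{\mathfrak{b}}^2-\beta_{\mathfrak{b}})\phi_0^2$, the quantities $P_3,Q_3$ cancel and one solves $E_-=\tfrac13\big(2\xi_{\mathfrak{b}}\phi_0\phi_0'-b\phi_0^2\big)$, $E_+=\tfrac13\big(\phi_0^2-2\xi_{\mathfrak{b}}\phi_0\phi_0'\big)$. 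Hence $\tfrac1b E_-+E_+=\tfrac23(\tfrac1b-1)\xi_{\mathfrak{b}}\phi_0\phi_0'$, and feeding this into the displayed formula for $M_3(\mathfrak{b})$ gives exactly $\tfrac13(\tfrac1b-1)\xi_{\mathfrak{b}}\phi_0\phi_0'$, i.e.\ \eqref{Third moment}.

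The only real difficulty is organisational: one must choose the testing weights ($\phi'$, $w\phi$, $w^2\phi'$) so that the linear system they generate closes on $M_3$ without introducing uncontrolled quantities — in particular the second-moment pieces $P_2,Q_2$ have to drop out — and one must keep strict track of the boundary contributions at $0$, where $w'$ jumps from $b$ to $1$. Once those choices are fixed, the remainder is routine bookkeeping.
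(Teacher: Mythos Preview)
Your argument is correct. The paper does not actually supply a proof of this proposition; it simply quotes the result from \cite[Proposition~4.1]{Moments}, so there is no in-paper derivation to compare against. Your virial/commutator scheme --- testing the eigenvalue equation against $\phi'$, $w\phi$, and $w^2\phi'$ on each half-line, combined with the Feynman--Hellmann identity $\mu_{\mathfrak{b}}'(\xi_{\mathfrak{b}})=2\int_{\R}w\phi^2=0$ at the critical point --- checks out line by line: the $\phi'$-pairing yields $bP_1+Q_1=0$ and hence (with $P_1+Q_1=0$) both $P_1=Q_1=0$ and $\phi_0'^2=(\xi_{\mathfrak{b}}^2-\beta_{\mathfrak{b}})\phi_0^2$; the $w\phi$-pairing produces the stated expressions for $P_3,Q_3$; and the $w^2\phi'$-pairing collapses (after inserting $\phi_0'^2=(\xi_{\mathfrak{b}}^2-\beta_{\mathfrak{b}})\phi_0^2$) to the clean relations $E_-=2P_3$ and $E_+=2Q_3$, from which $E_\pm$ and then $M_3(\mathfrak{b})$ follow exactly as you claim. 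The potential pitfalls you flag --- the jump in $w'$ at $0$ and the need for the second-moment terms $P_2,Q_2$ never to enter --- are handled correctly by your choice of weights.
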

	\section{Trial state definition}\label{Section 3}
	The domains \(\Omega_{1,\delta}\) and \(\Omega_{2, \delta}\) are similar to the domain \(\Omega_{\delta}\) considered in \cite{Almostflat}. In each of these regions, the magnetic field is constant, so we can mimic the trial state construction done for \(\Omega_{\delta}\) but using \(\phi_{\mathfrak{b}}\) instead of the eigenfunction associated to the ground state of the de Gennes operator.
	\subsection{Domain axial symmetry and trial state}
	The domain presents  axial symmetry with respect to the line \(x_1 = -x_2 \tan(\delta/2) \). The matrix corresponding to the reflection with respect to this axis is given by
	\[S_{\delta}  =  \begin{pmatrix}
		-\cos{\delta}  & -\sin{\delta}  \\
		-\sin{\delta}& \cos{\delta} 
	\end{pmatrix}.\]
	\begin{prop}\label{Proposition axial symmetry}\textbf{(axial symmetry)} If \(\mathcal{L}_{\mathfrak{b},  \delta}\) admits an smallest eigenvalue, then the associated eigenfunction \(u\) can be chosen so that it will respect axial symmetry
		\begin{equation}\label{axial symmetry}
			u(x) = \overline{u(S_{\delta} (x) )}, \; \forall x\in \R^2. 
		\end{equation}
		\begin{proof}
			This follows by a direct calculation, using \(S_{\delta}(\Omega_{1, \delta})= \Omega_{1, \delta}\) and \(S_{\delta} (\Omega_{2, \delta}) = \Omega_{2, \delta}\). See \cite[Proposition 3.6]{Bonnthesis} for the same calculation in the sector.
		\end{proof}
	\end{prop}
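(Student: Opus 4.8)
The plan is to promote the geometric reflection $S_\delta$ to an antiunitary symmetry of $\mathcal L_{\mathfrak b,\delta}$ and then to symmetrize a ground state with respect to it. First I would record the relevant features of $S_\delta$: it is a real orthogonal involution ($S_\delta^{T}=S_\delta=S_\delta^{-1}$) with $\det S_\delta=-1$, and, by the elementary computation indicated in the statement, $S_\delta(\Omega_{1,\delta})=\Omega_{1,\delta}$ and $S_\delta(\Omega_{2,\delta})=\Omega_{2,\delta}$; hence the field $B_{\mathfrak b,\delta}:=\operatorname{curl}\mathcal A_{\mathfrak b,\delta}=b_1\mathbbm{1}_{\Omega_{1,\delta}}+b_2\mathbbm{1}_{\Omega_{2,\delta}}$ is $S_\delta$-invariant. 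Because a reflection reverses orientation, $\operatorname{curl}\bigl(S_\delta\,\mathcal A_{\mathfrak b,\delta}(S_\delta\,\cdot\,)\bigr)=\det(S_\delta)\,B_{\mathfrak b,\delta}\circ S_\delta=-B_{\mathfrak b,\delta}$, so the vector field $\mathcal A_{\mathfrak b,\delta}+S_\delta\,\mathcal A_{\mathfrak b,\delta}(S_\delta\,\cdot\,)$ is curl-free on $\R^2$ and therefore equals $\nabla\psi$ for some real $\psi\in H^2_{\mathrm{loc}}(\R^2)$; applying $S_\delta$ to this relation gives $\nabla(\psi-\psi\circ S_\delta)=0$, and evaluating the resulting constant at $S_\delta x$ forces $\psi\circ S_\delta=\psi$.

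Next I would introduce the antilinear map $Tu(x):=e^{i\psi(x)}\,\overline{u(S_\delta x)}$ on $L^2(\R^2)$. Since $|\det S_\delta|=1$, $T$ is an antiunitary bijection, and $\psi\circ S_\delta=\psi$ gives $T^2=\mathrm{Id}$. The computational core is the first-order identity
\[
(\nabla-i\mathcal A_{\mathfrak b,\delta})(Tu)(x)=e^{i\psi(x)}\,S_\delta\,\overline{\bigl[(\nabla-i\mathcal A_{\mathfrak b,\delta})u\bigr](S_\delta x)},
\]
which follows from $\nabla(Tu)(x)=i\bigl(\nabla\psi(x)\bigr)(Tu)(x)+e^{i\psi(x)}S_\delta\overline{(\nabla u)(S_\delta x)}$ together with $\nabla\psi-\mathcal A_{\mathfrak b,\delta}=S_\delta\mathcal A_{\mathfrak b,\delta}(S_\delta\,\cdot\,)$ and $S_\delta^{2}=I$. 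Since $S_\delta$ is orthogonal with $|\det S_\delta|=1$, this identity immediately yields $\mathcal Q_{\mathfrak b,\delta}(Tu)=\mathcal Q_{\mathfrak b,\delta}(u)$ for $u\in\mathcal D(\mathcal Q_{\mathfrak b,\delta})$, together with $\|Tu\|_{L^2}=\|u\|_{L^2}$; in particular $T$ preserves the form domain and maps the $\lambda_{\mathfrak b}(\delta)$-eigenspace of $\mathcal L_{\mathfrak b,\delta}$ (when nonempty) into itself, since $T$ sends any minimizer of the Rayleigh quotient to a minimizer. (Equivalently, a second, parallel computation gives the operator identity $T\mathcal L_{\mathfrak b,\delta}=\mathcal L_{\mathfrak b,\delta}T$ and the invariance of $\mathcal D(\mathcal L_{\mathfrak b,\delta})$, but the form-level statement suffices here.)

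Finally I would symmetrize. Assume $\lambda_{\mathfrak b}(\delta)$ is an eigenvalue and let $u_0\neq0$ be a corresponding eigenfunction; then $Tu_0$ lies in the same eigenspace, hence so do $u_0+Tu_0$ and $i(u_0-Tu_0)$. Using $T^2=\mathrm{Id}$ and antilinearity, both are fixed by $T$, and since $u_0+Tu_0=0$ would force $i(u_0-Tu_0)=2iu_0\neq0$, at least one of them is a nonzero eigenfunction $u$ satisfying $Tu=u$, i.e.\ $u(x)=e^{i\psi(x)}\overline{u(S_\delta x)}$. (If one prefers to assume the bottom eigenvalue simple, one instead writes $Tu_0=e^{i\theta}u_0$ and rescales $u:=e^{i\theta/2}u_0$.) Since $\psi$ is $S_\delta$-invariant, $v:=e^{-i\psi/2}u$ then obeys $v(x)=\overline{v(S_\delta x)}$ and is an eigenfunction of the operator obtained from $\mathcal L_{\mathfrak b,\delta}$ by the gauge change $\mathcal A_{\mathfrak b,\delta}\rightsquigarrow\mathcal A_{\mathfrak b,\delta}-\tfrac12\nabla\psi=\tfrac12\bigl(\mathcal A_{\mathfrak b,\delta}-S_\delta\mathcal A_{\mathfrak b,\delta}(S_\delta\,\cdot\,)\bigr)$, the representative of the potential that is odd under $S_\delta$; adopting this equivalent gauge from the outset makes $\psi\equiv0$ and gives \eqref{axial symmetry} directly. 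I expect the only genuinely delicate point to be the gauge bookkeeping — producing $\psi$ (equivalently, the $S_\delta$-odd representative of $\mathcal A_{\mathfrak b,\delta}$) and verifying the intertwining with care; the rest is the standard antiunitary symmetrization of a ground state, as in \cite[Proposition 3.6]{Bonnthesis}.
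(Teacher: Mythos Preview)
Your approach is correct and is precisely the ``direct calculation'' the paper alludes to: build an antiunitary involution from the reflection $S_\delta$ (using that $S_\delta$ preserves each $\Omega_{j,\delta}$ and reverses orientation), verify that it commutes with the quadratic form, and symmetrize the ground state. The paper gives no details beyond the reference to \cite[Proposition~3.6]{Bonnthesis}, so your write-up is in fact more complete than what appears there; in particular your careful handling of the gauge function $\psi$ and the observation that the literal relation \eqref{axial symmetry} singles out the $S_\delta$-odd representative of the potential is a point the paper leaves implicit.
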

	We will use this symmetry property in the construction of our trial state.  Before that, we introduce some notation. Let 
	\begin{equation}\label{Definition gamma, delta, l}
		\gamma := \delta^{\frac{1}{2}}\qquad \text{ and }\qquad  \ell := \delta^{-\frac{1}{2}}.
	\end{equation}
	As stated before, we will consider \(\delta\) small and positive. That means that \(\gamma\) will be small (but larger than \(\delta\), in particular \(\gamma >2\delta\) for \(\delta < \frac{1}{4}\)) and \(\ell\) will be very large quantity going to \(+\infty\) when \(\delta\rightarrow 0\). 
	
	The idea is to divide the domain into two parts \(T^+\) and \(T^-\) that obey the symmetry. By this we mean that \(S_{\delta}(T^+)= T^-\), and so we can use Proposition \ref{Proposition axial symmetry}. To simplify the calculations we will introduce a cutoff in the \(x_2\) direction which will be parameterized by \(\ell\). We will consider the regions \(V^+\) and \(V^-\) to glue the phase of the trial state between \(T^ +\) and \(T^-\) to ensure we have a continuous function. This division is depicted in Figure \ref{fig1}.
	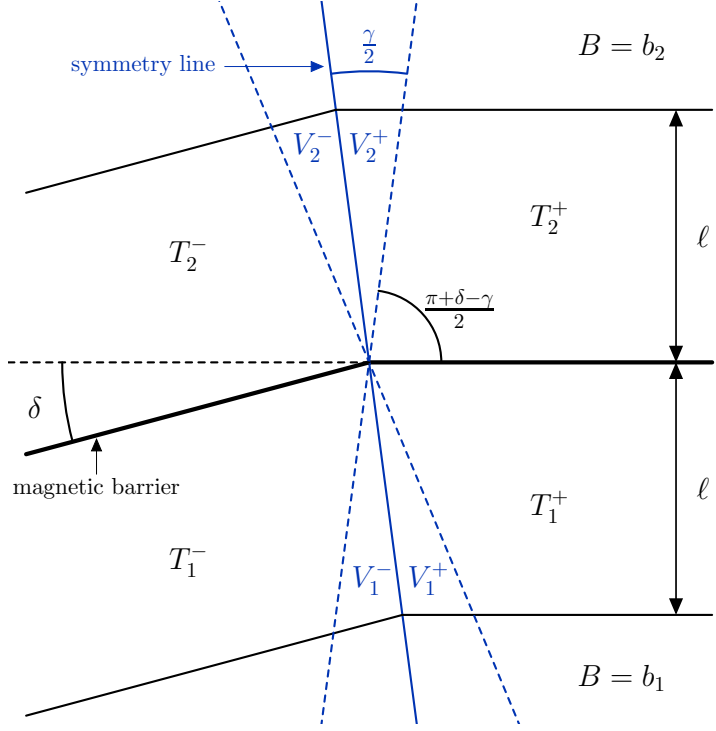
\begin{figure}[ht!]
		\scalebox{0.8}{
			\begin{tikzpicture}[line cap=round,line join=round,>=triangle 45,x=0.6cm,y=0.6cm]
				\clip(-10,-10) rectangle (10,10);
				\draw [line width=1.pt,color=bblue, domain=-10:10] plot(\x,{\x /-0.13165}); 
				\draw [line width=1.pt, domain=-0.92155:9.5] plot(\x,7); 
				\draw [line width=1.pt, domain=0.92155:9.5] plot(\x,-7); 
				\draw [line width=2.pt, domain=0:9.5] plot(\x,0); 
				\draw [line width=2.pt, domain=-9.5:0] plot(\x,{\x* 0.26794919243});
				\draw [line width=1.pt, domain=-9.5:-0.92155] plot(\x,{\x* 0.26794919243 +7.24693});
				\draw [line width=1.pt, domain=-9.5:0.92155] plot(\x,{\x* 0.26794919243 - 7.24693});
				\draw [dashed, line width=1.pt,color=bblue, domain=-10:10] plot(\x,{\x /-0.41421356237}); 
				\draw [dashed, line width=1.pt,color=bblue, domain=-10:10] plot(\x,{\x /0.13165}); 
				\draw [dashed, line width=1.pt, domain=-10:0] plot(\x,0); 
				\draw [line width=1.pt] (-8.5,0) arc (180: 195: 8.5); 
				\draw[color= black] (-9.25, -1.25) node {\Large \(\delta\)}; 
				\draw[->] (-7.55, -3.1) -- (-7.55, -2.1); 
				\draw[color = bblue, ->] (-4, 8.2) -- (-1.25, 8.2); 
				\draw[color= black] (-7.55, -3.5) node {\text{magnetic barrier}}; 
				\draw [<->,line width=0.8 pt] (8.5,0.) -- (8.5,7.);
				\draw[color= black] (9.25, 3.5) node {\Large \(\ell\)}; 
				\draw [<->,line width=0.8 pt] (8.5,0.) -- (8.5,-7.);
				\draw[color= black] (9.25, -3.5) node {\Large \(\ell\)}; 
				\draw [line width=1.pt] (2,0) arc (0: 1980/24: 2); 
				\draw[color= black] (2.5, 1.5) node {\Large \(\frac{\pi+\delta-\gamma}{2}\)}; 
				\draw [color=bblue, line width=1.pt] (1.0532,8) arc (1980/24: 1980/24 +15: 8.0690291); 
				\draw[color= bblue] (0, 8.75) node {\Large \(\frac{\gamma}{2}\)}; 
				\draw[color= black] (7, 8.75) node {\Large \(B=b_2\)}; 
				\draw[color= black] (7, -8.75) node {\Large \(B=b_1\)}; 
				\draw[color= bblue] (-6.25, 8.25) node { \text{symmetry line}}; 
				\draw[color= black] (5, 4) node {\Large \(T^+_{2}\)}; 
				\draw[color= black] (5, -4) node {\Large \(T^+_{1}\)}; 
				\draw[color= bblue] (0, 6) node { \Large \(V^+_{2}\)}; 
				\draw[color= bblue] (-1.55, 6) node {\Large \(V^-_{2}\)}; 
				\draw[color= bblue] (0.1, -6) node { \Large \(V^-_{1}\)}; 
				\draw[color= bblue] (1.65, -6) node { \Large \(V^+_{1}\)}; 
				\draw[color= black] (-5, 3) node {\Large \(T^-_{2}\)}; 
				\draw[color= black] (-5, -5.5) node {\Large \(T^-_{1}\)}; 
			\end{tikzpicture}
		}
		\caption{In this figure, we show how the domain is partitioned according to Definition \ref{Set partition}.}\label{fig1}
	\end{figure}
	\begin{defin}\label{Set partition}
		Let \(T^{\pm} :=T^{\pm}_{1} \cup T^{\pm}_{2}  \) and  \(V^{\pm} :=V^{\pm}_{1} \cup V^{\pm}_{2}  \) with 
		\[T^+_{1} = \bigg \{(x_1, x_2) \in (0, +\infty) \times (-\ell ,0) : -x_2\tan(\frac{\gamma+\delta}{2}) <x_1 \bigg \},\]
		\[T^+_{2} = \bigg \{(x_1, x_2) \in (0, +\infty) \times (0,\ell) : x_2 \tan\left(\frac{\gamma - \delta}{2}\right) <x_1\bigg \},\]
		\[ T^-_{1} = S_{\delta} (T^+_{1})\qquad \text{and} \qquad T^-_{2} = S_{\delta} (T^+_{2}).\]
		Let \(V^+_{2} \) be the sector defined in polar coordinates by  \(0<r < r_{\ast}:= \frac{\ell }{\abs{\sin{\theta}}}\) and \(\theta\in \big (\frac{\pi+\delta- \gamma}{2}, \frac{\pi+\delta}{2}\big )\). Similarly, let \(V^+_{1} \) the one with \(\theta\in \big (\frac{3\pi+\delta}{2}, \frac{3\pi+\delta+\gamma}{2}\big )\). Moreover, consider
		\[V^-_{1} := S_{\delta} (V^+_{1})\text{, and }  V^-_{2} := S_{\delta} (V^+_{2}).\]
	\end{defin}
	In order to simplify the computations it is useful to apply a change of Gauge to \(\mathcal{L}_{\mathfrak{b}, \delta}\). Consider the function \( \upzeta \in H^1_{\mathrm{loc}}(\R^2)\) defined as
	\begin{equation}\label{First change of Gauge}
		\upzeta(x) := \begin{cases}
			\frac{1}{2}\mathbbm{1}_{\R_-}(x_1) (b_1-b_2) x_1^2 \tan{\delta} & \text{if }(x_1, x_2)\in \Omega_{1,\delta}\\
			0 & \text{if }(x_1, x_2)\in \Omega_{2,\delta}.
		\end{cases} 
	\end{equation}
	Note that \(\mathcal{A}_{\mathfrak{b}, \delta} =\sigma_{\mathfrak{b}, \delta}\mathcal{A} + \nabla \upzeta \) where \(\mathcal{A}(x_1, x_2) = (- x_2 , 0 ) \), and 
	\begin{equation}
		\sigma_{\mathfrak{b}, \delta}(x_1, x_2) = \begin{cases}
			b_1 & \text{if } (x_1, x_2) \in \Omega_{1, \delta}\\
			b_2 & \text{if } (x_1, x_2) \in \Omega_{2, \delta} .
		\end{cases}
	\end{equation}
	Thus, using \cite[Lemma 1.1]{LeinfelderArticle}, we can consider the quadratic form 
	\begin{equation*}
		\tilde{\mathcal{Q}}_{\mathfrak{b}, \delta}(v) = \int_{\R^2} |(\nabla - i\sigma_{\mathfrak{b}, \delta}\mathcal{A} ) v|^2 \, dx,
	\end{equation*}
	with domain \(\mathcal{D}(\tilde{\mathcal{Q}}_{\mathfrak{b}, \delta}) = \{v\in L^2(\R^2) : (\nabla - i\sigma_{\mathfrak{b}, \delta}\mathcal{A} )v \in L^2(\R^2 , \R^2)\}\). The Gauge invariance implies that 
	\begin{equation}\label{Quadratic gauge invariance}
		\tilde{\mathcal{Q}}_{\mathfrak{b}, \delta}(v) =  \mathcal{Q}_{\mathfrak{b}, \delta}(e^{i\upzeta}v),
	\end{equation}
	for all \(v\in \mathcal{D}(\tilde{\mathcal{Q}}_{\mathfrak{b}, \delta})  \). 
	\begin{lemma}\label{Lemma 1}
		Let \(u_+\) be a function such that 
		\begin{equation*}
			\int_{T^+} |(\nabla - i\sigma_{\mathfrak{b}, \delta} \mathcal{A})u_+|^2 dx < +\infty.
		\end{equation*}
		Consider
		\begin{equation}
			\varphi(y) :=-\frac{\sin{\delta}}{2}y^T S_{\delta} y =  \frac{\sin(2\delta)}{4} (y_1^2 -y_2^2) +y_1y_2 \sin^2{\delta},
		\end{equation}
		and 
		\begin{equation}\label{u_- definition}
			u_- (x) = \begin{cases}
				e^{-ib_1 \varphi(S_{\delta}x)}\overline{u_+(S_{\delta}(x))} & \text{ if } x\in T_{1}^- \\
				e^{-ib_2\varphi(S_{\delta}x)}\overline{u_+(S_{\delta}(x))} & \text{ if } x\in T_{2}^- 
			\end{cases} ,
		\end{equation}
		then we have
		\begin{equation}
			\int_{T^+} |(\nabla - i\sigma_{\mathfrak{b}, \delta} \mathcal{A})u_+|^2 dx = \int_{T^-} |(\nabla - i\sigma_{\mathfrak{b},\delta} \mathcal{A})u_-|^2 dx.
		\end{equation}
	\end{lemma}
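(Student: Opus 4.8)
The plan is to reduce the claimed identity to the pointwise equality
\begin{equation}\label{pointwiseplan}
  \abs{(\nabla - i\sigma_{\mathfrak{b},\delta}\A)\,u_-}(x) \;=\; \abs{\bigl((\nabla - i\sigma_{\mathfrak{b},\delta}\A)\,u_+\bigr)(S_\delta x)}\qquad\text{for a.e. }x\in T^-.
\end{equation}
Granting \eqref{pointwiseplan}, the lemma follows at once: \(S_\delta\) is an orthogonal involution (so \(S_\delta^{T}=S_\delta\), \(S_\delta^2=I\), \(\abs{\det S_\delta}=1\)), it maps \(T^{\mp}_j\) onto \(T^{\pm}_j\) for \(j=1,2\), and it preserves \(\Omega_{1,\delta}\) and \(\Omega_{2,\delta}\), so that \(\sigma_{\mathfrak{b},\delta}\circ S_\delta=\sigma_{\mathfrak{b},\delta}\); integrating \eqref{pointwiseplan} over \(T^-_1\) and \(T^-_2\) and substituting \(y=S_\delta x\) in each piece yields \(\int_{T^-}|(\nabla-i\sigma_{\mathfrak{b},\delta}\A)u_-|^2=\int_{T^+}|(\nabla-i\sigma_{\mathfrak{b},\delta}\A)u_+|^2\). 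At the outset I would record the two elementary invariances of the scalar magnetic integrand that drive the argument: for real \(A\) and smooth real \(\chi\),
\[
  \abs{(\nabla - iA)(e^{i\chi}f)}=\abs{(\nabla-i(A-\nabla\chi))f},\qquad \abs{(\nabla-iA)\overline f}=\abs{(\nabla+iA)f}.
\]

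To prove \eqref{pointwiseplan} I would fix a piece \(T^-_j\) and set \(b:=b_j\), so that \(\sigma_{\mathfrak{b},\delta}\equiv b\) on \(T^-_j\) and, since \(S_\delta x\in T^+_j\subset\Omega_{j,\delta}\), also \(\sigma_{\mathfrak{b},\delta}(S_\delta x)=b\). First I would note the symmetry \(\varphi\circ S_\delta=\varphi\), immediate from \(\varphi(y)=-\tfrac{\sin\delta}{2}\,y^{T}S_\delta y\) and the fact that \(S_\delta\) is an orthogonal involution; hence on \(T^-_j\) one has \(u_-(x)=e^{-ib\varphi(x)}\,\overline{u_+(S_\delta x)}\). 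Now I would expand \((\nabla_x-ib\A(x))u_-\) by the product rule: extracting the unimodular factor \(e^{-ib\varphi}\) turns the potential \(b\A\) into \(b(\A+\nabla\varphi)\) acting on \(\overline{u_+(S_\delta\,\cdot\,)}\); complex conjugation then turns this into \(-b(\A+\nabla\varphi)\) acting on \(u_+(S_\delta\,\cdot\,)\); and the chain rule \(\nabla_x\bigl(u_+(S_\delta x)\bigr)=S_\delta(\nabla u_+)(S_\delta x)\) together with the orthogonality of \(S_\delta\) pulls the modulus back to \(\abs{\bigl((\nabla-ib\A)u_+\bigr)(S_\delta x)}\), \emph{provided} the two vector potentials match, i.e.\ provided
\begin{equation}\label{keyidplan}
  S_\delta\,\A(S_\delta y)=-\A(y)-\nabla\varphi(y).
\end{equation}

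Identity \eqref{keyidplan} is the only genuine computation and the whole point of the choice of \(\varphi\): both sides are linear in \(y\), so it suffices to substitute the explicit matrix \(S_\delta\) and \(\A(z)=(-z_2,0)\) and compare the two components; one finds \(\A+S_\delta\A(S_\delta\,\cdot\,)=-\nabla\varphi\), i.e.\ \(\varphi\) is designed precisely so that the extra gradient field created when the linear gauge \(\A\) is reflected by the orientation-reversing isometry \(S_\delta\) is absorbed by the gauge change \(e^{-ib\varphi}\) (equivalently, \(\operatorname{curl}\bigl(S_\delta\A(S_\delta\,\cdot\,)\bigr)=-1=\operatorname{curl}(-\A)\), and \eqref{keyidplan} is an exact primitive of this relation). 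The main obstacle I anticipate is purely bookkeeping — correctly tracking the three sign changes (gauge, conjugation, reflection) and verifying \eqref{keyidplan} — with no structural difficulty beyond that. Concerning regularity, the hypothesis on \(u_+\), read in the natural way, gives \(u_+\in H^1_{\mathrm{loc}}(T^+)\) (local boundedness of \(\sigma_{\mathfrak{b},\delta}\A\) is used here), whence \(u_-\) — a smooth phase times the reflection of \(u_+\) — lies in \(H^1_{\mathrm{loc}}(T^-)\); thus every product- and chain-rule step above holds almost everywhere and the final change of variables is legitimate.
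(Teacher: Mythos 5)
Your proposal is correct and follows essentially the same route as the paper: you prove the pointwise identity via the gauge relation \(S_\delta\mathcal{A}(S_\delta y)+\nabla\varphi(y)=-\mathcal{A}(y)\) (the paper writes this as \(\tilde{\mathcal{A}}+\nabla\varphi=(y_2,0)\) after the substitution \(y=S_\delta x\)), combined with complex conjugation and the orthogonality of \(S_\delta\). The only presentational difference is that you establish the pointwise equality of integrands first and then integrate, whereas the paper changes variables in the integral first and then applies gauge invariance; the computations are the same.
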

	\begin{remark}
		Note that by construction  \(\varphi(S_{\delta}(y)) = \varphi(y)\) since \(S_{\delta}^T = S_{\delta}\) and \(S_{\delta}^2 = \text{Id}\). 
	\end{remark}
	\begin{proof}[Proof of Lemma \ref{Lemma 1}]
		We follow the proof in \cite[Lemma 3.1]{Almostflat}. Since \(\mathcal{A}=(-x_2, 0)\), then using the change of variables \(y= S_{\delta} x\),
		\begin{multline*}
			\int_{T^-} |(\nabla -i\sigma_{\mathfrak{b},\delta} \mathcal{A})u_-|^2 \,  dx \\ 
			=  \int_{T^+} |S_{\delta}(S_{\delta}\nabla_y -i\underbrace{\sigma_{\mathfrak{b},\delta} (S_{\delta}^{-1}(y))}_{=\sigma_{\mathfrak{b},\delta}(y)} \mathcal{A}(S_{\delta}^{-1}(y)))u_-(S_{\delta}^{-1}(y))|^2 \, dy \\
			=\int_{T^+} |(\nabla_y-i\sigma_{\mathfrak{b}, \delta}(y)\Tilde{\mathcal{A}}(y))u_-(S_{\delta}(y))|^2 dy,         
		\end{multline*}
		where we have used \(S_{\delta} = S_{\delta}^{-1} = S_{\delta}^{\ast}\) and
		\[\Tilde{\mathcal{A}}(y) = S_{\delta} \mathcal{A}S_{\delta}(y) = \left(\begin{array}{c}
			-\frac{y_1}{2}\sin (2\delta) +y_2 \cos^2{\delta}\\
			-y_1 \sin^2{\delta}   + \frac{y_2}{2}\sin(2\delta)
		\end{array}\right).  \]
		Note that \(\text{curl}(\Tilde{\mathcal{A}})= -1\) and \(\Tilde{\mathcal{A}}+ \nabla \varphi =(y_2, 0)\). Thus, by Gauge invariance, 
		\begin{equation*}
			\begin{split}
				\int_{T_{2}^-} |(\nabla -i b_2\mathcal{A})u_-|^2 dx & =  \int_{T_{2}^+} \bigg |\left(\nabla_y -i \left(\begin{array}{c}
					b_2 y_2  \\
					0 
				\end{array}\right)\right) e^{ib_2\varphi(y)} u_-(S_{\delta} (y))\bigg|^2 \, dy \\
				&= \int_{T_{2}^+} \bigg |\left(\nabla_y -i \left(\begin{array}{c}
					-b_2y_2  \\
					0 
				\end{array}\right)\right) e^{-ib_2\varphi(y)} \overline{u_-(S_{\delta} (y)})\bigg|^2 \, dy  \\
				& =  \int_{T_{2}^+} |(\nabla_y -ib_2 \mathcal{A}) u_+(y)|^2 \, dy ,
			\end{split}
		\end{equation*}
		since \(\overline{u_- (x)} =e^{-ib_2\varphi(S_{\delta}x)}u_+(S_{\delta}(x)) \). Similarly, 
		\[ \int_{T_{1}^-} |(\nabla -i b_1 \mathcal{A})u_-|^2 \,  dx  = \int_{T_{1}^+} |(\nabla_y -ib_1 \mathcal{A}) u_+(y)|^2 \, dy.\qedhere\]
	\end{proof}
	We will next construct a trial state that fits into Lemma \ref{Lemma 1}. To simplify computations it will be helpful if our trial state is supported in the sets introduced in Definition \ref{Set partition}. We therefore introduce the following cut-off:
	\begin{defin}\label{Definition of phi_l}
		Let \(\chi\in C_c^{\infty}(\R)\) be a cut-off function such that \(0\leq \chi\leq 1\), \(\text{supp } \chi\subset [-1, 1]\), \(\chi =1\) on \([-\frac{1}{2}, \frac{1}{2}]\), and consider
		\begin{equation}\label{Phi_b,l definition}
			\phi_{\mathfrak{b}, \ell }(t) := \chi\paren*{\frac{t}{\ell}}\phi_{\mathfrak{b}}(t),
		\end{equation}
		where \(\phi_{\mathfrak{b}}\) denotes the eigenfunction introduced in Remark \ref{Remark b cases}. 
	\end{defin}
	Note that, for large \(\ell\), due to the exponential decay of \(\phi_{\mathfrak{b}}\) and its derivative, 
	\begin{align}\label{Decay of phi_b,l}
		\begin{split}
			\int_{-\infty}^{+\infty} \abs{\phi_{\mathfrak{b}, \ell}}^2\, dt &= 1 +\mathcal{O}(\ell^{-\infty})\\
			q_{\mathfrak{b}}[\xi_{\mathfrak{b}}]( \phi_{\mathfrak{b}, \ell}) &=  q_{\mathfrak{b}}[\xi_{\mathfrak{b}}]( \phi_{\mathfrak{b}})+\mathcal{O}(\ell^{-\infty})\\
			\int_{-\infty}^{+\infty} \frac{1}{\sigma_{\mathfrak{b}}(t)} \paren{\xi_{\mathfrak{b},\ell} + \sigma_{\mathfrak{b}} (t)t}^n \abs{\phi_{\mathfrak{b},\ell}(t)}^2 \, dt & = M_n(\mathfrak{b}) +\mathcal{O}(\ell^{-\infty}), 
		\end{split}
	\end{align}
	where \(\mathcal{O}(\ell^{-\infty})\) stands for \(\mathcal{O}(l^{-N})\) for all \(N\in \N\) as \(\ell\rightarrow +\infty\). Since \(\ell = \delta^{-\frac{1}{2}}\) by \eqref{Definition gamma, delta, l}, we will write \(\mathcal{O}(\delta^{\infty})\) instead of \(\mathcal{O}(\ell^{-\infty})\).  The main idea is to construct a trial state of the form \(\phi_{\mathfrak{b},l} (x_2) e^{i \xi_{\mathfrak{b},l} x_1}\) in \(T^+ \) and use \eqref{u_- definition} in \(T^-\). To make sure that our trial state is in the domain of the quadratic form \(\mathcal{Q}_{\mathfrak{b},\delta}\) we will introduce a cut-off function in the \(x_1\)-direction and consider an appropriate phase in the transition areas \(V^{\pm}\).
	
	For \(x\in \R^2\), let 
	\begin{equation}\label{Trialstate}
		\Psi^{tr}(x) := \eta(x) \Psi(x),
	\end{equation}  
	where \(0\leq \abs{\eta(x)}\leq 1\), 
	\begin{equation}\label{Eta definition}
		\eta(x) = \begin{cases}
			\eta_+(x_1) & \text{ if } x\in T^+\\
			\eta_+(-x_1\cos{\delta} - x_2\sin{\delta}) & \text{ if } x\in T^-
		\end{cases},
	\end{equation}
	and \(\eta_+\in H^1(\R)\) is equal to \(1\) in a small neighborhood of \(0\). Note that \(\eta(S_{\delta}x) = \eta(x)\) for \(x\in T^{\pm}\). The other term is given by
	\[\Psi(x)= \begin{cases}
		\phi_{\mathfrak{b},\ell} (x_2) e^{i \xi_{\mathfrak{b}} x_1} & \text{if }(x_1, x_2) \in T^+\\
		\phi_{\mathfrak{b},\ell} (x_2) e^{i \alpha } & \text{if }(x_1, x_2) \in V^+\\
		\phi_{\mathfrak{b},\ell} (-\sin{\delta}x_1 + \cos{\delta}x_2) e^{i \alpha} & \text{if }(x_1, x_2) \in V^-\\
		\phi_{\mathfrak{b},\ell} (-\sin{\delta}x_1 + \cos{\delta}x_2) e^{-i\xi_{\mathfrak{b}} (x_1\sin{\delta} +x_2\cos{\delta}) -ib_1  \varphi(x)} & \text{if }(x_1, x_2) \in T_{1}^-\\
		\phi_{\mathfrak{b},\ell} (-\sin{\delta}x_1 + \cos{\delta}x_2) e^{-i\xi_{\mathfrak{b}}(x_1\sin{\delta} +x_2\cos{\delta}) -ib_2 \varphi(x)} & \text{if }(x_1, x_2)\in T_{2}^-\\
		0 & \text{elsewhere} ,
	\end{cases}\]
	where \(\alpha\) is defined as follows
	
	\begin{enumerate}
		\item If \( (x_1, x_2)\in V^-\), then \(\alpha\) in polar coordinates is given by
		\begin{equation*}
			\alpha(r, \theta) := d_1  r^2 - h_1(\theta) (c_1 r -d_1 r^2),
		\end{equation*}
		with \(c_1 = \xi_{\mathfrak{b}} \sin(\frac{\gamma+\delta }{2}) \), \(d_1=  \frac{a}{4}\sin{\delta}\cos(\gamma)\), \(h_1(\theta) = -\frac{2}{\gamma}\theta + \frac{3\pi+\delta+\gamma}{2}\) for \(\theta \in \big (\frac{3\pi+\delta-\gamma}{2},  \frac{3\pi+\delta+\gamma}{2}\big )\).  
		\item If \( (x_1, x_2)\in V^+\), then \(\alpha\) in polar coordinates is given by
		\begin{equation*}
			\alpha(r, \theta) := d_2  r^2 - h_2 (\theta) (c_2 r -d_2 r^2),
		\end{equation*}
		with \(c_2= \xi_{\mathfrak{b}} \sin(\frac{\gamma-\delta }{2}) \), \(d_2=  \frac{b}{4}\sin{\delta}\cos(\gamma)\) and \(h_2(\theta) = \frac{2}{\gamma}\theta  + \frac{\pi+\delta- \gamma}{2}\) for \(\theta\in \big (\frac{\pi+\delta-\gamma}{2}, \frac{\pi+\delta+\gamma}{2}\big)\). 
	\end{enumerate}

	\begin{remark}
		The choice of \(\alpha\) is done to guarantee the continuity of \(\Psi\) (in the same spirit as in \cite[Section 3.3]{Almostflat}). 
		
		The precise expression of \(\eta\) will be given later.
	\end{remark}
	
	\section{Estimate of the energy}\label{Estimate of the energy}
	To prove Theorem \ref{Main Theorem} it suffices to show that \(\mathcal{Q}_{\mathfrak{b}, \delta}(\Psi^{tr}) -  \beta_{\mathfrak{b}} \norm{\Psi^{tr}}^2_{L^2(\R^2)}\) is negative if \(\delta\) is small enough. We start by estimating \(\mathcal{Q}_{\mathfrak{b}, \delta}(\Psi^{tr}) \) in the different regions introduced in Definition \ref{Set partition}.  
	\subsection{Energy in \(V^+\)}
	To make things more digestible, we will introduce every result as a series of lemmas.
	\begin{lemma}\label{Lemma 3} It holds that
		\begin{equation}\label{Eq 4.2}
			\begin{split}
				\int_{V^+_1} |(\nabla -ib_1\mathcal{A})\Psi |^2 dx  =&  \int_{\frac{3\pi+\delta }{2}}^{\frac{3\pi+ \delta+\gamma}{2}} \int_0^{\frac{-\ell}{\sin{\theta}}}| \phi'_{\mathfrak{b},\ell}(r\sin{\theta})|^2 r\, drd\theta \\
				& + \int_{\frac{3\pi+\delta }{2}}^{\frac{3\pi+ \delta+\gamma}{2}} \int_0^{\frac{-\ell}{\sin{\theta}}}F_1(r, \theta, h_1) |\phi_{\mathfrak{b},\ell}(r\sin{\theta})|^2r\, drd\theta , 
			\end{split}
		\end{equation} 
		and
		\begin{equation}\label{Eq 4.1}
			\begin{split}
				\int_{V^+_{2}} |(\nabla -ib_2\mathcal{A})\Psi |^2 dx   =&\int_{\frac{\pi+\delta -\gamma}{2}}^{\frac{\pi+ \delta}{2}}  \int_0^{\frac{\ell}{\sin{\theta}}}  |\phi'_{\mathfrak{b},\ell}(r\sin{\theta})|^2 \, r\, dr d\theta \\
				& +\int_{\frac{\pi+\delta -\gamma}{2}}^{\frac{\pi+ \delta}{2}} \int_0^{\frac{\ell}{\sin{\theta}}} F_2(r, \theta, h_2) |\phi_{\mathfrak{b},\ell}(r\sin{\theta})|^2 \, r\, dr d\theta ,
			\end{split}
		\end{equation} 
		where, for \(j\in \{1,2\}\), 
		\begin{equation}\label{Fj definition}
			\begin{split}
				F_j(r, \theta, h_j) &= \bigg( \frac{1}{2} b_j r(-1+\cos(2\theta)) -h'_j(\theta) (c_j-d_j r) \bigg )^2 \\
				&\quad + \bigg (\frac{r}{2}b_j\sin(2\theta) +2d_j r - h_j(\theta) (c_j -2d_jr)\bigg )^2.
			\end{split}
		\end{equation}
	\end{lemma}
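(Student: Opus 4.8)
The plan is to reduce the statement to a pointwise identity on each sector $V^+_j$ followed by a change to polar coordinates. On $V^+_j$ one has $\Psi(x) = \phi_{\mathfrak{b},\ell}(x_2)\,e^{i\alpha(x)}$ with $\alpha(r,\theta) = d_j r^2 - h_j(\theta)(c_j r - d_j r^2)$, and, by Remark \ref{Remark b cases} and Definition \ref{Definition of phi_l}, $\phi_{\mathfrak{b},\ell}$ may be taken real-valued. Since $\mathcal{A} = (-x_2,0)$, a direct differentiation gives
\[
(\nabla - i b_j\mathcal{A})\Psi = e^{i\alpha}\Big[\big(0,\phi'_{\mathfrak{b},\ell}(x_2)\big) + i\,\phi_{\mathfrak{b},\ell}(x_2)\big(\nabla\alpha + b_j x_2\, e_1\big)\Big],\qquad e_1:=(1,0).
\]
Because $\phi_{\mathfrak{b},\ell}$, $\phi'_{\mathfrak{b},\ell}$, $x_2$ and $\nabla\alpha$ are all real-valued, the cross term in $|(\nabla - i b_j\mathcal{A})\Psi|^2$ is purely imaginary and hence vanishes, so
\[
|(\nabla - i b_j\mathcal{A})\Psi|^2 = |\phi'_{\mathfrak{b},\ell}(x_2)|^2 + |\phi_{\mathfrak{b},\ell}(x_2)|^2\,\big|\nabla\alpha + b_j x_2\, e_1\big|^2 .
\]

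Next I would pass to polar coordinates $x = (r\cos\theta, r\sin\theta)$, so that $x_2 = r\sin\theta$ and $dx = r\,dr\,d\theta$; the first term above immediately produces the leading integrals in \eqref{Eq 4.2} and \eqref{Eq 4.1}. For the second term the key point is that $|\nabla\alpha + b_j x_2 e_1|^2$ is the square of the Euclidean norm of a vector field, so it can be evaluated in the orthonormal polar frame $e_r = (\cos\theta,\sin\theta)$, $e_\theta = (-\sin\theta,\cos\theta)$. Writing $\nabla\alpha = \partial_r\alpha\, e_r + r^{-1}\partial_\theta\alpha\, e_\theta$ and $e_1 = \cos\theta\, e_r - \sin\theta\, e_\theta$, and using $x_2 = r\sin\theta$ together with $\sin\theta\cos\theta = \frac{1}{2}\sin(2\theta)$ and $-\sin^2\theta = \frac{1}{2}(\cos(2\theta)-1)$, one finds
\[
\big|\nabla\alpha + b_j x_2 e_1\big|^2 = \Big(\tfrac{1}{2} b_j r(-1+\cos(2\theta)) + r^{-1}\partial_\theta\alpha\Big)^2 + \Big(\tfrac{r}{2} b_j\sin(2\theta) + \partial_r\alpha\Big)^2 .
\]
Substituting $\partial_r\alpha = 2d_j r - h_j(\theta)(c_j - 2d_j r)$ and $r^{-1}\partial_\theta\alpha = -h'_j(\theta)(c_j - d_j r)$, read off from the explicit form of $\alpha$, reproduces exactly $F_j(r,\theta,h_j)$ as defined in \eqref{Fj definition}.

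Finally I would record the radial range. On $V^+_2$ the angular variable runs over $\big(\frac{\pi+\delta-\gamma}{2}, \frac{\pi+\delta}{2}\big)$, where $\sin\theta>0$, so by Definition \ref{Set partition} the sector is $\{0<r<\ell/\sin\theta\}$, whereas on $V^+_1$ the range is $\big(\frac{3\pi+\delta}{2}, \frac{3\pi+\delta+\gamma}{2}\big)$, where $\sin\theta<0$, giving $\{0<r<-\ell/\sin\theta\}$; in both cases this is consistent with $\phi_{\mathfrak{b},\ell}(r\sin\theta)$ vanishing once $r$ exceeds $r_\ast = \ell/|\sin\theta|$, so restricting to the sector loses nothing. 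Integrating the pointwise identity against $r\,dr\,d\theta$ over these sectors then gives \eqref{Eq 4.2} and \eqref{Eq 4.1}. The computation is elementary; the only points requiring care are the bookkeeping in the polar-frame decomposition of $\nabla\alpha + b_j x_2 e_1$ and the sign of $\sin\theta$ on each sector, which fixes the orientation of the radial integration limits.
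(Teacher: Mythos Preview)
Your argument is correct and follows essentially the same route as the paper: a direct computation in polar coordinates, using that $\phi_{\mathfrak{b},\ell}$ and $\alpha$ are real-valued so the cross term drops out. The only cosmetic difference is that the paper first applies the gauge change $\zeta(x_1,x_2)=\tfrac{1}{2}x_1x_2$ to pass from $\mathcal{A}=(-x_2,0)$ to $\mathcal{A}_0=\tfrac{r}{2}\hat e_\theta$ before computing $|\partial_r\tilde\Psi|^2$ and $r^{-2}|(\partial_\theta-ib_j\tfrac{r^2}{2})\tilde\Psi|^2$, whereas you decompose $\nabla\alpha+b_jx_2e_1$ directly in the polar frame; the two computations are line-by-line equivalent.
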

	\begin{proof}
		The proof is by a direct computation in polar coordinates. Note that
		\[\nabla = \partial_r \hat{e}_r +\frac{1}{r}\partial_{\theta} \hat{e}_{\theta} \qquad\text{ and } \qquad \mathcal{A}_0 = \frac{r}{2}\hat{e}_{\theta},\]
		where \(\mathcal{A}_0 = \frac{1}{2}(-x_2, x_1)\). Since \(\sigma_{\mathfrak{b}, \delta} \mathcal{A}+  \sigma_{\mathfrak{b}, \delta} \nabla \zeta = \sigma_{\mathfrak{b}, \delta} \mathcal{A}_0\) with \(\zeta(x_1, x_2) =x_1x_2/2\), by Gauge invariance we can consider
		\[\Tilde{\Psi}(x_1, x_2) = e^{i\sigma_{\mathfrak{b}, \delta} (x_1,x_2) \frac{x_1x_2}{2}}\Psi(x_1, x_2) =\phi_{\mathfrak{b},\ell}(r\sin{\theta}) e^{i(\alpha(r, \theta) + \frac{1}{4}\sigma(x_1, x_2) r^2 \sin(2\theta))}.\]
		Having this in mind, we can compute
		\begin{equation}\label{First quantity}
			|\partial_r \Tilde{\Psi}|^2 =  \sin^2{\theta} | \phi'_{\mathfrak{b}, \ell}(r\sin{\theta})|^2+\left(\sigma_{\mathfrak{b}, \delta}\frac{r}{2}\sin(2\theta) +\partial_r\alpha(r, \theta)\right)^2 |\phi_{\mathfrak{b},\ell}(r\sin{\theta})|^2, 
		\end{equation}
		\begin{equation}\label{Second quantity}
			\begin{split}
				\bigg |\bigg(\partial_{\theta}-i\sigma_{\mathfrak{b}, \delta} \frac{r^2}{2}\bigg) \Tilde{\Psi}\bigg |^2 & =  r^2\cos^2{\theta} | \phi'_{\mathfrak{b},\ell }(r\sin{\theta})|^2 \\
				&+(\partial_{\theta}\alpha(r,\theta) +\frac{1}{2}r^2 \sigma_{\mathfrak{b}, \delta} (-1 + \cos(2\theta) ))^2 |\phi_{\mathfrak{b},\ell}(r\sin{\theta})|^2.
			\end{split}
		\end{equation}
		Note that we have used the fact that \(\phi_{\mathfrak{b},\ell}\) and \(\alpha\) are real-valued. For \((x_1, x_2) \in V^+_{2}\), we have in polar coordinates
		\[\partial_r\alpha(r,\theta) =2d_2 r - h_2(\theta) (c_2 -2d_2 r) , \]
		and
		\[\partial_{\theta}\alpha(r,\theta) = -h'_2(\theta)(c_2r-d_2r^2). \]
		Taking this into account, by \eqref{First quantity} and \eqref{Second quantity} we have
		\[\begin{split}
			\int_{V^+_{2}} |(\nabla -ib_2\mathcal{A})\Psi |^2 dx  = & \int_{\frac{\pi+\delta -\gamma}{2}}^{\frac{\pi+ \delta}{2}} \int_0^{\frac{\ell}{\sin{\theta}}}  \left(|\partial_r \Tilde{\Psi}|^2 +r^{-2}\bigabs{(\partial_{\theta}-ib_2\frac{r^2}{2}) \Tilde{\Psi}}^2 \right) r\, drd\theta \\
			=&\int_{\frac{\pi+\delta -\gamma}{2}}^{\frac{\pi+ \delta}{2}}  |\phi'_{\mathfrak{b},\ell}(r\sin{\theta})|^2 r\, dr d\theta \\
			&+\int_{\frac{\pi+\delta -\gamma}{2}}^{\frac{\pi+ \delta}{2}} \int_0^{\frac{\ell}{\sin{\theta}}} F_2(r, \theta, h_2) |\phi_{\mathfrak{b},\ell}(r\sin{\theta})|^2 r\, dr d\theta .
		\end{split}\]
		For \(V_1^+\), the proof is analogous.
	\end{proof}
	\begin{lemma}\label{Lemma 4} We have, as \(\delta \to 0^+\),
		\begin{equation}\label{Eq 4.3}
			\int_{V^+_{2}} |(\nabla  -ib_2 \mathcal{A})\Psi |^2 dx  = \frac{1}{2}\delta^{\frac{1}{2}} J_{b_2} + \mathcal{O}(\delta^{\frac{3}{2}}),
		\end{equation}
		with 
		\begin{equation}
			\begin{split}\label{J1}
				J_{b_2} :=& \int_0^{+\infty}\left(| \phi'_{\mathfrak{b}}(t)|^2  +(b_2t+\xi_{\mathfrak{b}})^2 |\phi_{\mathfrak{b},\ell} (t)|^2 \right) t\, dt\\
				&  -\delta^{\frac{1}{2}} \int_0^{+\infty} (b_2t+\xi_{\mathfrak{b}}) (b_2t+2\xi_{\mathfrak{b}}) |\phi_{\mathfrak{b}} (t)|^2\, t\, dt . 
			\end{split}
		\end{equation}
	\end{lemma}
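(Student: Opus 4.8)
The plan is to feed the exact identity \eqref{Eq 4.1} of Lemma~\ref{Lemma 3} into a radial substitution and then Taylor expand in $\delta$, using $\gamma=\delta^{1/2}$, $\ell=\delta^{-1/2}$, and the fact that the angular window $\theta\in\big(\frac{\pi+\delta-\gamma}{2},\frac{\pi+\delta}{2}\big)$ has length $\frac{\gamma}{2}=\frac12\delta^{1/2}$ and lies at distance $\mathcal{O}(\delta^{1/2})$ from $\theta=\frac{\pi}{2}$. First I substitute $t=r\sin\theta$ in the inner integrals of \eqref{Eq 4.1}, so that $r\,dr=\frac{t}{\sin^2\theta}\,dt$ and $0<r<\frac{\ell}{\sin\theta}$ becomes $0<t<\ell$. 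Since $\phi_{\mathfrak{b},\ell}=\chi(\cdot/\ell)\phi_{\mathfrak{b}}$ and $\phi_{\mathfrak{b}}$, $\phi'_{\mathfrak{b}}$ decay exponentially, \eqref{Decay of phi_b,l} lets me extend each $t$-integral to $(0,+\infty)$, replace $\phi'_{\mathfrak{b},\ell}$ by $\phi'_{\mathfrak{b}}$ throughout, and replace $\phi_{\mathfrak{b},\ell}$ by $\phi_{\mathfrak{b}}$ in any term already carrying a factor $\delta^{1/2}$, all at the cost of $\mathcal{O}(\delta^{\infty})$.

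Next I expand the weights with $\theta=\frac{\pi}{2}+\psi$, $\psi=\mathcal{O}(\delta^{1/2})$ on the window. Here $\frac{1}{\sin^2\theta}=1+\mathcal{O}(\delta)$, and after $t=r\sin\theta$ one has the clean identities $\frac12 b_2 r(-1+\cos2\theta)=-b_2 t\cos\psi=-b_2 t+\mathcal{O}(\delta)$ and $\frac{r}{2}b_2\sin2\theta=-b_2 t\sin\psi=\mathcal{O}(\delta^{1/2})$. The constants satisfy $c_2=\xi_{\mathfrak{b}}\sin\big(\frac{\gamma-\delta}{2}\big)=\frac{\xi_{\mathfrak{b}}}{2}(\delta^{1/2}-\delta)+\mathcal{O}(\delta^{3/2})=\mathcal{O}(\delta^{1/2})$ and $d_2=\frac{b_2}{4}\sin\delta\cos\gamma=\frac{b_2}{4}\delta+\mathcal{O}(\delta^2)=\mathcal{O}(\delta)$, while $h_2$ is bounded on the window and $h_2'=\frac{2}{\gamma}=2\delta^{-1/2}$. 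The only delicate product is the large $h_2'$ against the small $c_2-d_2 r$; expanding $c_2$ and $d_2 r=d_2 t/\cos\psi$ one order beyond their naive leading behaviour gives
\[
h_2'(c_2-d_2 r)=\xi_{\mathfrak{b}}-\delta^{1/2}\Big(\xi_{\mathfrak{b}}+\tfrac{b_2 t}{2}\Big)+\mathcal{O}(\delta),
\]
with the error uniform in $\theta$, every power of $r$ produced along the way being controlled by the exponential weight $|\phi_{\mathfrak{b},\ell}(r\sin\theta)|^2$ in the integral. Inserting this into \eqref{Fj definition}, the first square becomes $\big(b_2 t+\xi_{\mathfrak{b}}-\tfrac{\delta^{1/2}}{2}(b_2 t+2\xi_{\mathfrak{b}})\big)^2+\mathcal{O}(\delta)=(b_2 t+\xi_{\mathfrak{b}})^2-\delta^{1/2}(b_2 t+\xi_{\mathfrak{b}})(b_2 t+2\xi_{\mathfrak{b}})+\mathcal{O}(\delta)$, while the second square is a sum of $\mathcal{O}(\delta^{1/2})$ terms, hence $\mathcal{O}(\delta)$. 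Therefore $F_2(t/\sin\theta,\theta,h_2)=(b_2 t+\xi_{\mathfrak{b}})^2-\delta^{1/2}(b_2 t+\xi_{\mathfrak{b}})(b_2 t+2\xi_{\mathfrak{b}})+\mathcal{O}(\delta)$ on the window.

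Finally I integrate. After the substitution and these expansions the integrand in \eqref{Eq 4.1} is $\theta$-independent up to $\mathcal{O}(\delta)$, so the angular integral $\int_{(\pi+\delta-\gamma)/2}^{(\pi+\delta)/2}d\theta=\frac{\gamma}{2}=\frac12\delta^{1/2}$ factors out of the leading terms: the $|\phi'_{\mathfrak{b},\ell}|^2$ contribution together with the $(b_2 t+\xi_{\mathfrak{b}})^2$ part of $F_2$ gives $\frac12\delta^{1/2}\int_0^{+\infty}\big(|\phi'_{\mathfrak{b}}(t)|^2+(b_2 t+\xi_{\mathfrak{b}})^2|\phi_{\mathfrak{b},\ell}(t)|^2\big)t\,dt$, while the $\delta^{1/2}$-correction of $F_2$ contributes $-\frac12\delta^{1/2}\cdot\delta^{1/2}\int_0^{+\infty}(b_2 t+\xi_{\mathfrak{b}})(b_2 t+2\xi_{\mathfrak{b}})|\phi_{\mathfrak{b}}(t)|^2 t\,dt$; their sum is exactly $\frac12\delta^{1/2}J_{b_2}$ with $J_{b_2}$ as in \eqref{J1}. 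Every discarded term — the $\mathcal{O}(\delta)$ remainders in the integrand, the $\mathcal{O}(\delta)$ defect of $\frac{1}{\sin^2\theta}$, and the $\mathcal{O}(\delta^{\infty})$ coming from the cut-off — is multiplied by the angular length $\frac{\gamma}{2}=\mathcal{O}(\delta^{1/2})$, hence is $\mathcal{O}(\delta^{3/2})$; this yields \eqref{Eq 4.3}.

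The main obstacle is entirely the bookkeeping around the large factor $h_2'=2/\gamma$: one must carry $c_2$ and $d_2 r$ to the order at which $h_2'(c_2-d_2 r)$ is pinned down modulo $\mathcal{O}(\delta)$, and must keep the polynomial-in-$r$ terms arising from $d_2 r$ and from $r\,dr$ controlled by the exponential decay of $\phi_{\mathfrak{b}}$ so that all remainders are uniform in $\theta$. A secondary check is that the signs in \eqref{Fj definition}, together with $b_2=b$, make the $\delta^{1/2}$-correction assemble exactly into $(b_2 t+\xi_{\mathfrak{b}})(b_2 t+2\xi_{\mathfrak{b}})$; the remaining computations are routine.
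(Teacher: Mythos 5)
Your proof is correct and follows essentially the same route as the paper's: the substitution $t=r\sin\theta$, the Taylor expansion of $\sin\theta$, $\sin 2\theta$, $c_2$, $d_2$ around $\theta=\pi/2$, the observation that the second square in $F_2$ contributes only $\mathcal{O}(\delta)$, the careful expansion of the product $h_2'(c_2-d_2 r)$ to order $\delta$, and the final factorisation of the angular length $\gamma/2=\frac12\delta^{1/2}$. Your rewriting of $\xi_{\mathfrak{b}}(1-\delta/\gamma)-\frac{b_2\delta}{2\gamma}t$ as $\xi_{\mathfrak{b}}-\delta^{1/2}(\xi_{\mathfrak{b}}+\frac{b_2 t}{2})$ is just the explicit form of the same quantity the paper keeps in terms of $\gamma$.
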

	\begin{proof}
		We are considering \(\delta\) to be a small positive quantity, and we know that \(\gamma = \delta^{\frac{1}{2}}\) by \eqref{Definition gamma, delta, l} . If \(\theta\in \paren*{\frac{\pi+\delta -\gamma}{2}, \frac{\pi+\delta}{2} }\), we have
		\begin{equation*}
			\sin{\theta} = 1+\mathcal{O}(\delta) , \quad  \sin{2\theta} = \mathcal{O}(\delta^{\frac{1}{2}} ), \quad \frac{1}{\sin^2{\theta}} =  1+\mathcal{O}(\delta ),
		\end{equation*}
		\begin{equation}\label{c_2 and d_2 expansion}
			c_2 =  \xi_{\mathfrak{b}} \left(\frac{\gamma-\delta }{2}\right)+ \mathcal{O}(\delta^{\frac{3}{2}}),\, d_2 = \frac{b_2  \delta}{4}+\mathcal{O}(\delta^{\frac{3}{2}}),
		\end{equation}
		and by construction \(h'_2(\theta) = \frac{2}{\gamma}\). Let
		\[\int_{V^+_{2}} |(\nabla -ib_2 \mathcal{A})\Psi |^2 dx  = I_1 + I_2,\]
		where 
		\[I_1 = \int_{\frac{\pi+\delta -\gamma}{2}}^{\frac{\pi+ \delta}{2}} \int_0^{\frac{\ell}{\sin{\theta}}}| \phi'_{\mathfrak{b},\ell}(r\sin{\theta})|^2  r\, drd\theta\]
		and 
		\[I_2 = \int_{\frac{\pi+\delta -\gamma}{2}}^{\frac{\pi+ \delta}{2}} \int_0^{\frac{\ell}{\sin{\theta}}}F_{2 }(r, \theta, h_2) |\phi_{\mathfrak{b},\ell}(r\sin{\theta})|^2 r\, drd\theta.\]
		\(\bullet \) \textbf{\(I_1\):} Introducing the change of variable \(t = r\sin{\theta}\), \(dt =  \sin{\theta}\,dr\), we obtain 
		\begin{equation}\label{I_1 definition}
			\begin{split}
				I_1 &= \int_{\frac{\pi+\delta -\gamma}{2}}^{\frac{\pi+ \delta}{2}} \frac{1}{\sin^2{\theta}} \int_0^{\ell}| \phi'_{\mathfrak{b},\ell}(t)|^2  t\, dt\, d\theta  \\
				&= \frac{\gamma}{2}\int_0^{+\infty}| \phi'_{\mathfrak{b}}(t)|^2  t\, dt + \mathcal{O}(\delta^{\infty}) + \mathcal{O}(\delta^{\frac{3}{2}}),
			\end{split}
		\end{equation}
		where we have used \eqref{Definition gamma, delta, l} and \eqref{Decay of phi_b,l}. The \(\mathcal{O}(\delta^{\frac{3}{2}})\) term is the first correction term in the \(\theta\) integral, and the \(\mathcal{O}(\delta^{\infty})\) follows from \eqref{Decay of phi_b,l} and \(\ell = \delta^{-\frac{1}{2}}\). 
		
		\(\bullet\) \textbf{\(I_2\):} Before starting with explicit computations of the integral, we look closely at the second summand in \(F_2(r, \theta, h_2)\). Recall that \(-1\leq h_2(\theta)\leq 0\) in \(V^+_{2}\) and, by \eqref{c_2 and d_2 expansion}, the quantities \(c_2 \) or \(d_2\) are of order \(\mathcal{O}(\delta^{\frac{1}{2}})\) and \(\mathcal{O}(\delta)\) respectively. This implies that 
		\begin{equation}\label{Eq 4.7}
			\bigg (b_2\frac{r}{2}\sin(2\theta) +2d_2 r - h_2(\theta) (c_2 -2d_2 r)\bigg )^2  =  \mathcal{O}(\delta )(r^2 +r +1).
		\end{equation} 
		This term together with the integral in \(\theta\) and properties of \(\phi_{\mathfrak{b}, \ell}\) stated in \eqref{Decay of phi_b,l} will give a term of order \(\mathcal{O}(\delta^{\frac{3}{2}})\). For the first term in \(F_2(r, \theta, h_2) \), after the same change of variable \(t=r\sin{\theta}\), we obtain
		\[\int_{\frac{\pi+\delta -\gamma}{2}}^{\frac{\pi+ \delta}{2}} \int_0^{\ell}\left(tb_2\sin{\theta} +h'_2(\theta)\bigg (c_2 -d_2\frac{t}{\sin{\theta}}\bigg ) \right)^2 |\phi_{\mathfrak{b},\ell}(t)|^2 \frac{1}{\sin^2{\theta}} t\, dt\,d\theta .\]
		Since \(\frac{1}{\sin^{n}{\theta}} = 1+\mathcal{O}(\delta)\) for \(n\in \{1, \ldots, 4\}\), \(h'_2(\theta) = \frac{2}{\gamma}\) and \eqref{c_2 and d_2 expansion} we can rewrite 
		\begin{equation*}
			I_2 =  \int_{\frac{\pi+\delta -\gamma}{2}}^{\frac{\pi+ \delta}{2}} \int_0^{\ell}  \paren*{b_2 t +   \xi_{\mathfrak{b}} \bigg(1-\frac{\delta}{\gamma}\bigg ) - \frac{b_2\delta}{2\gamma}t}^2 |\phi_{\mathfrak{b},\ell}(t)|^2 \, t \, dt d\theta  + \mathcal{O}(\delta^{\frac{3}{2}}), 
		\end{equation*}
		Observe that, using \eqref{Decay of phi_b,l}, we can rewrite 
		\begin{equation}\label{I_2 definition}
			I_2 =\frac{\gamma}{2}  \int_0^{\infty} \left(b_2t +\left(\xi_{\mathfrak{b}}\left(1-\frac{\delta}{\gamma}\right) - \frac{b_2\delta}{2\gamma} t\right)\right)^2 |\phi_{\mathfrak{b}}(t)|^2 t dt + \mathcal{O}(\delta^{\infty})+\mathcal{O}(\delta^{\frac{3}{2}}).
		\end{equation}
		Summing up \eqref{I_1 definition} and \eqref{I_2 definition} we obtain 
		\begin{equation*}
			\begin{split}
				\int_{V^+_{2}} |(\nabla -ib_2\mathcal{A})\Psi |^2 \,  dx  & = \frac{\gamma}{2}\int_0^{+\infty}| \phi'_{\mathfrak{b}}(t)|^2  t\, dt \\
				&+\frac{\gamma}{2}\int_0^{\infty} \left(b_2t +\left(\xi_{\mathfrak{b}}\left(1-\frac{\delta}{\gamma}\right) - \frac{b_2\delta}{2\gamma} t\right)\right)^2 |\phi_{\mathfrak{b}}(t)|^2 t\, dt  \\
				&+ \mathcal{O}(\delta^{\infty}) +  \mathcal{O}(\delta^{\frac{3}{2}}).
			\end{split}
		\end{equation*}
		Note that, using again \eqref{Decay of phi_b,l}
		\begin{equation*}
			\int_0^{+\infty} \left(b_2t +\left(\xi_{\mathfrak{b}}\left(1-\frac{\delta}{\gamma}\right) - \frac{b_2\delta}{2\gamma} t\right)\right)^2 |\phi_{\mathfrak{b}}(t)|^2 t \, dt
		\end{equation*}
		\[ =  \int_0^{+\infty} \paren*{(bt +\xi_{\mathfrak{b}})^2\abs{\phi_{\mathfrak{b}}(t)}^2  - \frac{\delta}{\gamma}(bt+\xi_{\mathfrak{b}})(bt +2\xi_{\mathfrak{b}})\abs{\phi_{\mathfrak{b}}(t)}^2} t \, dt + \mathcal{O}(\delta). \]
		Using again \eqref{Definition gamma, delta, l} we know that \(\gamma = \delta^{\frac{1}{2}}\) implying \eqref{Eq 4.3}. 
	\end{proof}
	\begin{lemma}\label{Lemma 5} It holds that, as \(\delta\rightarrow 0^+\), 
		\begin{equation}\label{Eq 4.5}
			\int_{V^+_1} |(\nabla -ib_1\mathcal{A})\Psi |^2 \, dx  = \frac{1}{2}\delta^{\frac{1}{2}} J_{b_1} + \mathcal{O}(\delta^{\frac{3}{2}}),
		\end{equation}
		with 
		\begin{equation}\label{Ja}
			\begin{split}
				J_{b_1} := &   -\int_{-\infty}^0\paren*{| \phi'_{\mathfrak{b}}(t)|^2 +(b_1t+\xi_{\mathfrak{b}})^2|\phi_{\mathfrak{b}}(t)|^2}t\, dt\\
				&-\delta^{\frac{1}{2}}\int_{-\infty}^0(b_1t + \xi_{\mathfrak{b}})(b_1t+2\xi_{\mathfrak{b}})| \phi_{\mathfrak{b}}(t)|^2 t\,  dt .
			\end{split}     
		\end{equation}
	\end{lemma}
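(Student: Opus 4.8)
The plan is to run the computation verbatim along the lines of the proof of Lemma~\ref{Lemma 4}, starting from the identity \eqref{Eq 4.2} in Lemma~\ref{Lemma 3} and writing
\[\int_{V^+_1}|(\nabla-ib_1\mathcal{A})\Psi|^2\,dx = I_1 + I_2,\]
where $I_1$ is the integral of $|\phi'_{\mathfrak{b},\ell}(r\sin\theta)|^2\,r$ and $I_2$ the integral of $F_1(r,\theta,h_1)\,|\phi_{\mathfrak{b},\ell}(r\sin\theta)|^2\,r$ over the sector $\theta\in\big(\tfrac{3\pi+\delta}{2},\tfrac{3\pi+\delta+\gamma}{2}\big)$. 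The only structural difference with Lemma~\ref{Lemma 4} is that this sector is centred at $\theta=\tfrac{3\pi}{2}$ instead of $\tfrac{\pi}{2}$, so on it one has $\sin\theta = -1+\mathcal{O}(\delta)$, $-1+\cos(2\theta) = -2\sin^2\theta = -2+\mathcal{O}(\delta)$, $\sin(2\theta) = \mathcal{O}(\delta^{1/2})$ and $1/\sin^n\theta = 1+\mathcal{O}(\delta)$ for $n\le 4$, while the constants still obey the analogue of \eqref{c_2 and d_2 expansion}, namely $c_1 = \xi_{\mathfrak{b}}\tfrac{\gamma+\delta}{2}+\mathcal{O}(\delta^{3/2})$, $d_1 = \tfrac{b_1\delta}{4}+\mathcal{O}(\delta^{3/2})$, together with $-1\le h_1\le 0$ and $|h_1'|=2/\gamma$.

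The key — and really the only — point to watch is the change of variable $t=r\sin\theta$ in the radial integral: since now $\sin\theta<0$, it is \emph{orientation reversing}, mapping $r\in(0,-\ell/\sin\theta)$ to $t\in(0,-\ell)$ with $r\,dr = \sin^{-2}\theta\,t\,dt$, so that
\[\int_0^{-\ell/\sin\theta} g(r\sin\theta)\,r\,dr = -\frac{1}{\sin^2\theta}\int_{-\ell}^{0} g(t)\,t\,dt .\]
Applying this to $I_1$ and using $1/\sin^2\theta = 1+\mathcal{O}(\delta)$, the angular measure $\gamma/2$, the decay estimates \eqref{Decay of phi_b,l} and $\ell=\delta^{-1/2}$ from \eqref{Definition gamma, delta, l} gives
\[I_1 = -\frac{\gamma}{2}\int_{-\infty}^{0}|\phi'_{\mathfrak{b}}(t)|^2\,t\,dt + \mathcal{O}(\delta^{3/2}) + \mathcal{O}(\delta^{\infty}).\]
For $I_2$ I would copy the two-step treatment of $I_2$ in Lemma~\ref{Lemma 4}: the second summand of $F_1$ in \eqref{Fj definition} is $\mathcal{O}(\delta)(r^2+r+1)$ (all of $\sin(2\theta)$, $c_1$, $d_1$ are $\mathcal{O}(\delta^{1/2})$ or smaller and $|h_1|\le1$), hence it contributes $\mathcal{O}(\delta^{3/2})$ after integrating in $r$ and $\theta$ via \eqref{Decay of phi_b,l}; and for the first summand the identity $-1+\cos(2\theta)=-2\sin^2\theta$ turns it, after $t=r\sin\theta$, into $\big(b_1 t\sin\theta + h_1'(\theta)(c_1-d_1 t/\sin\theta)\big)^2$, into which one substitutes $h_1'=-2/\gamma$ and the expansions of $c_1,d_1$. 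Carrying out the orientation-reversed radial integration and the $\theta$-integration then yields
\[I_2 = -\frac{\gamma}{2}\int_{-\infty}^{0}\Big(b_1 t + \xi_{\mathfrak{b}}\big(1+\tfrac{\delta}{\gamma}\big) + \tfrac{b_1\delta}{2\gamma}\,t\Big)^2|\phi_{\mathfrak{b}}(t)|^2\,t\,dt + \mathcal{O}(\delta^{3/2}) + \mathcal{O}(\delta^{\infty}).\]

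Finally I would add $I_1$ and $I_2$, expand the square using $\delta/\gamma=\delta^{1/2}$ and $(\delta/\gamma)^2=\delta$,
\[\Big(b_1 t + \xi_{\mathfrak{b}}\big(1+\tfrac{\delta}{\gamma}\big) + \tfrac{b_1\delta}{2\gamma}\,t\Big)^2 = (b_1 t+\xi_{\mathfrak{b}})^2 + \tfrac{\delta}{\gamma}(b_1 t+\xi_{\mathfrak{b}})(b_1 t+2\xi_{\mathfrak{b}}) + \mathcal{O}(\delta),\]
and substitute $\gamma=\delta^{1/2}$ from \eqref{Definition gamma, delta, l}; this collects the two contributions into $\tfrac12\delta^{1/2}J_{b_1}$ with $J_{b_1}$ as in \eqref{Ja}, the leftover being $\mathcal{O}(\delta^{3/2})$ (and the $\mathcal{O}(\delta^{\infty})$ terms being absorbed). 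The main obstacle is purely bookkeeping of signs: the orientation reversal contributes one global minus sign, and the shift $c_1=\xi_{\mathfrak{b}}\sin(\tfrac{\gamma+\delta}{2})$ together with $h_1'=-2/\gamma$ produces a $\big(1+\tfrac{\delta}{\gamma}\big)$ — rather than the $\big(1-\tfrac{\delta}{\gamma}\big)$ appearing for $V^+_2$ — inside the square; these two effects combine so that, after multiplication by the global $-\gamma/2$, the correction linear in $\delta^{1/2}$ still carries a minus sign in $J_{b_1}$, exactly as in \eqref{Ja}. One must of course also check that every remaining error genuinely lands at $\mathcal{O}(\delta^{3/2})$ — in particular that the corrections $1/\sin^n\theta-1 = \mathcal{O}(\delta)$ and the cross terms of $F_1$ only enter multiplied by the extra $\gamma=\delta^{1/2}$ coming from the angular measure — but this is routine.
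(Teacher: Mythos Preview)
Your proposal is correct and follows exactly the paper's approach, which simply says to repeat the proof of Lemma~\ref{Lemma 4} with the obvious sign changes coming from $\sin\theta\approx -1$. One small slip: your blanket claim that $1/\sin^n\theta = 1+\mathcal{O}(\delta)$ for $n\le 4$ is false for odd $n$ (there it is $-1+\mathcal{O}(\delta)$, which is precisely the point the paper singles out), but your detailed computation handles the odd powers correctly via the orientation-reversing substitution and the explicit $\sin\theta\approx -1$ in the $d_1 t/\sin\theta$ term, so the final result is right.
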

	\begin{proof}
		In \(V^+_1\), we have \(\theta\in \paren*{\frac{3\pi+\delta }{2}, \frac{3\pi+\delta+\gamma}{2}}\) and again recall that \(\gamma\) is a small positive quantity with  \(2\delta < \gamma \) by \eqref{Definition gamma, delta, l}. Hence, 
		\[ \sin{\theta} = -1+\mathcal{O}(\delta)\text{, } \sin{2\theta} = \mathcal{O}(\delta^{\frac{1}{2}}) \text{, }\frac{1}{\sin^2{\theta}} = 1+\mathcal{O}(\delta), \]
		\begin{equation}\label{c1 and d1 expansion}
			c_1 = \xi_{\mathfrak{b}} \frac{\gamma+\delta }{2} + \mathcal{O}(\delta^{\frac{3}{2}})  , \quad  d_1  = \frac{b_1\delta}{4} + \mathcal{O}(\delta^{\frac{3}{2}}),
		\end{equation}
		and \( h'_1(\theta) = - \frac{2}{\gamma}\) in this case. The rest of the proof follows the proof of Lemma \ref{Lemma 4} above. The only difference is that \(\frac{1}{\sin^n{\theta}} = -1+\mathcal{O}(\delta)\) if \(n\in \{1,3\}\).
	\end{proof}
	
	\subsection{Energy in \(V^- \)}
	This section is analogous to the previous one and we will get similar results. However, it is important to notice that this does not follow from Lemma \ref{Lemma 1}, because \(\Psi\) is not in the form satisfying \eqref{u_- definition}. 
	\begin{lemma}\label{Lemma 6}
		We have
		\begin{multline}\label{Eq 4.14}
			\int_{V^-_{1}} |(\nabla -ib_1\mathcal{A})\Psi |^2 dx =  \int_{\frac{3\pi+\delta-\gamma }{2}}^{\frac{3\pi+ \delta}{2}} \int_0^{\frac{-\ell}{\sin{\theta}}}| \phi'_{\mathfrak{b},\ell}(r\sin{\theta})|^2 \, r\, dr\, d\theta \\
			+ \int_{\frac{3\pi+\delta-\gamma }{2}}^{\frac{3\pi+ \delta}{2}} \int_0^{\frac{-\ell}{\sin{\theta}}} F_1(r, \theta, h_1) |\phi_{\mathfrak{b},\ell}(r\sin{\theta})|^2 \, r\, dr\, d\theta ,
		\end{multline} 
		and 
		\begin{multline}\label{Eq 4.13}
			\int_{V^-_{2}} |(\nabla -ib_2\mathcal{A})\Psi |^2 dx  =\int_{\frac{\pi+\delta }{2}}^{\frac{\pi+ \delta+\gamma }{2}} \int_0^{\frac{\ell}{\sin{\theta}}}| \phi'_{\mathfrak{b},\ell}(r\sin{\theta})|^2  r\, dr \, d\theta \\
			+\int_{\frac{\pi+\delta }{2}}^{\frac{\pi+ \delta+\gamma }{2}} \int_0^{\frac{\ell}{\sin{\theta}}}F_2(r, \theta, h_2) |\phi_{\mathfrak{b},\ell}(r\sin{\theta})|^2\,  r\, dr\, d\theta,
		\end{multline}
		where \(F_j(r, \theta, h_j)\), for \(j\in \{1,2\}\), were defined in \eqref{Fj definition}. 
	\end{lemma}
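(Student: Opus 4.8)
The plan is to obtain the two identities \eqref{Eq 4.14} and \eqref{Eq 4.13} by a direct computation in polar coordinates, rerunning the argument of the proof of Lemma~\ref{Lemma 3} almost verbatim. As the remark above stresses, on $V^-$ the trial state equals $\phi_{\mathfrak{b},\ell}(-\sin\delta\,x_1+\cos\delta\,x_2)\,e^{i\alpha}$ with the interpolating phase $\alpha$ rather than a phase of the form prescribed by \eqref{u_- definition}, so Lemma~\ref{Lemma 1} cannot be invoked and the integrals must be computed from scratch; the saving grace is that once the magnetic term is written in a rotationally adapted gauge, and because $\alpha$ has exactly the same functional shape on $V^-$ as on $V^+$, the computation reduces to the one already performed.

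I would start with the change of gauge $\zeta(x_1,x_2)=x_1x_2/2$ used in the proof of Lemma~\ref{Lemma 3}, which replaces $\sigma_{\mathfrak{b},\delta}\mathcal{A}$ by $\sigma_{\mathfrak{b},\delta}\mathcal{A}_0=\sigma_{\mathfrak{b},\delta}\frac{r}{2}\hat{e}_{\theta}$ (no radial component). Setting $\widetilde{\Psi}:=e^{i\sigma_{\mathfrak{b},\delta}x_1x_2/2}\Psi$, on $V^-_j$ this becomes $\widetilde{\Psi}=\phi_{\mathfrak{b},\ell}(-\sin\delta\,x_1+\cos\delta\,x_2)\exp\!\big(i(\alpha(r,\theta)+\frac14\sigma_{\mathfrak{b},\delta}r^2\sin2\theta)\big)$, with $\sigma_{\mathfrak{b},\delta}=b_1$ on $V^-_1$ and $\sigma_{\mathfrak{b},\delta}=b_2$ on $V^-_2$. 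Since $\phi_{\mathfrak{b},\ell}$, $\alpha$ and the quadratic correction phase are real-valued, every cross term in $|\partial_r\widetilde{\Psi}|^2$ and in $|(\partial_\theta-i\sigma_{\mathfrak{b},\delta}\frac{r^2}{2})\widetilde{\Psi}|^2$ vanishes, exactly as in \eqref{First quantity}--\eqref{Second quantity}. The radial and angular derivatives hit the argument $-\sin\delta\,x_1+\cos\delta\,x_2$, and a Pythagorean identity makes the two $|\phi'_{\mathfrak{b},\ell}|^2$-terms recombine into a single $|\phi'_{\mathfrak{b},\ell}|^2$ contribution; meanwhile, substituting $\partial_r\alpha=2d_jr-h_j(\theta)(c_j-2d_jr)$ and $\partial_\theta\alpha=-h'_j(\theta)(c_jr-d_jr^2)$ makes the phase-derivative terms assemble into exactly $F_j(r,\theta,h_j)|\phi_{\mathfrak{b},\ell}|^2$ with $F_j$ as in \eqref{Fj definition}. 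This is the very same algebra as in Lemma~\ref{Lemma 3}; the only change is that $h_j$ is now evaluated on the sub-sector where it takes values in $[0,1]$ rather than in $[-1,0]$.

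It then remains to integrate over $V^-_j=S_\delta(V^+_j)$. Since $S_\delta$ is orthogonal it preserves $r$ and sends the polar angle $\theta$ to $\pi+\delta-\theta$, so $V^-_1$ is the thin sector with $\theta\in\big(\frac{3\pi+\delta-\gamma}{2},\frac{3\pi+\delta}{2}\big)$ and $V^-_2$ the one with $\theta\in\big(\frac{\pi+\delta}{2},\frac{\pi+\delta+\gamma}{2}\big)$ -- precisely the angular domains in \eqref{Eq 4.14} and \eqref{Eq 4.13} -- and the corresponding radial cutoffs are $r_\ast=-\ell/\sin\theta$ and $r_\ast=\ell/\sin\theta$. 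Writing the Lebesgue measure as $r\,dr\,d\theta$, keeping the factor $b_1$ in the first identity and $b_2$ in the second, and expressing the argument of $\phi_{\mathfrak{b},\ell}$ through the polar variables of each sector then yields the two claimed formulas.

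The step I expect to require the most care, though no conceptual difficulty, is this last one: carefully relating the reflected coordinate $-\sin\delta\,x_1+\cos\delta\,x_2$ and the exact radial cutoff $r_\ast$ to the polar variables of the $\mathcal{O}(\gamma)=\mathcal{O}(\delta^{1/2})$-wide sectors $V^-_j$, and verifying that after the elementary reparametrization these produce exactly the integrand $\phi_{\mathfrak{b},\ell}(r\sin\theta)$ and limits displayed in the statement. Everything else is the routine polar-coordinate expansion already carried out for Lemma~\ref{Lemma 3}, performed with $(b_1,c_1,d_1,h_1)$ on $V^-_1$ and $(b_2,c_2,d_2,h_2)$ on $V^-_2$.
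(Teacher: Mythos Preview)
Your proposal is correct and follows exactly the paper's approach: the paper's proof is a single sentence stating that Lemma~\ref{Lemma 6} is proved in the same way as Lemma~\ref{Lemma 3}, with only the $\theta$-integration domain changed. You have simply spelled out that one-line argument in more detail, correctly identifying the reflected angular ranges via $\theta\mapsto\pi+\delta-\theta$ and flagging the one bookkeeping point (relating the argument of $\phi_{\mathfrak{b},\ell}$ and the radial cutoff on the reflected sector to the displayed formulas) that requires care.
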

	\begin{proof}
		This can be shown in the exact same way as Lemma \ref{Lemma 3}. The only difference is the integration domain in \(\theta\), but this does not add any difficulty.
	\end{proof}
	\begin{lemma}\label{Lemma 7} It holds that, as \(\delta \rightarrow 0^+\), 
		\begin{equation}\label{Eq 4.16}
			\int_{V^-_{1}} |(\nabla -i b_1 \mathcal{A})\Psi |^2 dx  = \frac{1}{2}\delta^{\frac{1}{2}}J_{b_1} + \mathcal{O}(\delta^{\frac{3}{2}}), 
		\end{equation}
		and 
		\begin{equation}\label{Eq 4.15}
			\int_{V^-_2} |(\nabla -ib_2 \mathcal{A})\Psi |^2 dx  = \frac{1}{2}\delta^{\frac{1}{2}} \delta^{\frac{1}{2}}J_{b_2} + \mathcal{O}(\delta^{\frac{3}{2}}),
		\end{equation}
		with \(J_{b_1}\) and \(J_{b_2}\) defined as in \eqref{Ja} and \eqref{J1} respectively. 
	\end{lemma}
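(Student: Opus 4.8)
The key observation is that one cannot simply transfer the \(V^+\) computation of Lemmas \ref{Lemma 4} and \ref{Lemma 5} to \(V^-\) by the reflection symmetry of Lemma \ref{Lemma 1}, since on \(V^-\) the trial-state phase is not the reflected phase; the computation must therefore be redone directly, which is exactly what Lemma \ref{Lemma 6} sets up. Starting from the polar representations \eqref{Eq 4.14} and \eqref{Eq 4.13}, the only structural difference with the sectors \(V^+_1,V^+_2\) already treated is the range of the angular variable: on \(V^-_2\) one integrates over \(\theta\in\bigl(\tfrac{\pi+\delta}{2},\tfrac{\pi+\delta+\gamma}{2}\bigr)\) and on \(V^-_1\) over \(\theta\in\bigl(\tfrac{3\pi+\delta-\gamma}{2},\tfrac{3\pi+\delta}{2}\bigr)\). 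Both windows still have length \(\gamma/2\), and on them the expansions used before remain valid: \(\sin\theta=1+\mathcal{O}(\delta)\) on \(V^-_2\), \(\sin\theta=-1+\mathcal{O}(\delta)\) on \(V^-_1\), \(\sin 2\theta=\mathcal{O}(\delta^{1/2})\), \(1/\sin^n\theta=(\pm1)^n+\mathcal{O}(\delta)\) for \(n\in\{1,\dots,4\}\), \(h_2'(\theta)=\tfrac{2}{\gamma}\), \(h_1'(\theta)=-\tfrac{2}{\gamma}\), and \(c_j,d_j\) obey \eqref{c_2 and d_2 expansion} and \eqref{c1 and d1 expansion}. So no new estimate is needed; shifting the angular window by \(\gamma/2\) affects nothing at the order at which we work.

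Concretely, for \(\int_{V^-_2}|(\nabla-ib_2\mathcal{A})\Psi|^2\,dx\) I would split the right-hand side of \eqref{Eq 4.13} into its \(|\phi'_{\mathfrak{b},\ell}|^2\)-part and its \(F_2\)-part and, in each, perform the substitution \(t=r\sin\theta\) exactly as in the proof of Lemma \ref{Lemma 4}. The first part becomes \(\int_\theta\tfrac{1}{\sin^2\theta}\int_0^{\ell}|\phi'_{\mathfrak{b},\ell}(t)|^2\,t\,dt\,d\theta=\tfrac{\gamma}{2}\int_0^{+\infty}|\phi'_{\mathfrak{b}}(t)|^2\,t\,dt+\mathcal{O}(\delta^{3/2})+\mathcal{O}(\delta^{\infty})\) by \eqref{Decay of phi_b,l}. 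In the \(F_2\)-part, its second summand is \(\mathcal{O}(\delta)(r^2+r+1)\) by the estimate leading to \eqref{Eq 4.7}, hence contributes \(\mathcal{O}(\delta^{3/2})\); the first summand, after the same change of variable and using \(h_2'=\tfrac{2}{\gamma}\) together with \eqref{c_2 and d_2 expansion}, gives \(\tfrac{\gamma}{2}\int_0^{+\infty}\bigl(b_2t+\xi_{\mathfrak{b}}(1-\tfrac{\delta}{\gamma})-\tfrac{b_2\delta}{2\gamma}t\bigr)^2|\phi_{\mathfrak{b}}(t)|^2\,t\,dt+\mathcal{O}(\delta^{3/2})\), and since \(\delta/\gamma=\delta^{1/2}\), expanding the square and using \eqref{Decay of phi_b,l} reproduces exactly the two lines of \eqref{J1}. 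Adding the two parts and recalling \(\gamma=\delta^{1/2}\) gives \(\tfrac12\delta^{1/2}J_{b_2}+\mathcal{O}(\delta^{3/2})\), which is \eqref{Eq 4.15}. The argument for \(\int_{V^-_1}\) is identical, mirroring the proof of Lemma \ref{Lemma 5}: now \(\sin\theta=-1+\mathcal{O}(\delta)<0\), so \(t=r\sin\theta\) sends \(r\in\bigl(0,-\ell/\sin\theta\bigr)\) to \(t\in(-\ell,0)\), the radial integrals run over the negative half-line, and \(r\,dr=\tfrac{t}{\sin^2\theta}\,dt=t\,dt\,(1+\mathcal{O}(\delta))\) with \(t<0\); together with the odd powers \(1/\sin^n\theta=-1+\mathcal{O}(\delta)\), \(n\in\{1,3\}\), this produces the overall minus sign appearing in \eqref{Ja} and gives \eqref{Eq 4.16} with \(J_{b_1}\) as stated.

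I do not expect a genuine obstacle: every estimate needed here has already been carried out in Lemmas \ref{Lemma 4} and \ref{Lemma 5}, and the only thing that changes is the angular window, which still has length \(\gamma/2\). The one point that must be handled with care is the sign bookkeeping on \(V^-_1\) — tracking how \(\sin\theta=-1+\mathcal{O}(\delta)\) propagates through the change of variables and through the odd powers of \(1/\sin\theta\) — which is precisely what forces the leading minus sign in \(J_{b_1}\) while \(J_{b_2}\) carries none, reproducing the asymmetry already seen between Lemmas \ref{Lemma 4} and \ref{Lemma 5}. It is also worth checking once more that the polar expression of the phase \(\alpha\) restricted to each \(V^-_j\) is the one actually producing \(F_j(r,\theta,h_j)\) in \eqref{Eq 4.14} and \eqref{Eq 4.13}, so that the bound \(-1\le h_j\le 0\) used to obtain \eqref{Eq 4.7} remains valid on the \(V^-\) sectors.
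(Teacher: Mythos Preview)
Your proposal is correct and follows essentially the same approach as the paper: the paper's proof simply notes that the computation is identical to Lemmas \ref{Lemma 4} and \ref{Lemma 5}, records the Taylor expansions of \(\sin\theta\), \(\sin 2\theta\), \(1/\sin^2\theta\) on the shifted angular windows, and refers back to \eqref{c_2 and d_2 expansion} and \eqref{c1 and d1 expansion} for the constants. Your write-up is in fact more detailed than the paper's, and your opening remark that Lemma \ref{Lemma 1} cannot be invoked here matches exactly the paper's own caveat preceding Lemma \ref{Lemma 6}; the only loose end you flag---whether the bound on \(h_j\) used for \eqref{Eq 4.7} survives on the \(V^-\) sectors---is harmless, since all that estimate needs is that \(h_j\) is uniformly bounded on an interval of length \(\gamma/2\), which holds by linearity of \(h_j\).
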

	\begin{proof}
		This can be proven in the same way as Lemma \ref{Lemma 4} and Lemma \ref{Lemma 5} using \eqref{Definition gamma, delta, l}. If \(\theta \in \paren*{\frac{\pi+\delta}{2}, \frac{\pi+\delta +\gamma}{2}}\), we Taylor expand around \(\frac{\pi}{2}\)
		\begin{equation}
			\sin{\theta} = 1+\mathcal{O}(\delta) , \quad \sin{2\theta} = \mathcal{O}( \delta^{\frac{1}{2}}) , \quad  \frac{1}{\sin^2{\theta}} =  1+\mathcal{O}(\delta),
		\end{equation}
		and the same for \(c_2\), \(d_2\) and \(h_2(\theta)\) (see \eqref{c_2 and d_2 expansion}). If \(\theta\in \paren*{\frac{3\pi+\delta -\gamma}{2}, \frac{3\pi +\delta}{2}}\), we Taylor expand around \(\frac{3\pi}{2}\)
		\begin{equation*}
			\sin{\theta} = -1+\mathcal{O}(\delta),\quad  \sin{2\theta} = \mathcal{O}( \delta^{\frac{1}{2}}),\quad  \frac{1}{\sin^2{\theta}} = 1+\mathcal{O}(\delta),
		\end{equation*}
		and the same for  \(c_1\), \(d_1\) and \(h_1(\theta)\) as in \eqref{c1 and d1 expansion}.
	\end{proof}
	\subsection{Energy in \(T^+\) and \(T^-\)}
	In this case, thanks to the construction of  \(\Psi\), Lemma \ref{Lemma 1} can be applied, so it will be enough to compute the energy in \(T^+\). The results in this section are similar to \cite[Lemma 3.7]{Almostflat}. 
	\begin{lemma}\label{Lemma 9} It holds that, as \(\delta\rightarrow 0^+\), 
		\begin{equation*}
			\int_{T^+}\eta(x_1)^2 |(\nabla -i \sigma_{\mathfrak{b}, \delta} \mathcal{A})\Psi|^2 \, dx   \leq  \paren*{\beta_{\mathfrak{b}} + \mathcal{O}(\delta^{-\infty})} \norm{\eta}_{L^2(\R_+)}^2  +\mathcal{O}(\delta^{\frac{3}{2}}) 
		\end{equation*}
		\begin{equation*}
			- \paren*{\frac{\gamma -\delta }{2}}\int_0^{\infty} \paren*{|\phi'_{\mathfrak{b}}(t)|^2 + (b_2t + \xi_{\mathfrak{b}} )^2|\phi_{\mathfrak{b}}(t)|^2}t\, dt 
		\end{equation*}
		\begin{equation}
			+\paren*{\frac{\gamma +\delta }{2}} \int_{-\infty}^0\paren*{|\phi'_{\mathfrak{b}}(t)|^2 + (b_1t + \xi_{\mathfrak{b}})^2|\phi_{\mathfrak{b}}(t)|^2}t\, dt. 
		\end{equation}
		\begin{proof}
			Let \(\omega (x) := \phi_{\mathfrak{b},\ell}(x_2) e^{i\xi_{\mathfrak{b}} x_1}\) and \(R:= (0,+\infty) \times (-\ell,\ell)\), then 
			\[\begin{split}
				\int_{T^+}\eta(x_1)^2 |(\nabla -i\sigma_{\mathfrak{b}, \delta} \mathcal{A})\omega|^2 \,  dx  = & \int_{R}\eta(x_1)^2 |(\nabla -i\sigma_{\mathfrak{b}, \delta}  \mathcal{A})\omega|^2 \, dx\\
				& - \int_{R\setminus T^+}\eta(x_1)^2 |(\nabla -i\sigma_{\mathfrak{b}, \delta} \mathcal{A})\omega|^2 \, dx.
			\end{split} \]
			Using the same Gauge transformation as in Lemma \ref{Lemma 3}, \(\zeta(x_1, x_2) =x_1x_2/2\), and the fact that the cut-off function \(\eta(x_1) =1\) in a neighborhood of the origin, we can write
			\begin{equation*}
				\int_{R\setminus T^+}\eta(x_1)^2 |(\nabla -i\sigma_{\mathfrak{b}, \delta} \mathcal{A})\omega |^2 \, dx = E_1 + E_2  ,
			\end{equation*}
			where 
			\begin{equation*}
				E_1 :=\int_{\frac{3\pi}{2}}^{\frac{3\pi+\gamma +\delta}{2}} \int_0^{\frac{-\ell}{\sin{\theta}}}  \left(|\partial_r \omega_1|^2 +r^{-2}\bigabs{(\partial_{\theta}-ib_1\frac{r^2}{2}) \omega_1}^2 \right)\, r\, drd\theta 
			\end{equation*}
			\begin{equation*}
				E_2 :=\int_{\frac{\pi+\delta -\gamma}{2}}^{\frac{\pi}{2}} \int_0^{\frac{\ell}{\sin{\theta}}}  \paren*{|\partial_r \omega_2 |^2 +r^{-2}\bigabs{\paren*{\partial_{\theta}-ib_2\frac{r^2}{2}} \omega_2}^2}\, r\, drd\theta
			\end{equation*}
			and
			\begin{equation*}
				\omega_j(r\cos{\theta}, r\sin{\theta})= e^{i b_j\frac{r^2 \sin(2\theta)}{4}}\omega(r\cos{\theta}, r\sin{\theta})  ,
			\end{equation*}    
			where \(j\in \{1,2\}\). Note that both terms are similar except for \(b_1\) or \(b_2\), so we will compute everything for \(\omega_2\).
			\begin{equation}\label{Eq 4.18}
				\abs{\partial_r \omega_2}^2 = \sin^2{\theta} | \phi'_{\mathfrak{b},\ell}(r\sin{\theta})|^2 + \paren*{\xi_{\mathfrak{b}} \cos{\theta} + \frac{1}{2}rb_2\sin(2\theta)}^2 | \phi_{\mathfrak{b},\ell}(r\sin{\theta})|^2, 
			\end{equation}
			\begin{equation}\label{Eq 4.19}
				\begin{split}
					r^{-2}\bigabs{\paren*{\partial_{\theta}-ib_2\frac{r^2}{2}} \omega_2}^2 = & \cos^2{\theta} |\phi'_{\mathfrak{b},\ell}(r\sin{\theta})|^2 \\
					& +\paren*{ b_2 r \sin^2{\theta} + \xi_{\mathfrak{b}} \sin{\theta} }^2 |\phi_{\mathfrak{b},\ell}(r\sin{\theta})|^2,
				\end{split}
			\end{equation}
			where, as in Lemma \ref{Lemma 3}, we used the fact that \(\phi_{\mathfrak{b}}\) and \(\phi'_{\mathfrak{b}}\) are real valued. 
			Thus, combining \eqref{Eq 4.18} and \eqref{Eq 4.19}, 
			\begin{equation*}
				\begin{split}
					E_2  = &  \int_{\frac{\pi+\delta -\gamma}{2}}^{\frac{\pi}{2}} \int_0^{\frac{\ell}{\sin{\theta}}} (|\phi'_{\mathfrak{b},\ell}(r\sin{\theta})|^2 r\, drd\theta\\
					& + \int_{\frac{\pi+\delta -\gamma}{2}}^{\frac{\pi}{2}} \int_0^{\frac{\ell}{\sin{\theta}}} (b_2r\sin{\theta} +\xi_{\mathfrak{b}})^2 |\phi_{\mathfrak{b},\ell}(r\sin{\theta})|^2\,  r\, dr\, d\theta.
				\end{split}     
			\end{equation*}
			Introducing the change of variables \(t= r\sin{\theta}\) and using that \(\sin^{-2}{\theta} = 1 +\mathcal{O}(\delta) \) for \(\theta \in \paren*{\frac{\pi+\delta -\gamma}{2}, \frac{\pi}{2}}\), by \eqref{Definition gamma, delta, l} we get for all \(N\in \N\), 
			\begin{equation}\label{Eb}
				E_2 \geq   \frac{\gamma - \delta}{2} \int_{0}^{+\infty }  \paren*{|\phi'_{\mathfrak{b}}(t)|^2 + \paren*{  (b_2t +\xi_{\mathfrak{b}})^2} |\phi_{\mathfrak{b}}(t)|^2)}t\, dt -C_2\delta^{\frac{3}{2}} - C_2 \delta^{N}, 
			\end{equation}
			for some constant \(C_2>0\). For \(E_1\) we can proceed similarly and get
			\begin{equation}\label{Ea}
				E_1\geq   -\paren*{\frac{\gamma +\delta }{2}}\int_{-\infty}^0\paren*{|\phi'_{\mathfrak{b}}(t)|^2 + (b_1t + \xi_{\mathfrak{b}})^2|\phi_{\mathfrak{b}}(t)|^2}\, t\, dt -C_1\delta^{\frac{3}{2}} -C_1 \gamma \delta^N ,
			\end{equation}
			for some constant \(C_1>0\) and for all \(N\in\N\). Finally, using \eqref{Decay of phi_b,l} and \eqref{Definition gamma, delta, l}
			\begin{equation*}\label{R part}
				\begin{split}
					\MoveEqLeft[3] \int_{R}\eta(x_1)^2 |(\nabla +i\sigma_{\mathfrak{b}, \delta} \mathcal{A})\Psi|^2 \, dx \\  &=  \int_{\R_+}\eta(x_1)^2 \int_{-\ell}^{\ell}   |\phi'_{\mathfrak{b},\ell}(x_2)|^2 \, dx \\
					&\quad + \int_{\R_+}\eta(x_1)^2 \int_{-\ell}^{\ell}   (\xi_{\mathfrak{b}} +\sigma_{\mathfrak{b}, \delta} (x) x_2)^2 \abs{\phi_{\mathfrak{b},\ell }(x_2)}^2  \, dx_2 \\
					&  =  \paren*{\beta_{\mathfrak{b}} + \mathcal{O}( \delta^{\infty})} \norm{\eta}_{L^2(\R_+)}^2.
				\end{split}
			\end{equation*}
			Combining this with \eqref{Eb} and \eqref{Ea} gives the result. 
		\end{proof}
	\end{lemma}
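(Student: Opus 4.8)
The plan is to use that on $T^{+}$ the trial state $\Psi$ coincides with the separable function $\omega(x):=\phi_{\mathfrak{b},\ell}(x_2)\,e^{i\xi_{\mathfrak{b}}x_1}$, and that $T^{+}$ is the strip $R:=(0,+\infty)\times(-\ell,\ell)$ with two thin angular wedges adjacent to the $x_2$–axis removed. Accordingly I would write
\[
\int_{T^{+}}\eta(x_1)^2\,|(\nabla-i\sigma_{\mathfrak{b},\delta}\mathcal{A})\omega|^2\,dx
=\int_{R}\eta(x_1)^2\,|(\nabla-i\sigma_{\mathfrak{b},\delta}\mathcal{A})\omega|^2\,dx
-\int_{R\setminus T^{+}}\eta(x_1)^2\,|(\nabla-i\sigma_{\mathfrak{b},\delta}\mathcal{A})\omega|^2\,dx ,
\]
compute the first (main) term exactly, and bound the second from below; since that integrand is non‑negative, this produces exactly the kind of upper bound claimed.

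For the main term, on $R$ one has $x_1>0$, so $\sigma_{\mathfrak{b},\delta}$ collapses to the one–dimensional weight $\sigma_{\mathfrak{b}}(x_2)=b_1\mathbbm{1}_{\R_-}(x_2)+b_2\mathbbm{1}_{\R_+}(x_2)$. Using $\mathcal{A}=(-x_2,0)$ and $\partial_{x_1}\omega=i\xi_{\mathfrak{b}}\omega$ one gets $(\nabla-i\sigma_{\mathfrak{b},\delta}\mathcal{A})\omega=\bigl(i(\xi_{\mathfrak{b}}+\sigma_{\mathfrak{b}}(x_2)x_2)\omega,\ \phi'_{\mathfrak{b},\ell}(x_2)e^{i\xi_{\mathfrak{b}}x_1}\bigr)$, so by Fubini the integral over $R$ factorises as $\norm{\eta}_{L^2(\R_+)}^2$ times $\int_{\R}\bigl(|\phi'_{\mathfrak{b},\ell}(t)|^2+V_{\mathfrak{b}}(\xi_{\mathfrak{b}},t)|\phi_{\mathfrak{b},\ell}(t)|^2\bigr)\,dt=q_{\mathfrak{b}}[\xi_{\mathfrak{b}}](\phi_{\mathfrak{b},\ell})$ (the $x_2$–integral over $(-\ell,\ell)$ being a full integral since $\supp\phi_{\mathfrak{b},\ell}\subset[-\ell,\ell]$). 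By \eqref{Decay of phi_b,l} and the fact that $\phi_{\mathfrak{b}}$ is an $L^2$–normalised ground state of $\mathfrak{h}_{\mathfrak{b}}[\xi_{\mathfrak{b}}]$ with $\mu_{\mathfrak{b}}(\xi_{\mathfrak{b}})=\beta_{\mathfrak{b}}$ (cf. \eqref{beta as 1-D} and Remark \ref{Remark b cases}), this equals $\beta_{\mathfrak{b}}+\mathcal{O}(\delta^{\infty})$; hence $\int_R\eta^2\,|(\nabla-i\sigma_{\mathfrak{b},\delta}\mathcal{A})\omega|^2\,dx=(\beta_{\mathfrak{b}}+\mathcal{O}(\delta^{\infty}))\norm{\eta}_{L^2(\R_+)}^2$.

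Next I would treat $R\setminus T^{+}$. Unwinding Definition \ref{Set partition}, it is (up to a null set) the union of $W_1:=\{0<x_1\le-x_2\tan\tfrac{\gamma+\delta}{2},\ -\ell<x_2<0\}$, which in polar coordinates is $\{\tfrac{3\pi}{2}<\theta<\tfrac{3\pi+\gamma+\delta}{2},\ 0<r<-\ell/\sin\theta\}$ and is contained in $\Omega_{1,\delta}$, and $W_2:=\{0<x_1\le x_2\tan\tfrac{\gamma-\delta}{2},\ 0<x_2<\ell\}$, i.e. $\{\tfrac{\pi+\delta-\gamma}{2}<\theta<\tfrac{\pi}{2},\ 0<r<\ell/\sin\theta\}$, contained in $\Omega_{2,\delta}$; moreover $\eta\equiv1$ on $W_1\cup W_2$, both wedges lying in a fixed neighbourhood of the origin. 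On $W_j$ the weight $\sigma_{\mathfrak{b},\delta}$ is the constant $b_j$, so I would apply the gauge change $\omega\mapsto e^{ib_j x_1x_2/2}\omega$ (to the symmetric gauge $\mathcal{A}_0=\tfrac r2\hat{e}_\theta$) and expand $|\partial_r(\cdot)|^2+r^{-2}|(\partial_\theta-ib_j\tfrac{r^2}{2})(\cdot)|^2$ in polar coordinates; after the cross terms cancel the covariant energy density is $|\phi'_{\mathfrak{b},\ell}(r\sin\theta)|^2+(\xi_{\mathfrak{b}}+b_j r\sin\theta)^2|\phi_{\mathfrak{b},\ell}(r\sin\theta)|^2$. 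Substituting $t=r\sin\theta$ (so $r\,dr=t\,dt/\sin^{2}\theta$), using $1/\sin^{2}\theta\ge1$ on each wedge, that the $\theta$–interval has length $\tfrac{\gamma+\delta}{2}$, resp. $\tfrac{\gamma-\delta}{2}$, and passing from the truncated $t$–integrals to half–line integrals of $\phi_{\mathfrak{b}}$ up to $\mathcal{O}(\delta^{\infty})$ via \eqref{Decay of phi_b,l} and the exponential decay of $\phi_{\mathfrak{b}},\phi'_{\mathfrak{b}}$, I obtain
\[
\int_{W_1}|(\nabla-ib_1\mathcal{A})\omega|^2\,dx\ \ge\ -\paren*{\tfrac{\gamma+\delta}{2}}\int_{-\infty}^{0}\paren*{|\phi'_{\mathfrak{b}}(t)|^2+(b_1t+\xi_{\mathfrak{b}})^2|\phi_{\mathfrak{b}}(t)|^2}t\,dt+\mathcal{O}(\delta^{3/2}),
\]
\[
\int_{W_2}|(\nabla-ib_2\mathcal{A})\omega|^2\,dx\ \ge\ \paren*{\tfrac{\gamma-\delta}{2}}\int_{0}^{\infty}\paren*{|\phi'_{\mathfrak{b}}(t)|^2+(b_2t+\xi_{\mathfrak{b}})^2|\phi_{\mathfrak{b}}(t)|^2}t\,dt+\mathcal{O}(\delta^{3/2}),
\]
both integrands being non‑negative on the relevant half–line. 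Subtracting these from $(\beta_{\mathfrak{b}}+\mathcal{O}(\delta^{\infty}))\norm{\eta}_{L^2(\R_+)}^2$ gives the asserted inequality.

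The individual steps are close to the ones already carried out for $V^{\pm}$ (cf. Lemma \ref{Lemma 3} and those following it), so no essential difficulty arises. The two points needing care are (i) reading off correctly, from Definition \ref{Set partition}, the polar geometry of $W_1,W_2$ — in particular that their opening angles are $\tfrac{\gamma\pm\delta}{2}$ — and (ii) the sign change forced by $x_2<0$ on $W_1$, which is what makes the term $\paren*{\tfrac{\gamma+\delta}{2}}\int_{-\infty}^{0}(\cdots)t\,dt$ enter the final estimate with a plus sign. Once that bookkeeping is settled, the error terms follow directly from \eqref{Decay of phi_b,l} and the exponential decay of $\phi_{\mathfrak{b}}$ and $\phi'_{\mathfrak{b}}$.
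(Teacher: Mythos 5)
Your proposal is correct and follows essentially the same route as the paper: decompose $T^+$ as $R$ minus two thin wedges, evaluate the integral over the strip $R$ via Fubini to get $(\beta_{\mathfrak{b}}+\mathcal{O}(\delta^\infty))\|\eta\|_{L^2(\R_+)}^2$, and bound the wedge contributions from below in polar coordinates after the same gauge change. The only (cosmetic) difference is that you use the inequality $\sin^{-2}\theta\ge1$ rather than the expansion $\sin^{-2}\theta=1+\mathcal{O}(\delta)$; this avoids one $\mathcal{O}(\delta^{3/2})$ error term but yields the same statement, and your sign bookkeeping for the $t<0$ wedge is correct.
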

	\subsection{Estimate of the energy of \(\Psi^{tr}\)}\label{Estimate of the energy of Psi}
	We finish this section computing the energy of the trial state \(\Psi^{tr}\). 
	\begin{lemma}\label{Lemma energy Psi (tr)} It holds that, as \(\delta\rightarrow 0^+\), 
		\begin{equation}
			\begin{split}
				\mathcal{Q}_{\mathfrak{b}, \delta} (\Psi^{tr}) =&  \int_{\Omega_{2,\delta}} |\eta(x_1)|^2 |(\nabla - ib_2 \mathcal{A}) \Psi |^2 \, dx  \\
				& + \int_{\Omega_{1, \delta}}\abs{\eta(x_1)}^2 |(\nabla - ib_1\mathcal{A})\Psi |^2 \, dx  \\
				& + 2\norm{\phi_{\mathfrak{b}}}^2_{L^2(\R)} \int_{\R_+} |\eta'_+(x_1)|^2 dx_1 + \mathcal{O}(\delta^{\infty}).
			\end{split}       
		\end{equation}
	\end{lemma}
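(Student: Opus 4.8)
The plan is to combine the magnetic Leibniz rule with the axial symmetry encoded in $S_\delta$. Recall that $\Psi$ vanishes outside $\bigcup_{j\in\{1,2\}}(T^+_j\cup T^-_j\cup V^+_j\cup V^-_j)$, that $T^\pm_j\cup V^\pm_j\subset\Omega_{j,\delta}$, and that, after the gauge change leading to \eqref{Quadratic gauge invariance}, the covariant derivative attached to $\mathcal{Q}_{\mathfrak{b},\delta}$ restricted to $\Omega_{j,\delta}$ is $\nabla-ib_j\mathcal{A}$ with $\mathcal{A}=(-x_2,0)$. Since $\Psi^{tr}$ is compactly supported and continuous (by the choice of $\alpha$ and of $\eta_+$), it lies in $\mathcal{D}(\mathcal{Q}_{\mathfrak{b},\delta})$ and
\[
\mathcal{Q}_{\mathfrak{b},\delta}(\Psi^{tr})=\sum_{j=1}^{2}\int_{\Omega_{j,\delta}}|(\nabla-ib_j\mathcal{A})(\eta\Psi)|^2\,dx .
\]
As $\eta$ is real-valued, $(\nabla-ib_j\mathcal{A})(\eta\Psi)=\eta\,(\nabla-ib_j\mathcal{A})\Psi+(\nabla\eta)\Psi$, so the integrand equals $\eta^2|(\nabla-ib_j\mathcal{A})\Psi|^2+|\nabla\eta|^2|\Psi|^2+\nabla(\eta^2)\cdot\operatorname{Re}\!\big(\overline{\Psi}(\nabla-ib_j\mathcal{A})\Psi\big)$, and, $b_j\mathcal{A}$ being real, $\operatorname{Re}\!\big(\overline{\Psi}(\nabla-ib_j\mathcal{A})\Psi\big)=\tfrac12\nabla|\Psi|^2$. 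The sum of the first pieces is exactly the first two terms of the statement, so it remains to treat the cross term and the $|\nabla\eta|^2|\Psi|^2$ term.

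For the cross term $\tfrac12\int\nabla(\eta^2)\cdot\nabla|\Psi|^2$ I would argue that it vanishes on each building block. On $V^\pm_j$ one has $\eta\equiv1$ (forced by continuity of $\Psi^{tr}$, since the concrete $\eta_+$ fixed in Section~\ref{Section proof main theorem} equals $1$ on the parts of $\partial T^\pm$ adjacent to $V^\pm$), so $\nabla\eta=0$ there. On $T^+$, $\eta=\eta_+(x_1)$ depends only on $x_1$ while $|\Psi|^2=|\phi_{\mathfrak{b},\ell}(x_2)|^2$ depends only on $x_2$, hence $\nabla(\eta^2)\perp\nabla|\Psi|^2$. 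On $T^-$, the isometric, measure-preserving substitution $y=S_\delta x$ (with $S_\delta(T^-)=T^+$) turns, by \eqref{Eta definition} and the formula for $\Psi$ on $T^-$, the integrand into its $T^+$ analogue in the variables $y_1,y_2$, which vanishes as before. Thus the cross term is identically $0$.

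Finally, $\int|\nabla\eta|^2|\Psi|^2$ also vanishes on $V^\pm$, while on $T^+$ it equals $\int_{T^+}|\eta'_+(x_1)|^2|\phi_{\mathfrak{b},\ell}(x_2)|^2\,dx$; since $\supp\eta'_+$ will be chosen inside the set of $x_1$ for which the $x_2$-section of $T^+$ already contains $\supp\phi_{\mathfrak{b},\ell}\subset[-\ell,\ell]$, Fubini and \eqref{Decay of phi_b,l} give $\norm{\phi_{\mathfrak{b}}}_{L^2(\R)}^2\int_{\R_+}|\eta'_+|^2\,dx_1+\mathcal{O}(\delta^{\infty})$; the same substitution $y=S_\delta x$ shows the $T^-$ contribution is identical, producing the factor $2$. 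Collecting the three parts yields the claim. The only point requiring care is this geometric compatibility — that $\eta$ may be taken $\equiv1$ on $V^\pm$ while remaining continuous across the interfaces, and that $\supp\eta'_+$ sits where the $T^\pm$-slices already cover $[-\ell,\ell]$; both hinge on the elementary estimates $\ell\tan(\tfrac{\gamma\pm\delta}{2})=\tfrac12+o(1)$ coming from \eqref{Definition gamma, delta, l} and will be guaranteed by the explicit definition of $\eta_+$ in Section~\ref{Section proof main theorem}. No new analytic input beyond \eqref{Decay of phi_b,l} is needed.
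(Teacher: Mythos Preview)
Your argument is correct, and it is somewhat more direct than the paper's. The paper proceeds by integrating $\int|\nabla\eta|^2|\Psi|^2$ by parts to produce $-\int\eta\Delta\eta|\Psi|^2-\int 2\eta\nabla\eta\cdot\Re(\bar\Psi\nabla\Psi)$, observes that the second piece exactly cancels the cross term (since $\Re(-i b_j\mathcal{A}|\Psi|^2)=0$), and is left with $-\int\eta\,\eta''|\Psi|^2$; a second integration by parts in $x_1$ on the strip then recovers $\int|\eta'_+|^2\,\|\phi_{\mathfrak b,\ell}\|^2$. This route needs $\eta_+\in H^2$ and a density argument (see the remark following the paper's proof), and it uses that on the interface $\partial\Omega_{j,\delta}$ the normal is orthogonal to $\nabla\eta$.

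You bypass both integrations by parts by noting that on $T^+$ the functions $\eta^2$ and $|\Psi|^2$ depend on orthogonal coordinates, so $\nabla(\eta^2)\cdot\nabla|\Psi|^2=0$ pointwise, and on $T^-$ the same holds after the isometric change $y=S_\delta x$. This kills the cross term immediately and leaves the $|\nabla\eta|^2|\Psi|^2$ term in the tensor-product form $|\eta'_+|^2|\phi_{\mathfrak b,\ell}|^2$, to which Fubini and \eqref{Decay of phi_b,l} apply directly. Your approach therefore works already for $\eta_+\in H^1$ and does not invoke the normal-to-interface observation; the only delicate point is the geometric compatibility (that $\eta\equiv1$ on $V^\pm$ and that $\supp\eta'_+$ sits where the $x_2$-slice already contains $[-\ell,\ell]$), which you correctly flag and which is indeed ensured by the explicit $\eta_+$ chosen in Section~\ref{Section proof main theorem} together with $\ell\tan(\tfrac{\gamma\pm\delta}{2})=\tfrac12+o(1)$.
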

	\begin{proof}
		Since  \(\Psi^{tr} = \eta\Psi\)
		\begin{equation}\label{Q_b, delta (Psi^tr)}
			\begin{split}
				\mathcal{Q}_{\mathfrak{b}, \delta} (\Psi^{tr})  =& \mathcal{Q}_{\mathfrak{b}, \delta} (\eta\Psi) \\
				=& \int_{\Omega_{2, \delta}} |\eta|^2 |(\nabla - ib_2 \mathcal{A}) \Psi |^2 \, dx  \\
				& + \int_{\Omega_{1, \delta}}\abs{\eta}^2 |(\nabla - ib_1\mathcal{A})\Psi |^2 \, dx\\
				& + \int_{\Omega_{2, \delta}} \abs{\Psi}^2 |\nabla \eta |^2\, dx +  \int_{\Omega_{1, \delta}}\abs{\Psi}^2 |\nabla{\eta} |^2\; dx \\
				& +\int_{\Omega_{2, \delta} } 2\Re \paren*{\bar{\Psi}\nabla \eta ( \nabla  - ib_2 \mathcal{A})\Psi} \, dx \\
				& +\int_{\Omega_{1, \delta} } 2\Re \paren*{\bar{\Psi}\nabla \eta ( \nabla  - ib_1 \mathcal{A})\Psi}\, dx .
			\end{split}        
		\end{equation}
		By construction, the normal vector \(\nu\) is orthogonal to \(\nabla \eta\). Integrating by parts,
		\[ \int_{\Omega_{j, \delta}} \abs{\Psi}^2 |\nabla \eta |^2\, dx = - \int_{\Omega_{j, \delta}} \eta\Delta \eta |\Psi|^2 dx -  \int_{\Omega_{j, \delta}} 2\Re\paren*{\eta (\nabla \eta ) (\nabla \Psi) \bar{\Psi}},\]
		for \(j\in \{1,2\}\). We can use it to simplify \eqref{Q_b, delta (Psi^tr)} if we observe 
		\[\Re (- i\abs{\Psi}^2 \nabla \eta b_2 \mathcal{A})) = 0 \text{ and } \Re (- i\abs{\Psi}^2 \nabla \eta b_1 \mathcal{A})) = 0 ,\]
		where we used the fact that \(\nabla \eta\), \(\mathcal{A}\), \(b_1\) and \(b_2\) are real valued. Hence, 
		\begin{equation*}
			\begin{split}
				\mathcal{Q}_{\mathfrak{b}, \delta} (\Psi^{tr})  = &  \int_{\Omega_{2, \delta}} |\eta|^2 |(\nabla - ib_2 \mathcal{A}) \Psi |^2 \, dx + \int_{\Omega_{1,\delta}}\abs{\eta}^2 |(\nabla - ib_1\mathcal{A})\Psi |^2 \, dx\\
				& - \int_{\Omega_{2, \delta}} \eta(x_1) \eta''(x_1)  |\Psi|^2 dx - \int_{\Omega_{1, \delta}} \eta(x_1) \eta''(x_1)  |\Psi|^2 dx .
			\end{split}
		\end{equation*}
		Observe that, by construction (see \eqref{Eta definition} and definition of \(\Psi\)), 
		\begin{multline*}
			- \int_{\Omega_{2, \delta}} \eta(x_1) \eta''(x_1)  |\Psi|^2 dx - \int_{\Omega_{1, \delta}} \eta(x_1) \eta''(x_1)  |\Psi|^2 dx      \\
			= -2\int_{T^+} \eta(x_1) \eta''(x_1)  |\phi_{\mathfrak{b}, \ell}(x_2) |^2 dx .
		\end{multline*}
		Moreover, since \(\eta'_+(x) = 0\) on a small neighbourhood of the origin,  we can restrict the last integral to \(T^+\cap(\R_+ \times \R)\). Then, using again integration by parts and the decay of \(\phi_{\mathfrak{b}, \ell}\) (see \eqref{Decay of phi_b,l}) we get the desired expression. 
	\end{proof}
	\begin{remark}
		As in \cite[Section 3.5.1]{Almostflat} the proof requires \(\eta_+\in H^2(\R)\), but a density argument gives the identity for all \(\eta_+\in H^1(\R)\) with \(\eta_+\) being 1 in a neighborhood of the origin. 
	\end{remark}
	\section{Estimate of the \(L^2\)-norm}\label{Section L^2 norm}
	In the last section we computed the energy of \(\Psi^{tr}\). In this section, we will compute the \(L^2\) norm of \(\Psi^{tr}\). 
	\begin{lemma}\label{Lemma L2 norm} It holds that, as \(\delta\rightarrow 0^+\), 
		\begin{equation}\label{L2 norm}
			\int_{\R^2}|\Psi^{tr}|^2 \, dx = (1+\mathcal{O}(\delta^{\infty}))2 \norm{\eta_+}^2_{L^2(\R_+)}+ \delta  \int_{-\infty}^{+\infty} |\phi_{\mathfrak{b}}(t)|^2 t \, dt  +\mathcal{O}(\delta^3).  
		\end{equation}
	\end{lemma}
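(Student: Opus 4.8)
The plan is to evaluate $\int_{\R^2}|\Psi^{tr}|^2\,dx$ region by region, following the partition of Definition~\ref{Set partition}. Since $\Psi$, hence $\Psi^{tr}=\eta\Psi$, vanishes outside the (pairwise disjoint up to null sets) union $T^+\cup T^-\cup V^+\cup V^-$, I would first split the integral into the four corresponding pieces, and then use the reflection $S_\delta$ to fold the ``$-$'' pieces onto the ``$+$'' ones. Indeed, on $T^-$ and on $V^-$ the definition of $\Psi$ gives $|\Psi(x)|=|\phi_{\mathfrak{b},\ell}((S_\delta x)_2)|$ (the various phases being irrelevant for the modulus), while $|\eta(x)|=|\eta(S_\delta x)|$ there (on $T^\pm$ this is the identity noted after \eqref{Eta definition}, and $\eta\equiv 1$ on $V^\pm$); hence $|\Psi^{tr}(x)|=|\Psi^{tr}(S_\delta x)|$, and since $|\det S_\delta|=1$ the change of variables $y=S_\delta x$ yields $\int_{T^-}|\Psi^{tr}|^2=\int_{T^+}|\Psi^{tr}|^2$ and $\int_{V^-}|\Psi^{tr}|^2=\int_{V^+}|\Psi^{tr}|^2$. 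So it remains to compute $\int_{T^+}|\Psi^{tr}|^2+\int_{V^+}|\Psi^{tr}|^2$ and double it.

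For this I would pair up $T^+_2$ with $V^+_2$ and $T^+_1$ with $V^+_1$, treating the two sides separately since the kink of the barrier makes them slightly asymmetric. On $T^+$ one has $|\Psi^{tr}|^2=|\eta_+(x_1)|^2|\phi_{\mathfrak{b},\ell}(x_2)|^2$. Setting $R:=(0,+\infty)\times(-\ell,\ell)$ and $W_2:=(R\cap\{x_2>0\})\setminus T^+_2$ (the angular wedge deleted to form $T^+_2$),
\[
\int_{T^+_2}|\Psi^{tr}|^2=\int_{R\cap\{x_2>0\}}|\eta_+|^2|\phi_{\mathfrak{b},\ell}(x_2)|^2-\int_{W_2}|\eta_+|^2|\phi_{\mathfrak{b},\ell}(x_2)|^2,
\]
the first term being $\norm{\eta_+}^2_{L^2(\R_+)}\big(\int_0^{\infty}|\phi_{\mathfrak{b}}|^2+\mathcal{O}(\delta^{\infty})\big)$ by \eqref{Decay of phi_b,l}. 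The crucial point is geometric: in polar coordinates $V^+_2$ is exactly the union of $W_2$ and the sector $\{\theta\in(\tfrac{\pi}{2},\tfrac{\pi+\delta}{2})\}$ capped by $\{x_2<\ell\}$, whose opening is precisely $\delta/2$, half the kink angle. Since $\eta\equiv 1$ and $|\Psi|=|\phi_{\mathfrak{b},\ell}(x_2)|$ on $V^+$, adding $\int_{V^+_2}|\Psi^{tr}|^2$ makes the two integrals over $W_2$ recombine into $\int_{W_2}(1-|\eta_+|^2)|\phi_{\mathfrak{b},\ell}(x_2)|^2$, which is $\mathcal{O}(\delta^{\infty})$ because $|\eta_+|\neq 1$ on $W_2$ forces $x_1\gtrsim 1$, hence $x_2\gtrsim(\tan\tfrac{\gamma-\delta}{2})^{-1}\sim\delta^{-1/2}$, where $\phi_{\mathfrak{b}}$ is exponentially small and $\ell=\delta^{-1/2}$.

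What survives is the sector integral, which in polar coordinates with the substitution $t=r\sin\theta$ and \eqref{Decay of phi_b,l} equals $\big(\int_0^{\delta/2}\sec^2 v\,dv\big)\int_0^{\infty}t|\phi_{\mathfrak{b}}(t)|^2\,dt+\mathcal{O}(\delta^{\infty})=\tan(\delta/2)\int_0^{\infty}t|\phi_{\mathfrak{b}}(t)|^2\,dt+\mathcal{O}(\delta^{\infty})$. The pieces in $\{x_2<0\}$ are handled the same way with $W_1:=(R\cap\{x_2<0\})\setminus T^+_1$; the only change is that now $V^+_1$ is \emph{contained} in $W_1$, with $W_1\setminus V^+_1$ the sector of opening $\delta/2$, so that contribution comes with a minus sign, namely $-\tan(\delta/2)\int_{-\infty}^{0}(-t)|\phi_{\mathfrak{b}}(t)|^2\,dt$. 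Collecting everything and using $\int_{\R}|\phi_{\mathfrak{b}}|^2=1$,
\[
\int_{T^+}|\Psi^{tr}|^2+\int_{V^+}|\Psi^{tr}|^2=\norm{\eta_+}^2_{L^2(\R_+)}(1+\mathcal{O}(\delta^{\infty}))+\tan(\delta/2)\int_{-\infty}^{+\infty}t|\phi_{\mathfrak{b}}(t)|^2\,dt+\mathcal{O}(\delta^{\infty}),
\]
and doubling, then using $2\tan(\delta/2)=\delta+\mathcal{O}(\delta^{3})$ and the finiteness of $\int_{\R}t|\phi_{\mathfrak{b}}(t)|^2\,dt$ (exponential decay of $\phi_{\mathfrak{b}}$), gives exactly \eqref{L2 norm}.

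The step I expect to be the main obstacle is this cancellation: one must resist Taylor-expanding $\tan\tfrac{\gamma\mp\delta}{2}$ along the way, for with $\gamma=\delta^{1/2}$ those expansions produce errors of size $\mathcal{O}(\delta^{3/2})$, much too large for the asserted $\mathcal{O}(\delta^{3})$. The estimate is nevertheless this sharp for a structural reason: the wedges $W_1,W_2$ deleted from the rectangle $R$ to build $T^\pm$ are restored \emph{exactly} (modulo $\mathcal{O}(\delta^{\infty})$, since $\eta\equiv 1$ there up to exponentially small tails) by the transition sectors $V^\pm$, apart from an angular mismatch of precisely $\delta/2$ on each side; so every $\mathcal{O}(\gamma)$-sized geometric quantity cancels in the sum and only the harmless $\tan(\delta/2)$ remains. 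The remaining ingredients — discarding the tails where $\eta_+\neq 1$, replacing $\phi_{\mathfrak{b},\ell}$ by $\phi_{\mathfrak{b}}$, and extending the $t$-integrals to $(0,\infty)$ — are all routine uses of the exponential decay recorded in \eqref{Decay of phi_b,l}.
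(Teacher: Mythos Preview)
Your proof is correct and follows essentially the same route as the paper's: use the reflection $S_\delta$ to reduce to $T^+\cup V^+$, observe that this region is the rectangle $(0,\infty)\times(-\ell,\ell)$ up to two sectors of opening $\delta/2$ (one added, one removed), and evaluate those sectors via $t=r\sin\theta$. The only differences are cosmetic: you compute the angular integral exactly as $\tan(\delta/2)$ and then expand, whereas the paper expands $\sin^{-2}\theta=1+\mathcal{O}(\delta^2)$ first; and you spell out the $\int_{W_2}(1-\eta_+^2)|\phi_{\mathfrak{b},\ell}|^2=\mathcal{O}(\delta^\infty)$ correction that the paper leaves implicit.
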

	\begin{proof}
		From the construction of \(\Phi\), and the proof of Lemma \ref{Lemma energy Psi (tr)}, it is clear that \( \int_{T^+ \cup V^+ }|\Psi^{tr}|^2 \, dx = \int_{T^-\cup V^-}|\Psi^{tr}|^2 \, dx \) since introducing the change of variables \(y= S_{\delta}x\) we have
		\begin{equation*}
			\begin{split}
				\int_{T^-\cup V^-}|\Psi^{tr}|^2 \, dx& = \int_{T^-\cup V^-} \eta_+(-x_1\cos{\delta} - x_2\sin{\delta})^2 | \phi_{\mathfrak{b},\ell} (-x_1\sin{\delta} + x_2\cos{\delta})|^2 \,  dx \\
				& = \int_{T^+ \cup V^+} \eta_+(y_1)^2 |\phi_{\mathfrak{b},\ell}(y_2)|^2 dx =  \int_{T^+ \cup V^+}|\Psi^{tr}|^2 \, dy .
			\end{split}
		\end{equation*}
		Moreover, applying the change of variables \(t = r\sin{\theta}, dt = dr\sin{\theta}\) 
		\begin{equation*}
			\begin{split}
				\int_{T^+\cup V^+}|\Psi^{tr}|^2 dx  =  & \int_{(0, +\infty)\times(-\ell,\ell)} \eta_+(x_1)^2 |\phi_{\mathfrak{b},\ell}(x_2)|^2 \,  dx\\
				&  +\int_{\frac{\pi}{2}}^{\frac{\pi+\delta}{2}}\int_0^{\frac{\ell}{\sin{\theta}}} |\phi_{\mathfrak{b},\ell}(r\sin{\theta})|^2 r \, dr \, d\theta\\
				&  -\int_{\frac{3\pi}{2}}^{\frac{3\pi+\delta}{2}}\int_0^{\frac{-\ell}{\sin{\theta}}} |\phi_{\mathfrak{b},\ell}(r\sin{\theta})|^2 r \, dr \, d\theta \\
				= & (1+\mathcal{O}(\delta^{\infty}))\norm{\eta_+}^2_{L^2(\R_+)}+ \mathcal{O}(\delta^3)\\
				& + \frac{\delta}{2}\paren*{\int_0^{+\infty} |\phi_{\mathfrak{b}}(t)|^2 t \, dt  +\int^0_{-\infty}  |\phi_{\mathfrak{b}}(t)|^2 t \, dt } ,
			\end{split}
		\end{equation*}
		where we have used \eqref{Definition gamma, delta, l}, \eqref{Decay of phi_b,l} and \( \frac{1}{\sin^{2}{\theta}} = 1+\mathcal{O}(\delta^2)\) in the two regions considered. Due to the support of \(\Psi\), this finishes the proof. 
	\end{proof}
	\section{Proof of Theorem \ref{Main Theorem}}\label{Section proof main theorem}
	Now we will combine the previous computations to prove Theorem \ref{Main Theorem}. By Lemma \ref{Lemma energy Psi (tr)} and Lemma \ref{Lemma L2 norm} we have, as \(\delta\rightarrow 0^+\), 
	\begin{equation*}
		\begin{split}
			\mathcal{Q}_{\mathfrak{b}, \delta} (\Psi^{tr}) - \beta_{\mathfrak{b}} \norm{\Psi^{tr}}^2_{L^2(\R^2)}
			= &\int_{\R^2} |\eta|^2 |(\nabla - i\sigma_{\mathfrak{b}, \delta} \mathcal{A}) \Psi |^2 \, dx   \\
			& + 2\norm{\phi_{\mathfrak{b}}}^2_{L^2(\R)} \int_{\R_+} |\eta'(x_1)|^2 \, dx_1 \\
			& -  (1+\mathcal{O}(\delta^{\infty}))2\beta_{\mathfrak{b}} \norm{\eta_+}^2_{L^2(\R_+)} \\
			&  -  \delta \beta_{\mathfrak{b}} \int_{-\infty}^{+\infty} |\phi_{\mathfrak{b}}(t)|^2 t \, dt  +\mathcal{O}(\delta^3).  
		\end{split}
	\end{equation*}
	By Lemma \ref{Lemma 1} and the support of \(\Psi^{tr}\) we know that
	\begin{equation*}
		\begin{multlined}
			\int_{\R^2} |\eta|^2 |(\nabla - i\sigma_{\mathfrak{b}, \delta} \mathcal{A}) \Psi |^2 \, dx  \leq  2 \int_{T^+} |(\nabla - i\sigma_{\mathfrak{b}, \delta} \mathcal{A}) \Psi |^2 \, dx\\
			+ \int_{V^+_2} |(\nabla - ib_2 \mathcal{A}) \Psi |^2 \, dx  + \int_{V^-_2} |(\nabla - ib_2 \mathcal{A}) \Psi |^2 \, dx \\
			+ \int_{V^+_1} |(\nabla - ib_1 \mathcal{A}) \Psi |^2 \, dx + \int_{V^-_1} |(\nabla - ib_1 \mathcal{A}) \Psi |^2 \, dx ,
		\end{multlined}
	\end{equation*}
	where we have used that \(\abs{\eta(x)}\leq 1 \) for any \(x\in \R^2\). Thus, by Lemmas \ref{Lemma 4}, \ref{Lemma 5}, \ref{Lemma 7} and \ref{Lemma 9}, we get
	\begin{equation}\label{Eq 6.1}
		\begin{split}
			\mathcal{Q}_{\mathfrak{b}, \delta} (\Psi^{tr}) - \beta_{\mathfrak{b}} \norm{\Psi^{tr}}^2_{L^2(\R^2)}  \leq  & \, \delta \int_{0}^{\infty} \paren*{|\phi'_{\mathfrak{b}}(t)|^2 + (b_2t +\xi_{\mathfrak{b}})^2\abs{\phi_{\mathfrak{b}}(t)}^2} t\, dt\\ 
			& \, +\delta \int_{-\infty}^{0} \paren*{|\phi'_{\mathfrak{b}}(t)|^2 + (b_1t +\xi_{\mathfrak{b}})^2\abs{\phi_{\mathfrak{b}}(t)}^2} t\, dt\\
			&\, -  \delta\int_0^{\infty}  (b_2t+\xi_{\mathfrak{b}}) (b_2t+2\xi_{\mathfrak{b}}) |\phi_{\mathfrak{b}}(t)|^2t\, dt \\
			& \,-\delta\int_{-\infty}^0(b_1t + \xi_{\mathfrak{b}})(b_1t+2\xi_{\mathfrak{b}})|\phi_{\mathfrak{b}}(t)|^2t\, dt \\
			&\,   -\delta \beta_{\mathfrak{b}}  \int_{-\infty}^{+\infty} |\phi_{\mathfrak{b}}(t)|^2 \, t \, dt+2\norm{\eta'_+}^2_{L^2(\R_+)} \\
			& \, +\mathcal{O}(\delta^{\frac{3}{2}}) + \mathcal{O}(\delta^{\infty}) \norm{\eta_+}^2 _{H^1(\R_+)}, 
		\end{split}
	\end{equation}
	Note, in particular, that the two terms \(2\beta_{\mathfrak{b}}\norm{\eta_+}^2_{L^2(\R_+)}\) cancel out. To finish the proof, we will focus on the terms that have a factor \(\delta\). 
	\begin{equation}\label{J definition}
		J(\mathfrak{b}):= J_1 + J_2 +J_3,
	\end{equation}
	with 
	\begin{equation}\label{J_1}
		J_1 := - \beta_{\mathfrak{b}} \int_{-\infty}^{+\infty} |\phi_{\mathfrak{b}}(t)|^2 t dt
	\end{equation}
	\begin{equation}\label{J_2}
		\begin{split}
			J_2  := &\int_{0}^{\infty} \paren*{|\phi'_{\mathfrak{b}}(t)|^2 + (b_2t +\xi_{\mathfrak{b}})^2\abs{\phi_{\mathfrak{b}}(t)}^2} t\, dt\\
			& + \int_{-\infty}^{0} \paren*{|\phi'_{\mathfrak{b}}(t)|^2 + (b_1t +\xi_{\mathfrak{b}})^2\abs{\phi_{\mathfrak{b}}(t)}^2} t\, dt
		\end{split}
	\end{equation}
	\begin{equation}\label{J_3}
		\begin{split}
			J_3 := & -  \int_0^{+\infty} (b_2t+\xi_{\mathfrak{b}}) (b_2t+2\xi_{\mathfrak{b}}) |\phi_{\mathfrak{b}} (t)|^2 t\, dt \\
			& - \int_{-\infty}^0(b_1t + \xi_{\mathfrak{b}})(b_1t+2\xi_{\mathfrak{b}})| \phi_{\mathfrak{b}}(t)|^2 t \, dt . 
		\end{split}
	\end{equation}
	
	\begin{lemma} It holds that
		\begin{equation}\label{Eq 9.4}
			J_2 = \beta_{\mathfrak{b}} \int_{-\infty}^{\infty}    |\phi_{\mathfrak{b}}(t)|^2t\, dt  .
		\end{equation}
		In particular, \(J_1 = -J_2\). 
	\end{lemma}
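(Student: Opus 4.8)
The plan is to read $J_2$ as a first-moment (virial-type) quantity attached to the one-dimensional fiber operator and to evaluate it by testing the eigenvalue equation against $t\,\phi_{\mathfrak{b}}$. Recall from Remark~\ref{Remark b cases} that $\phi_{\mathfrak{b}}$ is the real-valued, $L^2$-normalized ground state of $\mathfrak{h}_{\mathfrak{b}}[\xi_{\mathfrak{b}}]$, so that
\[
-\phi_{\mathfrak{b}}''(t) + V_{\mathfrak{b}}(\xi_{\mathfrak{b}}, t)\,\phi_{\mathfrak{b}}(t) = \beta_{\mathfrak{b}}\,\phi_{\mathfrak{b}}(t), \qquad t \in \R\setminus\{0\},
\]
with $\phi_{\mathfrak{b}}$ and $\phi_{\mathfrak{b}}'$ continuous at the origin (the matching condition $\phi_{\mathfrak{b}}'(0_+)=\phi_{\mathfrak{b}}'(0_-)$) and both decaying exponentially, together with all their moments being finite (see Appendix~\ref{AppendixB}). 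Since $V_{\mathfrak{b}}(\xi_{\mathfrak{b}}, t)$ equals $(b_1 t + \xi_{\mathfrak{b}})^2$ for $t<0$ and $(b_2 t + \xi_{\mathfrak{b}})^2$ for $t>0$, the definition \eqref{J_2} of $J_2$ can be rewritten compactly as
\[
J_2 = \int_{-\infty}^{+\infty}\paren*{\,|\phi_{\mathfrak{b}}'(t)|^2 + V_{\mathfrak{b}}(\xi_{\mathfrak{b}}, t)\,|\phi_{\mathfrak{b}}(t)|^2\,}\,t\, dt.
\]

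Then I would multiply the eigenvalue equation by $t\,\phi_{\mathfrak{b}}(t)$ and integrate, splitting $\R$ into the two half-lines $(-\infty,0)$ and $(0,+\infty)$. On each half-line an integration by parts gives
\[
\int -\phi_{\mathfrak{b}}''\, t\,\phi_{\mathfrak{b}}\, dt = \bigl[-\phi_{\mathfrak{b}}'\, t\,\phi_{\mathfrak{b}}\bigr] + \int \phi_{\mathfrak{b}}'\,(\phi_{\mathfrak{b}} + t\,\phi_{\mathfrak{b}}')\, dt,
\]
where the endpoint contributions vanish: at $t=0$ because of the explicit prefactor $t$ (and the continuity of $\phi_{\mathfrak{b}}'$), and at $\pm\infty$ because of the exponential decay of $\phi_{\mathfrak{b}}$ and $\phi_{\mathfrak{b}}'$. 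Summing the two half-line contributions and using
\[
\int_{-\infty}^{+\infty}\phi_{\mathfrak{b}}'(t)\,\phi_{\mathfrak{b}}(t)\, dt = \frac12\int_{-\infty}^{+\infty}\bigl(\phi_{\mathfrak{b}}^2\bigr)'(t)\, dt = 0,
\]
I obtain $\int_{\R} -\phi_{\mathfrak{b}}''\, t\,\phi_{\mathfrak{b}}\, dt = \int_{\R} t\,|\phi_{\mathfrak{b}}'(t)|^2\, dt$. Substituting this into the integrated eigenvalue equation leaves
\[
\int_{-\infty}^{+\infty} t\,|\phi_{\mathfrak{b}}'(t)|^2\, dt + \int_{-\infty}^{+\infty} V_{\mathfrak{b}}(\xi_{\mathfrak{b}},t)\,|\phi_{\mathfrak{b}}(t)|^2\, t\, dt = \beta_{\mathfrak{b}}\int_{-\infty}^{+\infty}|\phi_{\mathfrak{b}}(t)|^2\, t\, dt,
\]
i.e.\ precisely $J_2 = \beta_{\mathfrak{b}}\int_{-\infty}^{+\infty}|\phi_{\mathfrak{b}}(t)|^2\, t\, dt$, which is \eqref{Eq 9.4}. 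Comparing with the definition \eqref{J_1} of $J_1$ then yields $J_1 = -J_2$ immediately.

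There is no serious obstacle here: the argument is a short virial identity. The only point needing a little care is the integration by parts across the jump of $V_{\mathfrak{b}}$ at $t=0$; since the test function $t\,\phi_{\mathfrak{b}}(t)$ vanishes at the origin and $\phi_{\mathfrak{b}}'$ is continuous there, no interface term appears, and the jump of $\phi_{\mathfrak{b}}''$ is harmless because it is only an $L^1_{\mathrm{loc}}$ matter. Finiteness of all the integrals involved is guaranteed by the exponential decay of $\phi_{\mathfrak{b}}$ and $\phi_{\mathfrak{b}}'$ recorded in Remark~\ref{Remark b cases}, so every manipulation above is legitimate.
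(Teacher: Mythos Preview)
Your proof is correct and follows essentially the same approach as the paper: integrate by parts to replace $t|\phi_{\mathfrak{b}}'|^2$ by $-t\phi_{\mathfrak{b}}''\phi_{\mathfrak{b}}$ and then invoke the eigenvalue equation. The only cosmetic difference is that the paper works on each half-line separately and records the boundary contributions $\pm\tfrac{1}{2}\phi_{\mathfrak{b}}(0)^2$ coming from $\int\phi_{\mathfrak{b}}'\phi_{\mathfrak{b}}\,dt$ on $(0,\infty)$ and $(-\infty,0)$ before observing that they cancel, whereas you absorb this cancellation in the single identity $\int_{\R}\phi_{\mathfrak{b}}'\phi_{\mathfrak{b}}\,dt=0$.
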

	\begin{proof}
		Integrating by parts we get
		\[ \int_0^{+\infty} |\phi'_{\mathfrak{b}} (t)|^2 t dt  =\int_0^{+\infty} (t(-\phi''_{\mathfrak{b}} (t)) \phi_{\mathfrak{b}} (t)  - \phi'_{\mathfrak{b}}(t) \phi_{\mathfrak{b}}(t) )\, dt . \]
		Using that, for \(t>0\), \( -\phi''_{\mathfrak{b}} (t) = \beta_{\mathfrak{b}}  \phi_{\mathfrak{b}}(t) -(b_2t+\xi_{\mathfrak{b}})^2\phi_{\mathfrak{b}} (t)\) we obtain
		\begin{equation}\label{Eq 9.5}
			\int_0^{+\infty} |\phi'_{\mathfrak{b}}(t)|^2 t \,  dt =\int_0^{+\infty} (\beta_{\mathfrak{b}}   - (b_2t+\xi_{\mathfrak{b}})^2)|\phi_{\mathfrak{b}} (t)|^2 \, t \, dt+\frac{\phi_{\mathfrak{b}}(0)^2}{2} .
		\end{equation}
		Similarly, 
		\[\int_{-\infty}^0 |\phi_{\mathfrak{b}}'(t)|^2 t\, dt =\int_{-\infty}^0 \bigg [ t(-\phi''_{\mathfrak{b}} (t)) \phi_{\mathfrak{b}} (t)  -  \paren*{\frac{\phi_{\mathfrak{b}}(t)^2}{2}}'\bigg ] \, dt \]
		For \(t<0\) we have  \( -\phi''_{\mathfrak{b}} (t) = \beta_{\mathfrak{b}} \phi_{\mathfrak{b}}(t) -(b_1t+\xi_{\mathfrak{b}})^2\phi_{\mathfrak{b}} (t)\)
		\begin{equation}\label{Eq 9.6}
			\int_{-\infty}^0 |\phi'_{\mathfrak{b}}(t)|^2 t dt = \int_{-\infty}^0 (\beta_{\mathfrak{b}}   - (b_1t+\xi_{\mathfrak{b}})^2)|\phi_{\mathfrak{b}} (t)|^2 t dt-\frac{\phi_{\mathfrak{b}}(0)^2}{2} .
		\end{equation}
		Plugging \eqref{Eq 9.5} and \eqref{Eq 9.6} into \eqref{J_2} we get the result. 
	\end{proof}
	\begin{lemma}\label{Lemma J_3} It holds that
		\begin{equation}\label{Eq 9.7}
			J_3 =    -M_3(\mathfrak{b})+\xi_{\mathfrak{b}}^2 M_1(\mathfrak{b}) ,
		\end{equation}
		where \( M_n(\mathfrak{b})\) was defined in \eqref{Moment definition}. 
	\end{lemma}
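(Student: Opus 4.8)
The plan is to reduce the identity to a purely algebraic manipulation of the integrands, using the fact that $\sigma_{\mathfrak{b}}$ is constant on each half-line: $\sigma_{\mathfrak{b}}(t)=b_2$ for $t>0$ and $\sigma_{\mathfrak{b}}(t)=b_1$ for $t<0$. Splitting the definition \eqref{Moment definition} of $M_1(\mathfrak{b})$ and $M_3(\mathfrak{b})$ at the origin, the right-hand side $-M_3(\mathfrak{b})+\xi_{\mathfrak{b}}^2 M_1(\mathfrak{b})$ becomes a sum of two half-line integrals, the one over $\R_+$ having integrand $-\frac{1}{b_2}\big[(\xi_{\mathfrak{b}}+b_2 t)^3-\xi_{\mathfrak{b}}^2(\xi_{\mathfrak{b}}+b_2 t)\big]|\phi_{\mathfrak{b}}(t)|^2$, and symmetrically with $b_1$ over $\R_-$. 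Comparing with \eqref{J_3}, it therefore suffices to match these integrands against the integrands of $J_3$.

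The key step is the elementary factorization valid for any real $s\neq 0$ and any real $\xi$,
\[
(st+\xi)(st+2\xi)\,t=\frac{1}{s}\big[(\xi+st)^3-\xi^2(\xi+st)\big],
\]
which one verifies by writing $(\xi+st)^3-\xi^2(\xi+st)=(\xi+st)\big[(\xi+st)^2-\xi^2\big]=(\xi+st)(st)(st+2\xi)$ and dividing by $s$. Applying this with $s=b_2$, $\xi=\xi_{\mathfrak{b}}$ on $\R_+$ and with $s=b_1$, $\xi=\xi_{\mathfrak{b}}$ on $\R_-$ transforms the integrand of $J_3$ in \eqref{J_3} into exactly $-\frac{1}{\sigma_{\mathfrak{b}}(t)}(\xi_{\mathfrak{b}}+\sigma_{\mathfrak{b}}(t)t)^3|\phi_{\mathfrak{b}}(t)|^2+\xi_{\mathfrak{b}}^2\frac{1}{\sigma_{\mathfrak{b}}(t)}(\xi_{\mathfrak{b}}+\sigma_{\mathfrak{b}}(t)t)|\phi_{\mathfrak{b}}(t)|^2$. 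Integrating over $\R$ and recognizing \eqref{Moment definition} with $n=3$ and $n=1$ yields $J_3=-M_3(\mathfrak{b})+\xi_{\mathfrak{b}}^2 M_1(\mathfrak{b})$.

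There is essentially no obstacle here; the only points requiring care are bookkeeping the sign in front of $J_3$ and the $\frac{1}{\sigma_{\mathfrak{b}}}$ weight consistently on the two half-lines. Absolute convergence of all integrals above is guaranteed by the exponential decay of $\phi_{\mathfrak{b}}$ and $\phi_{\mathfrak{b}}'$ recalled in Remark \ref{Remark b cases}, so no regularization is needed. Note that the formula is stated without invoking \eqref{First moment}; combined with $M_1(\mathfrak{b})=0$ it of course simplifies to $J_3=-M_3(\mathfrak{b})$.
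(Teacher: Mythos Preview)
Your proof is correct and follows essentially the same route as the paper: both rely on the algebraic identity $-t(b_jt+\xi_{\mathfrak{b}})(b_jt+2\xi_{\mathfrak{b}}) = -\frac{1}{b_j}(b_jt+\xi_{\mathfrak{b}})^3 + \frac{\xi_{\mathfrak{b}}^2}{b_j}(b_jt+\xi_{\mathfrak{b}})$ on each half-line, after which the result is immediate from the definition of the moments. Your version is slightly more explicit in verifying the factorization and in remarking on absolute convergence, but the argument is the same.
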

	\begin{proof}
		Observe that for \(j\in \{1,2\}\)
		\[-t(b_jt+\xi_{\mathfrak{b}}) (b_jt+2\xi_{\mathfrak{b}}) = -\frac{1}{b_j}(b_jt+\xi)^3 + \frac{\xi_{\mathfrak{b}}^2}{b_j}\paren*{b_jt+\xi_{\mathfrak{b}}} .\]
		Hence, 
		\begin{equation*}
			\begin{split}
				J_3  = &  \int_0^{+\infty} \paren*{-\frac{1}{b_2}(b_2t+\xi)^3 + \frac{\xi_{\mathfrak{b}}^2}{b_2}\paren*{b_2t+\xi_{\mathfrak{b}}}}|\phi_{\mathfrak{b}} (t)|^2 \, dt \\ 
				& + \int_{-\infty}^0\paren*{-\frac{1}{b_1}(b_1t+\xi)^3 + \frac{\xi_{\mathfrak{b}}^2}{b_1}\paren*{b_1t+\xi_{\mathfrak{b}}}}| \phi_{\mathfrak{b}}(t)|^2 \,  dt\\
				= & -M_3(\mathfrak{b}) +\xi_{\mathfrak{b}}^2 M_1(\mathfrak{b})        \qedhere       
			\end{split}
		\end{equation*} 
		
	\end{proof}
	Collecting the previous lemmas, we see that
	\begin{equation}
		J(\mathfrak{b})= -M_3(\mathfrak{b}) +\xi_{\mathfrak{b}}^2 M_1(\mathfrak{b}) .
	\end{equation}
	Having this in mind, we rewrite \eqref{Eq 6.1} as
	\begin{equation}\label{Rayleigh quotient not simplified}
		\begin{split}
			\frac{\mathcal{Q}_{\mathfrak{b},\delta}(\Psi^{tr})}{\norm{\Psi^{tr}}^2_{L^2(\R^2)}} \leq & \beta_{\mathfrak{b}} + \frac{1}{\norm{\Psi^{tr}}^2_{L^2(\R^2)}}\bigg(\delta  J(\mathfrak{b}) +2\norm{\eta'_+}^2_{L^2(\R_+)} \\
			&+ \mathcal{O}(\delta^{\frac{3}{2}}) + \mathcal{O}(\delta^{\infty})\norm{\eta_+}^2_{H^1(\R_+)}\bigg). 
		\end{split}
	\end{equation}
	The strategy is proving that \(J(\mathfrak{b}) < 0\) for \(b_2\in (-1,0)\) and \(b_1 = 1\), and later see that the other terms in \eqref{Rayleigh quotient not simplified} are small. Using  \cite[Proposition 4.1]{Moments} we have
	\begin{enumerate}
		\item If \(b_1\in (-1, 0)\) and \(b_2=1\),
		\begin{equation*}
			M_1(\mathfrak{b}) = 0 \text{ and } M_3(\mathfrak{b})= \frac{1}{3}\underbrace{\paren*{\frac{1}{b_1}-1}}_{<0} \underbrace{\xi_{ \mathfrak{b}}}_{<0} \underbrace{\phi_{\mathfrak{b}}(0)}_{>0} \underbrace{\phi'_{\mathfrak{b}}(0)}_{<0}< 0, 
		\end{equation*}
		which means 
		\begin{equation}\label{J case b=1}
			J(\mathfrak{b}) >0,
		\end{equation}
		and this is why our construction fails in this case. 
		
		\item If \(b_1= 1\) and \(b_2\in (-1, 0)\), we need the following lemma.
		\begin{lemma}\label{Moment relation when change sign}
			\begin{equation}
				M_n(\mathfrak{b}) = (-1)^n M_n(\tilde{\mathfrak{b}}), 
			\end{equation}
			with \(\mathfrak{b} = (b_1, b_2)\) and \(\tilde{\mathfrak{b}}=(b_2, b_1)\). 
		\end{lemma}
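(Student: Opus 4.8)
The plan is to exploit the reflection $t\mapsto -t$ at the level of the fiber operators $\mathfrak{h}_{\mathfrak{b}}[\xi]$ from \eqref{1-D operator}. Let $R$ be the unitary involution on $L^2(\R)$ given by $(Ru)(t)=u(-t)$. First I would check directly from the piecewise definition of the potential that $V_{\mathfrak{b}}(\xi,-t)=V_{\tilde{\mathfrak{b}}}(-\xi,t)$ for all $t\neq 0$ (for $t>0$ one has $-t<0$, so $V_{\mathfrak{b}}(\xi,-t)=(b_1(-t)+\xi)^2=(b_1t-\xi)^2$, which is the $t>0$ branch of $V_{\tilde{\mathfrak{b}}}(-\xi,\cdot)$, and symmetrically for $t<0$). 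Since $R$ commutes with $-d^2/dt^2$, preserves $B^1(\R)$, and sends the transmission condition $u'(0_+)=u'(0_-)$ to the same condition for $Ru$, this yields the operator identity $R\,\mathfrak{h}_{\mathfrak{b}}[\xi]\,R=\mathfrak{h}_{\tilde{\mathfrak{b}}}[-\xi]$.

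From this unitary equivalence I obtain $\mu_{\mathfrak{b}}(\xi)=\mu_{\tilde{\mathfrak{b}}}(-\xi)$ for every $\xi\in\R$. Because $\xi\mapsto\mu_{\tilde{\mathfrak{b}}}(\xi)$ has a unique non-degenerate minimizer in the trapping case (Remark \ref{Remark b cases}), comparing the two functions forces $\xi_{\tilde{\mathfrak{b}}}=-\xi_{\mathfrak{b}}$; hence $R\,\mathfrak{h}_{\mathfrak{b}}[\xi_{\mathfrak{b}}]\,R=\mathfrak{h}_{\tilde{\mathfrak{b}}}[\xi_{\tilde{\mathfrak{b}}}]$. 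Applying $R$ to the (simple, real, $L^2$-normalized) ground state $\phi_{\mathfrak{b}}$ produces a normalized ground state of $\mathfrak{h}_{\tilde{\mathfrak{b}}}[\xi_{\tilde{\mathfrak{b}}}]$, so by simplicity $\phi_{\tilde{\mathfrak{b}}}(t)=\pm\phi_{\mathfrak{b}}(-t)$; the sign is irrelevant since only $\abs{\phi}^2$ enters the moments.

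Finally I would substitute into the definition \eqref{Moment definition} of $M_n(\tilde{\mathfrak{b}})$ and change variables $t\mapsto -t$. Using $\sigma_{\tilde{\mathfrak{b}}}(t)=\sigma_{\mathfrak{b}}(-t)$ (immediate from $\sigma_{\mathfrak{b}}(t)=b_1\mathbbm{1}_{\R_-}(t)+b_2\mathbbm{1}_{\R_+}(t)$), together with $\xi_{\tilde{\mathfrak{b}}}=-\xi_{\mathfrak{b}}$ and $\abs{\phi_{\tilde{\mathfrak{b}}}(t)}^2=\abs{\phi_{\mathfrak{b}}(-t)}^2$, the integrand transforms into $\frac{1}{\sigma_{\mathfrak{b}}(s)}\bigl(-\xi_{\mathfrak{b}}-\sigma_{\mathfrak{b}}(s)s\bigr)^n\abs{\phi_{\mathfrak{b}}(s)}^2$; pulling out the factor $(-1)^n$ gives $M_n(\tilde{\mathfrak{b}})=(-1)^nM_n(\mathfrak{b})$, which is the asserted identity since $(-1)^n=(-1)^{-n}$. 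No step here is genuinely hard; the only points requiring a little care are the behaviour of the transmission condition under $R$ and the use of simplicity of $\mu_{\mathfrak{b}}(\xi_{\mathfrak{b}})$ to transport the eigenfunction, and both are routine given the facts recalled in Section~\ref{Section 2}.
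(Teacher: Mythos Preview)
Your proof is correct and follows essentially the same approach as the paper: the change of variables $t\mapsto -t$ combined with the relations $\sigma_{\tilde{\mathfrak{b}}}(t)=\sigma_{\mathfrak{b}}(-t)$, $\xi_{\tilde{\mathfrak{b}}}=-\xi_{\mathfrak{b}}$, and $\abs{\phi_{\tilde{\mathfrak{b}}}(t)}^2=\abs{\phi_{\mathfrak{b}}(-t)}^2$. The paper's proof is considerably terser, simply writing down the substitution and using these identities without justification, whereas you supply the missing argument (unitary conjugation of the fiber operators, transport of the minimizer and ground state via simplicity) that makes the computation legitimate.
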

		\begin{proof}
			Let \(s=-t\), then
			\begin{equation*}
				\begin{split}
					M_n(\mathfrak{b})& =  \int_{-\infty}^{+\infty} \frac{1}{\sigma_{\mathfrak{b}}(t)} (\xi_{\mathfrak{b}} + t)^n \abs{\phi_{\mathfrak{b}}(t)}^2 \,  dt\\
					& =\int_{-\infty}^{+\infty} \frac{1}{\sigma_{\tilde{\mathfrak{b}}}(-t)} (-\xi_{ \tilde{\mathfrak{b}}} + t)^n \abs{\phi_{\tilde{\mathfrak{b}}}(-t)}^2 \, dt \\
					& = (-1)^n \int_{-\infty}^{+\infty} \frac{1}{\sigma_{ \tilde{\mathfrak{b}}}(s)} (\xi_{ \tilde{\mathfrak{b}}} + s)^n \abs{\phi_{\tilde{\mathfrak{b}}}(s)}^2 \, ds \\
					& =  (-1)^n M_n(\tilde{\mathfrak{b}}).  \qedhere
				\end{split}
			\end{equation*}
		\end{proof}
		This lemma implies that \(M_1( \mathfrak{b}) = 0 \text{ and } M_3( \mathfrak{b})< 0\), which means 
		\begin{equation}\label{J case a=1}
			J(\mathfrak{b}) < 0,
		\end{equation}
		for \(\mathfrak{b} = (1, b_2) \) with \(b_2 \in (-1,0)\). This result is key because it is telling us in which case we will be able to go below \(\beta_{\mathfrak{b}}\) with our trial state. 
	\end{enumerate}
	Now that we know the sign of \(J(\mathfrak{b})\), we would like to make a right choice of \(\eta_+\)  to ensure the norm of \(\eta_+\) is not too large. By \eqref{L2 norm} we have
	\begin{equation*}
		\int_{\R^2}|\Psi^{tr}|^2 \, dx = (1+\mathcal{O}(\delta^{\infty}))2 \norm{\eta_+}^2_{L^2(\R_+)}+ \delta  \int_{-\infty}^{+\infty} |\phi_{\mathfrak{b}}(t)|^2 t dt  +\mathcal{O}(\delta^3),  
	\end{equation*}
	which, since \(\norm{\eta_+}_{L^2(\R_+)} \geq \epsilon\), gives
	\begin{equation*}
		\frac{1}{\norm{\Psi^{tr}}^2_{L^2(\R^2)}} = 
	\end{equation*}	
	\begin{equation}\label{Expansion 1/norm(psi)} 
		= \frac{1}{ 2\norm{\eta_+}^2_{L^2(\R_+)}}\paren*{1-\delta\frac{   \int_{-\infty}^{+\infty} |\phi_{\mathfrak{b}}(t)|^2 t dt }{\norm{\eta_+}^2_{L^2(\R_+)}} + \mathcal{O}\paren*{\frac{\delta^2}{\norm{\eta_+}_{L^2(\R_+)}^4}}+ \mathcal{O}(\delta^3)},
	\end{equation}
	and plugging this into \eqref{Rayleigh quotient not simplified} gives
	\begin{equation}\label{Rayleigh before choice of eta}
		\begin{split}
			\frac{\mathcal{Q}_{\mathfrak{b},\delta}(\Psi^{tr})}{\norm{\Psi^{tr}}^2_{L^2(\R^2)}}  \leq &  \, \beta_{\mathfrak{b}} +\frac{1}{ 2\norm{\eta_+}^2_{L^2(\R_+)}}\paren*{-\delta  M_3(\mathfrak{b}) +2\norm{\eta'_+}^2_{L^2(\R_+)}} \\
			& +  \frac{M_3(\mathfrak{b})\int_{-\infty}^{+\infty} |\phi_{\mathfrak{b}}(t)|^2 t dt }{\norm{\eta_+}^2_{L^2(\R_+)}} \delta^2 - \frac{\norm{\eta'_+}^2_{L^2(\R_+)}\int_{-\infty}^{+\infty} |\phi_{\mathfrak{b}}(t)|^2 t dt }{\norm{\eta_+}^4_{L^2(\R_+)}}\delta\\
			& + \mathcal{O}\paren*{R_{\delta}(\eta_+)},
		\end{split}
	\end{equation}
	where
	\begin{equation*}
		\begin{split}
			R_{\delta}(\eta_+)  = &  \frac{\delta^{\frac{3}{2}}}{\norm{\eta_+}^2_{L^2(\R_+)}} + \frac{\delta^2\norm{\eta'_+}^2_{L^2(\R_+)}}{\norm{\eta_+}^6_{L^2(\R_+)}} + \frac{\delta^{\frac{5}{2}}}{\norm{\eta_+}^4_{L^2(\R_+)}} \\
			& +\frac{\delta^3}{\norm{\eta_+}^6_{L^2(\R_+)}}+\frac{\delta^3 \norm{\eta'_+}^2_{L^2(\R_+)}}{\norm{\eta_+}^2_{L^2(\R_+)}} \\
			&+  \frac{\delta^{\infty}\norm{\eta_+}^2_{H^1(\R_+)}}{{\norm{\eta_+}^2_{L^2(\R_+)}}} \paren*{1+\frac{1}{\norm{\eta_+}^2_{L^2(\R_+)}}+\frac{1}{\norm{\eta_+}^4_{L^2(\R_+)}}}. 
		\end{split}  
	\end{equation*}
	
	Let \(\eta_+\) be defined as\footnote{See \cite[Section 3.5.2]{Almostflat} for a more detailed reasoning of this choice.}
	\begin{equation}\label{Choice eta}
		\eta_+(x) := \begin{cases}
			1 & \text{ if } x\in (-\epsilon, \epsilon)\\
			\exp{\frac{-M_3(\mathfrak{b})\delta}{2}(\epsilon - x)} & \text{ if } x \geq \epsilon
		\end{cases},
	\end{equation}
	which gives
	\begin{equation*}
		\norm{\eta_+}^2_{L^2(\R_+)} = \epsilon +\frac{1}{M_3(\mathfrak{b}) \delta} \text{ and } \norm{\eta'_+}^2_{L^2(\R_+)} = \frac{M_3(\mathfrak{b}) \delta}{4}. 
	\end{equation*}
	Note that with this choice
	\begin{equation}\label{Estimate delta^2 term}
		\frac{-\delta  M_3(\mathfrak{b}) +2\norm{\eta'_+}^2_{L^2(\R_+)}}{ 2\norm{\eta_+}^2_{L^2(\R_+)}} = -\frac{ M_3 (\mathfrak{b})^2 }{4}\delta^2 +\mathcal{O}(\delta^3),
	\end{equation}
	and the other terms in \eqref{Rayleigh before choice of eta} would be of higher order than \(\delta^2\). Combining \eqref{Rayleigh quotient not simplified}, \eqref{Expansion 1/norm(psi)}, \eqref{Choice eta} and  \eqref{Estimate delta^2 term}, we get
	\begin{equation}\label{Final equation}
		\frac{\mathcal{Q}_{\mathfrak{b},\delta}(\Psi^{tr})}{\norm{\Psi^{tr}}^2_{L^2(\R^2)}} \leq \beta_{\mathfrak{b}} - \delta^2\frac{M_3(\mathfrak{b})^2}{4} + o(\delta^2), 
	\end{equation}
	which finishes the proof.
	\section{Applications}\label{Section applications}
	In this section we will present a setting where \(\mathcal{L}_{\mathfrak{b}, \delta}\) can be applied as an effective model operator. We will borrow some notation from~\cite{FournaisHelfferbook}. Let \(\Omega \subset \R^2\) be an open, bounded, simply connected and smooth set. Moreover, assume that \(\Omega = \text{int} \paren{\overline{\Omega}_1 \cup \overline{\Omega}_2}\), where \(\Omega_1\) and \(\Omega_2\) are open sets such that \(\Omega_1\cap \Omega_2\) can be described as
	\begin{equation}\label{Parametrization}
		\bigg \{(x_1, x_2) \in (-L_- , 0) \times \R_- : x_1 = \tan(\frac{\pi}{2}-\delta) x_2 \bigg \} \cup \paren*{(0, L_+) \times \{0\}}, 
	\end{equation}
	where \(L_-\) and \( L_+\) are positive constants. A typical such example is illustrated in Figure \ref{Figure3}.
	
	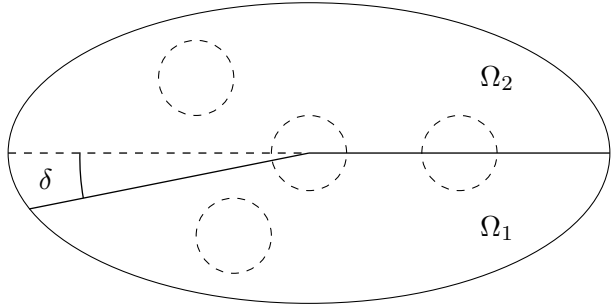
\begin{figure}[ht!]
		\begin{tikzpicture}
			\draw (0,0) ellipse (4cm and 2cm);
			\draw [line width=0.5pt, domain=0:4] plot(\x,0); 
			\draw [dashed, line width=0.5pt, domain=-4:0] plot(\x,0); 
			\draw [line width=0.5pt, domain=-3.72:0] plot(\x,\x*0.2); 
			\draw [line width=0.5pt] (-3,-0.6) arc (190: 178.5: 3.05); 
			\draw[color= black] (-3.5, -0.35) node {\(\delta\)}; 
			\draw[color= black] (2.5, -1) node {\(\Omega_1\)}; 
			\draw[color= black] (2.5, 1) node {\(\Omega_2\)}; 
			\draw[dashed, color= black] (0,0) circle (0.5cm); 
			\draw[dashed, color= black] (-1,-1.1) circle (0.5cm); 
			\draw[dashed, color= black] (-1.5,1) circle (0.5cm); 
			\draw[dashed, color= black] (2,0) circle (0.5cm); 
		\end{tikzpicture}
		\caption{Example of a domain fulfilling the assumptions stated before. We also show the four type of region where we will localize the operator.}\label{Figure3}
	\end{figure}
	In order to apply Theorem \ref{Main Theorem} so that \(\lambda_{\mathfrak{b}}(\delta)\) is an eigenvalue for \(\delta\) small enough, we let \(\mathfrak{b} = (1, b)\) with \(b \in(-1, 0)\), and \(\mathcal{A} = \mathcal{A}_{\mathfrak{b}, \delta}\), where \(\mathcal{A}_{\mathfrak{b}, \delta}\) was defined in \eqref{Aa,b,delta definition}. Consider the quadratic form
	\begin{equation}\label{Q^D_B,A definition}
		\mathcal{Q}^D_{B\mathcal{A}, \Omega}(u):= \int_{\Omega} \abs{(\nabla -i B\mathcal{A})u}^2\, dx
	\end{equation}
	with domain \(\mathcal{D}( \mathcal{Q}^D_{B\mathcal{A}, \Omega}) = H_0^1 (\Omega)\) and where \(B\) is a parameter measuring the strength of the magnetic field. The superscript \(D\) is written because what we are studying the Dirichlet realization of the Schr\"odinger operator 
	\begin{equation}
		P_{B\mathcal{A}, \Omega}  := -(\nabla -iB\mathcal{A})^2 
	\end{equation}
	in \(\Omega\). Note that \(\mathcal{Q}^D_{B\mathcal{A}, \Omega}\) is a positive quadratic form, and \(H_0^1(\Omega) = \overline{C_c^{\infty}(\Omega)}^{\norm{\cdot}_{\mathcal{V}^D}} \) with
	\begin{equation*}
		\norm{u}_{\mathcal{V}^D}^2 =   q^D_{B\mathcal{A}, \Omega}(u,u)+\norm{u}^2_{L^2(\Omega)}, 
	\end{equation*}
	where 
	\begin{equation*}
		q^D_{B\mathcal{A}, \Omega}(u,v) := \int_{\Omega} \overline{(\nabla -iB \mathcal{A})u} (\nabla -iB \mathcal{A}) v \, dx , 
	\end{equation*}
	is the sesquilinear form with domain \(H_0^1(\Omega) \times H_0^1(\Omega)\). Due to these reasons, if we consider \(\Tilde{ P}_{B\mathcal{A}, \Omega} u = P_{B\mathcal{A}, \Omega}u\)  for \(u\in \mathcal{D}(\Tilde{ P}_{B\mathcal{A}, \Omega}) = C_c^{\infty}(\Omega)\),  we can consider its Friedrichs extension, \(P^D_{B\mathcal{A}, \Omega}\), which is obtained using Lax--Milgram lemma and Riesz's theorem. We can identify \(P^D_{B\mathcal{A}, \Omega} u\) as 
	\begin{equation}
		\langle P^D_{B\mathcal{A}, \Omega}u, v\rangle_{L^2(\Omega)} =  q^D_{B\mathcal{A}, \Omega}(u, v), \, \forall v \in C_c^{\infty}(\Omega) ,
	\end{equation}
	and this means
	\begin{equation}
		\mathcal{D}(P^D_{B\mathcal{A}, \Omega}) := \{u\in H_0^1(\Omega) : P^D_{B\mathcal{A}, \Omega}u \in L^2(\Omega)\}. 
	\end{equation}
	Recall that for \(u\in  \mathcal{D}(P^D_{B\mathcal{A}, \Omega})\), 
	\begin{equation*}
		P^D_{B \mathcal{A}, \Omega} u = P_{B \mathcal{A}, \Omega}u .
	\end{equation*}
	As usual, one can show that the spectrum of \(P^D_{B \mathcal{A}, \Omega}\) is discrete using the compactness of the inclusion \(H_0^1(\Omega) \hookrightarrow L^2(\Omega)\). We denote by \(\lambda_1^D (B)\) the lowest eigenvalue of  \(P^D_{B \mathcal{A}, \Omega}\). The goal is to study the asymptotics of \(\lambda_1(B)\) when \(B\rightarrow +\infty\). 
	
	Using a change of gauge (combine \eqref{First change of Gauge} and \(\zeta(x_1, x_2) = \frac{x_1x_2}{2}\)) we can reduce the study to the same operator with magnetic potential \(\mathcal{A}_0(x_1, x_2)= \frac{1}{2}(-x_2, x_1)\). Then one can show in a similar way as in \cite[Proposition 3.5]{Bonnthesis} that the bottom of the spectrum of  \(P_{B\mathcal{A}, \Omega}\) remains unchanged if we apply a rotation to \(\Omega\). Because of this, if we rotate \(\Omega\) the statement of the following theorem does not change. 
	
	\begin{theorem}\label{Theorem 2}
		If \(b\in (-1, 0)\), then there exists a \(\delta_0 >0\) such that for all \(\delta<\delta_0\), we can find constants \(C_1, C_2, C_3>0\) such that the smallest eigenvalue of the Dirichlet realization  \(P^D_{B\mathcal{A}, \Omega}\) of  \(P_{B\mathcal{A}, \Omega}\) satisfies as  \(B\rightarrow +\infty\) 
		\begin{equation}\label{Theorem 2 equation}
			-C_1 B^{\rho_1} \leq \lambda_1(B) -  B \lambda_{\mathfrak{b}}(\delta) \leq C_2 e^{-C_3 B^{\rho_2}},
		\end{equation}
		where \(\lambda_{\mathfrak{b}}(\delta)\) was defined just before Theorem \ref{Main Theorem}, \(\rho_1\in(0,1)\) and \(\rho_2\in \paren*{0,\frac{1}{2}}\).  
	\end{theorem}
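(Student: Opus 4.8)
The plan is to prove the two bounds of \eqref{Theorem 2 equation} separately, following the standard scheme for magnetic eigenvalue asymptotics (see \cite{FournaisHelfferbook}): the upper bound via a single trial state concentrated at the corner of the barrier, the lower bound via an IMS partition of unity together with a classification of the local model operators. Throughout, $\delta<\delta_0$ is fixed as in Theorem \ref{Main Theorem}, so that $\lambda_{\mathfrak{b}}(\delta)$ is an eigenvalue of $\mathcal{L}_{\mathfrak{b},\delta}$ strictly below $\beta_{\mathfrak{b}}=\inf\mathrm{Spec}_{\mathrm{ess}}(\mathcal{L}_{\mathfrak{b},\delta})$, and the constants $C_1,C_2,C_3$ are allowed to depend on $\delta,b,\Omega$.

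For the upper bound, let $\psi_\delta$ be an $L^2$-normalized eigenfunction of $\mathcal{L}_{\mathfrak{b},\delta}$ at $\lambda_{\mathfrak{b}}(\delta)$; since the spectral gap $\beta_{\mathfrak{b}}-\lambda_{\mathfrak{b}}(\delta)$ is comparable to $\delta^2$, Agmon estimates give $|\psi_\delta(y)|+|\nabla\psi_\delta(y)|\le Ce^{-c\delta|y|}$. In the coordinates of \eqref{Parametrization} the corner $x_0$ of the barrier is the origin, and on a fixed disc of radius $\rho$ around $x_0$ contained in $\Omega$ the sets $\Omega_1,\Omega_2$ and the barrier coincide \emph{exactly} with $\Omega_{1,\delta},\Omega_{2,\delta}$ and the graph of $g_\delta$; hence there $B\mathcal{A}$ and $B\mathcal{A}_{\mathfrak{b},\delta}$ have the same curl and differ by a gradient $\nabla\omega$. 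Using the scaling $\mathcal{A}_{B\mathfrak{b},\delta}=B\mathcal{A}_{\mathfrak{b},\delta}$ of Remark \ref{Remark b cases}, the $L^2$-normalized state $v_B(x):=B^{1/2}\psi_\delta(B^{1/2}(x-x_0))$ is the ground state of $-(\nabla-iB\mathcal{A}_{\mathfrak{b},\delta}(\cdot-x_0))^2$ at energy $B\lambda_{\mathfrak{b}}(\delta)$. Multiplying by $e^{i\omega}$ and by a cutoff $\theta\in C_c^\infty$ equal to $1$ near $x_0$ and supported in the disc, the trial state $u_B:=\theta\,e^{i\omega}v_B\in H_0^1(\Omega)$ satisfies, by the one-cutoff localization formula and the eigenvalue equation,
\[
\mathcal{Q}^D_{B\mathcal{A},\Omega}(u_B)=B\lambda_{\mathfrak{b}}(\delta)\,\|u_B\|^2_{L^2(\Omega)}+\mathcal{O}\big(e^{-c\delta\rho B^{1/2}}\big),
\]
the remainder being $\int|\nabla\theta|^2|v_B|^2$, which is exponentially small because $\nabla\theta$ vanishes near $x_0$. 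The min-max principle then gives $\lambda_1(B)\le B\lambda_{\mathfrak{b}}(\delta)+C_2 e^{-C_3 B^{\rho_2}}$ for any $\rho_2\in(0,\tfrac12)$.

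For the lower bound, fix $\rho=\tfrac38$ and a partition of unity $(\chi_j)$ on $\overline\Omega$ at scale $B^{-\rho}$ with $\sum_j\chi_j^2=1$ and $\sum_j|\nabla\chi_j|^2\le CB^{2\rho}$, and use the IMS formula to reduce to lower bounds for the pieces $\mathcal{Q}^D_{B\mathcal{A},\Omega}(\chi_j u)$ up to a global error $CB^{2\rho}\|u\|^2$. Extending $\chi_j u$ by zero and matching the (piecewise constant) field by an appropriate model field, one gets: $\mathcal{Q}^D_{B\mathcal{A},\Omega}(\chi_j u)\ge B\|\chi_j u\|^2$ if the cell lies in $\overline{\Omega_1}$ — interior or touching $\partial\Omega$, since for the \emph{Dirichlet} realization the boundary only raises the energy, with no de Gennes reduction; $\ge |b|B\|\chi_j u\|^2$ if it lies in $\overline{\Omega_2}$; $\ge\beta_{\mathfrak{b}}B\|\chi_j u\|^2$ if it crosses the straight barrier away from $x_0$ and $\partial\Omega$, the model being exactly $\mathcal{L}_{\mathfrak{b},0}$; and $\ge\lambda_{\mathfrak{b}}(\delta)B\|\chi_j u\|^2$ if it meets $x_0$, the model being \emph{literally} $\mathcal{L}_{\mathfrak{b},\delta}$, so that there is no approximation error. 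Since $\lambda_{\mathfrak{b}}(\delta)<\beta_{\mathfrak{b}}<|b|<1$ by Remark \ref{Remark b cases} and Theorem \ref{Main Theorem}, every one of these exceeds $\lambda_{\mathfrak{b}}(\delta)B\|\chi_j u\|^2$ — most of them by a margin of order $B$ that swallows the $CB^{2\rho}$ localization error. Summing over $j$ gives $\mathcal{Q}^D_{B\mathcal{A},\Omega}(u)\ge(B\lambda_{\mathfrak{b}}(\delta)-C_1 B^{\rho_1})\|u\|^2$ with $\rho_1=\tfrac34$.

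The cells not yet covered, and the main obstacle, are the finitely many ones meeting the two points where the barrier reaches $\partial\Omega$: there the local model is a magnetic step truncated by a curved Dirichlet boundary, and one must show its bottom is at least $\beta_{\mathfrak{b}}B-\mathcal{O}(B^{3/2-2\rho})=\beta_{\mathfrak{b}}B-\mathcal{O}(B^{3/4})$, which suffices because $\beta_{\mathfrak{b}}>\lambda_{\mathfrak{b}}(\delta)$. Here the Dirichlet condition is essential: reflecting evenly across the boundary tangent turns a Dirichlet state into an \emph{odd} state of a magnetic broken-line model, and one needs the odd-sector bottom of that model to be $\ge\beta_{\mathfrak{b}}$ — it equals $\beta_{\mathfrak{b}}$ when the barrier meets $\partial\Omega$ orthogonally, in which case the reflected model is a straight step, and the general transversal case together with the curvature correction $\mathcal{O}(B^{3/2-2\rho})$ is handled by a boundary-straightening change of coordinates in the rescaled variables (valid because $\rho>\tfrac14$). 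Carrying out this model estimate and weaving it together with the other, routine, model reductions and the IMS bookkeeping is the bulk of the work; the remainder is the standard localization machinery.
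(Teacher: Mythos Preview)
Your approach is the same as the paper's: a rescaled, cut-off ground state of $\mathcal{L}_{\mathfrak{b},\delta}$ for the upper bound, and an IMS localisation with the three local models $\mathcal{L}_{\mathfrak{b},\delta}$, $\mathcal{L}_{\mathfrak{b},0}$, and the constant-field Landau bound for the lower bound. The paper's proof is in fact shorter than your sketch, for two reasons you may want to note.

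First, no gauge phase $e^{i\omega}$ is needed in the trial state: in the setting of Section~\ref{Section applications} the potential on $\Omega$ is \emph{literally} the restriction of $\mathcal{A}_{\mathfrak{b},\delta}$, not merely a potential with the same curl, so the rescaled eigenfunction times a cutoff is already an admissible trial state. The paper takes the cutoff at the $B$-dependent scale $B^{-\rho_2}$ rather than a fixed radius $\rho$; either works.

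Second, and more importantly, the cells where the barrier meets $\partial\Omega$ are \emph{not} an obstacle and require no reflection trick or boundary-straightening. Because the boundary condition is Dirichlet, $\chi_j^B u$ extends by zero to all of $\R^2$, and because the barrier is by hypothesis a straight segment right up to $\partial\Omega$, the extended function lies (after a rotation) in the form domain of $\mathcal{L}_{\mathfrak{b},0}$, with identical quadratic form value. Hence these cells fall under the same $\beta_{\mathfrak{b}}B$ bound as the interior barrier cells, with no curvature error at all. Your concern would be legitimate for a curved barrier or for Neumann conditions (and the paper flags both generalisations in the remarks following Theorem~\ref{Theorem 2}), but in the setting actually treated, the whole ``main obstacle'' paragraph can be deleted, and with it the constraint $\rho>\tfrac14$: any $\rho_1\in(0,1)$ works, as the paper states.
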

	\begin{remark}
		A more general case can be considered by having smooth curves instead of straight lines splitting \(\Omega\) into \(\Omega_1\) and \(\Omega_2\). This case can be treated similarly (obtaining slightly worse error terms), but the proof becomes more technical since we need to introduce Frenet coordinates. Examples of this approach can be found, for example, in \cite{Bonnthesis, FournaisHelfferbook, AssaadBreakdown}. 
		
		Another technical issue of considering a more general case is that magnetic potential cannot be stated explicitly. However, one can use \cite[Lemma A.1]{AssadInfluence}, \cite[Appendix D]{FournaisHelfferbook} and \cite[Appendix C]{AssaadBreakdown} to show the existence of a magnetic potential \(\mathcal{A}\in H^1_{\text{div}}(\Omega)\) satisfying \(\text{curl }\mathcal{A} = \mathbbm{1}_{\Omega_{1} }+ b\mathbbm{1}_{\Omega_{2}}\) and that the domains of \(  \mathcal{Q}^D_{B \mathcal{A}, \Omega}\) and \( P^D_{B \mathcal{A}, \Omega}\) do not depend on \(b\). This would be useful to prove Theorem \ref{Monotonicity theorem} in the more general case. 
		
		Several almost flat corners could also be introduced in the magnetic barrier leading to a competition between them in the computation of the ground state energy asymptotics. Since we have no explicit control of how the ground state energy \(\lambda_{\mathfrak{b}}(\delta)\) depends on \(\delta\) (see Theorem \ref{Main Theorem}), it will be hard to provide a satisfactory result. 
		
		We could also consider be to consider the Neumann realization of \(P_{B\mathcal{A}, \Omega}\). Then there will be a competition coming from the modal operator discussed in \cite{AssaadBreakdown} and from \(\mathcal{L}_{\mathfrak{b}, \delta}\). The lack of lower bound for both modal operators makes it difficult to evaluate where the eigenfunctions will localize.
	\end{remark}
	\begin{remark}
		It is worth to mention the comparison between this new operator considered as a model operator in Theorem \ref{Theorem 2} and \cite[Theorem 1.2]{Semiclassicalbrokenedge}, where a semi-classical eigenvalue estimate was computed. In both cases we see how \(M_3(\mathfrak{b})\) appears in the first correction term of the asymptotic expansion. Similarly, one can see a clear connection between \cite[Theorem 1.2]{Semiclassicalbrokenedge} and the case where the model operator is the one coming from the slightly curved magnetic barrier stated in Remark \ref{Slight curve remark}. Moreover, we stress that the observation made in \cite[Remark 1.3]{Almostflat} applies here too. In other words, Theorem \ref{Main Theorem} can be seen as a formal consequence of this slightly curved half-plane model. 
	\end{remark}
	We will prove Theorem \ref{Theorem 2} by finding an upper and a lower bound.
	\subsection{Agmon estimate}
	Before starting with the proof of Theorem \ref{Theorem 2} we need a result regarding the exponential decay of the ground state \(u_{\mathfrak{b}, \delta}\) of \(\mathcal{L}_{\mathfrak{b}, \delta}\) far away from the almost flat angle for \(\mathfrak{b} = (1,b)\) with \(b\in (-1,0)\), and with \(\delta\) small enough such that Theorem \ref{Main Theorem} holds. The Agmon estimate we will obtain is similar to \cite[Lemma 3.9]{Bonnthesis} and \cite[Proposition 4.9]{Raymondbook}. First, we recall a version of the IMS localization formula which can be applied in this context.
	\begin{lemma}[IMS localization formula]\label{IMS formula 1}
		Let \((\chi_{j})_{j\in J} \subset C^{\infty}(\R^2)\) be a partition of unity with \(D^{\alpha}\chi^{\alpha}_{j} \in L^{\infty}(\R^2)\) for \(\abs{\alpha}\leq 2\). Then 
		\begin{equation}\label{IMS formula version}
			\mathcal{Q}_{\mathcal{A}, \Omega}(v) = \sum_{j\in J}   \mathcal{Q}_{\mathcal{A}, \Omega}(\chi_j v) - \sum_{j\in J} \norm{\nabla \chi_j v}^2_{L^2(\R^2)}
		\end{equation}
	\end{lemma}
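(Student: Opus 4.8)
The plan is to establish \eqref{IMS formula version} as a \emph{pointwise} algebraic identity, integrate it, and then run a routine density argument; in particular no integration by parts is needed. First I would fix conventions: here a "partition of unity" means $\sum_{j\in J}\chi_j^2 = 1$ on $\R^2$ (this is the normalization under which the formula closes). Abbreviating $D_{\mathcal{A}} := \nabla - i\mathcal{A}$ and using that each $\chi_j$ is real-valued, the Leibniz rule gives $D_{\mathcal{A}}(\chi_j v) = \chi_j\, D_{\mathcal{A}} v + (\nabla\chi_j)\, v$ for $v$ in the magnetic form domain, and the boundedness of $\nabla\chi_j$ guarantees that $\chi_j v$ stays in that domain.

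Next I would expand the integrand term by term:
\[
|D_{\mathcal{A}}(\chi_j v)|^2 = \chi_j^2\,|D_{\mathcal{A}} v|^2 + |\nabla\chi_j|^2\,|v|^2 + 2\,\Re\!\paren*{\chi_j\,\bar v\,(\nabla\chi_j)\cdot D_{\mathcal{A}} v},
\]
and sum over $j$. By $\sum_j\chi_j^2 = 1$ the first term sums to $|D_{\mathcal{A}} v|^2$; the decisive point is that the cross term vanishes, because $\sum_j \chi_j\,\nabla\chi_j = \tfrac12\,\nabla\!\paren*{\sum_j\chi_j^2} = 0$. Hence, pointwise,
\[
\sum_{j\in J}|D_{\mathcal{A}}(\chi_j v)|^2 = |D_{\mathcal{A}} v|^2 + \paren*{\sum_{j\in J}|\nabla\chi_j|^2}|v|^2 .
\]
Integrating over $\Omega$ and rearranging yields \eqref{IMS formula version}.

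The only points requiring some care are: (i) justifying the interchange of summation and integration, which is immediate when $J$ is finite and otherwise follows from the local finiteness of $(\supp\chi_j)_j$ together with the uniform derivative bounds; and (ii) first proving the identity for $v\in C_c^{\infty}(\Omega)$ and extending it to all $v$ in the form domain by continuity of both sides in the magnetic form norm, which is precisely where the $L^{\infty}$ control on $\nabla\chi_j$ (and on $\Delta\chi_j$, if one prefers to phrase the statement at the operator level) is used. I do not expect any genuine obstacle here: the entire content of the lemma is the cancellation $\sum_j\chi_j\nabla\chi_j = 0$, and everything else is bookkeeping.
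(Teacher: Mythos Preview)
Your argument is correct and is precisely the standard derivation of the IMS formula; the paper does not give its own proof but simply refers to \cite[Proposition 4.2]{Raymondbook}, whose proof proceeds along the same lines (pointwise expansion and the cancellation $\sum_j \chi_j\nabla\chi_j=\tfrac12\nabla\sum_j\chi_j^2=0$). So there is nothing to compare: you have supplied exactly the computation the paper omits.
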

	\begin{proof}
		The proof goes similarly as \cite[Proposition 4.2]{Raymondbook}.
	\end{proof}
	Now we are ready to state the theorem.
	\begin{theorem}\label{Exponential decay eigenfunctions}
		Let \(\mathfrak{b} =  (1, b)\) with \(b\in (-1,0)\) and \(\delta_0 \in (0, \pi)\) such that Theorem \ref{Main Theorem} holds for \(\mathcal{L}_{\mathfrak{b}, \delta}\) with \(\delta < \delta_0\). Consider \(u_{\mathfrak{b}, \delta}\) the corresponding normalized ground state. Then for all \(\epsilon \in (0, \sqrt{\beta_{\mathfrak{b}} - \lambda_{\mathfrak{b}}(\delta)})\), there exists a constant \(C_{\epsilon, \delta}\) such that 
		\begin{equation}\label{Agmon estimate equation}
			\norm{u_{\mathfrak{b}, \delta} e^{\epsilon \abs{\cdot}}}^2_{L^2(\R^2)} + \mathcal{Q}_{\mathfrak{b},\delta} (u_{\mathfrak{b},\delta} e^{\epsilon \abs{\cdot}}) \leq C_{\epsilon, \delta},
		\end{equation}
		where \(\mathcal{Q}_{\mathfrak{b},\delta}\) was defined in \eqref{Q_a,b,delta}.
	\end{theorem}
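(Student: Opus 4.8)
\emph{Plan.} I would run the standard Agmon argument, using a large ball centred at the corner $0$ as the ``reference set''. Write $u := u_{\mathfrak{b},\delta}$, so that $\mathcal{L}_{\mathfrak{b},\delta}u = \lambda_{\mathfrak{b}}(\delta)u$ and $\norm{u}_{L^2(\R^2)}=1$; fix $\epsilon\in(0,\sqrt{\beta_{\mathfrak{b}}-\lambda_{\mathfrak{b}}(\delta)})$ and, for $n\in\N$, put $\Phi_n(x) := \epsilon\min(\abs{x},n)$ --- a bounded Lipschitz weight with $\abs{\nabla\Phi_n}\le\epsilon$ a.e.\ and $\Phi_n\uparrow\epsilon\abs{x}$. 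Since $\Phi_n$ is bounded, $e^{\Phi_n}u\in\mathcal{D}(\mathcal{Q}_{\mathfrak{b},\delta})$, and the Agmon (localization) identity reads
\[
\mathcal{Q}_{\mathfrak{b},\delta}\paren*{e^{\Phi_n}u} - \int_{\R^2}\abs{\nabla\Phi_n}^2 e^{2\Phi_n}\abs{u}^2\,dx = \Re\,\langle\mathcal{L}_{\mathfrak{b},\delta}u,\,e^{2\Phi_n}u\rangle_{L^2(\R^2)} = \lambda_{\mathfrak{b}}(\delta)\,\norm{e^{\Phi_n}u}^2_{L^2(\R^2)}.
\]

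\emph{Localization away from the corner.} The key input I need is that, for every $\eta>0$, there is $R_\eta>0$ with $\mathcal{Q}_{\mathfrak{b},\delta}(w)\ge(\beta_{\mathfrak{b}}-\eta)\norm{w}^2_{L^2(\R^2)}$ whenever $\supp w\subset\{\abs{x}\ge R_\eta\}$. Given $\inf\text{Spec}_{\text{ess}}(\mathcal{L}_{\mathfrak{b},\delta})=\beta_{\mathfrak{b}}$ from Appendix~\ref{AppendixA}, this is immediate from Persson's characterization of the bottom of the essential spectrum. If one wants a self-contained argument: cover $\{\abs{x}\ge R_\eta\}$ by a narrow conical neighbourhood of each of the two rays of the magnetic barrier --- where, up to a gauge and a rotation, $\mathcal{L}_{\mathfrak{b},\delta}$ coincides with the translation-invariant step $\mathcal{L}_{\mathfrak{b},0}$, so $\mathcal{Q}_{\mathfrak{b},\delta}\ge\beta_{\mathfrak{b}}\norm{\cdot}^2$ there --- together with two sectors of constant field $b_1=1$ and $b_2=b$, on which the form dominates $\abs{b_1}\norm{\cdot}^2$, resp.\ $\abs{b_2}\norm{\cdot}^2$, both $\ge\beta_{\mathfrak{b}}\norm{\cdot}^2$ since $\beta_{\mathfrak{b}}<\abs{b}$ by Remark~\ref{Remark b cases}; an IMS partition of unity (Lemma~\ref{IMS formula 1}) subordinate to this cover, with transition widths of order $R_\eta$, then yields the bound with $\eta=\mathcal{O}(R_\eta^{-2})$.

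\emph{Closing the estimate.} Next I would pick $\eta>0$ with $\beta_{\mathfrak{b}}-\lambda_{\mathfrak{b}}(\delta)-\epsilon^2-\eta=:2c>0$, a large length $L$, and $\chi_0,\chi_1\in C^\infty(\R^2)$ with $\chi_0^2+\chi_1^2=1$, $\chi_0=1$ on $\{\abs{x}\le R_\eta\}$, $\supp\chi_1\subset\{\abs{x}\ge R_\eta\}$, $\chi_1=1$ on $\{\abs{x}\ge R_\eta+L\}$ and $\abs{\nabla\chi_j}\le C/L$. Applying the IMS formula to $e^{\Phi_n}u$ with $(\chi_0,\chi_1)$, substituting the identity above, discarding $\mathcal{Q}_{\mathfrak{b},\delta}(\chi_0 e^{\Phi_n}u)\ge0$, using the localization bound on $w=\chi_1 e^{\Phi_n}u$, estimating $\abs{\nabla\Phi_n}\le\epsilon$, and writing $\norm{e^{\Phi_n}u}^2=\norm{\chi_0 e^{\Phi_n}u}^2+\norm{\chi_1 e^{\Phi_n}u}^2$, one is left --- after taking $L$ large enough to absorb the $\mathcal{O}(L^{-2})$ term on the left --- with
\[
c\,\norm{\chi_1 e^{\Phi_n}u}^2_{L^2(\R^2)}\le\paren*{\lambda_{\mathfrak{b}}(\delta)+\epsilon^2+C L^{-2}}\norm{\chi_0 e^{\Phi_n}u}^2_{L^2(\R^2)}\le C\,e^{2\epsilon(R_\eta+L)},
\]
uniformly in $n$, since $\supp(\chi_0 e^{\Phi_n}u)\subset\{\abs{x}\le R_\eta+L\}$ and $\norm{u}_{L^2(\R^2)}=1$. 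Hence $\norm{e^{\Phi_n}u}_{L^2(\R^2)}$ is bounded uniformly in $n$, and letting $n\to\infty$ (monotone convergence) gives $\norm{u\,e^{\epsilon\abs{\cdot}}}^2_{L^2(\R^2)}\le C_{\epsilon,\delta}$. Feeding this back into the first identity gives $\mathcal{Q}_{\mathfrak{b},\delta}(e^{\Phi_n}u)\le(\lambda_{\mathfrak{b}}(\delta)+\epsilon^2)\norm{e^{\Phi_n}u}^2_{L^2(\R^2)}\le C_{\epsilon,\delta}$, and lower semicontinuity (closedness) of $\mathcal{Q}_{\mathfrak{b},\delta}$ as $n\to\infty$ then shows $e^{\epsilon\abs{\cdot}}u\in\mathcal{D}(\mathcal{Q}_{\mathfrak{b},\delta})$ with $\mathcal{Q}_{\mathfrak{b},\delta}(u_{\mathfrak{b},\delta}e^{\epsilon\abs{\cdot}})\le C_{\epsilon,\delta}$. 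Adding the two bounds yields \eqref{Agmon estimate equation}.

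\emph{Main obstacle.} Everything except the localization step is routine. The delicate point is the lower bound $\mathcal{Q}_{\mathfrak{b},\delta}\gtrsim\beta_{\mathfrak{b}}\norm{\cdot}^2$ far from the corner: away from the origin $\mathcal{L}_{\mathfrak{b},\delta}$ is only \emph{piecewise} a model operator, so one must split the far region into pieces on each of which it is exactly --- up to gauge and rotation --- either a straight magnetic step of bottom $\beta_{\mathfrak{b}}$, or a Landau Hamiltonian of constant field $b_1$ or $b_2$, with bottom $\abs{b_1}=1$ or $\abs{b_2}=\abs{b}$ (both strictly above $\beta_{\mathfrak{b}}$); and one must take the transition widths of the partition of unity large enough that the $\mathcal{O}(\text{width}^{-2})$ IMS localization error does not eat into the spectral gap $\beta_{\mathfrak{b}}-\lambda_{\mathfrak{b}}(\delta)-\epsilon^2$.
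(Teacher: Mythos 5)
Your proposal is correct and runs on essentially the same tracks as the paper's proof: a truncated Agmon weight, the commutator/localization identity expressing $\mathcal{Q}_{\mathfrak{b},\delta}(e^{\Phi}u)$ in terms of $\lambda_{\mathfrak{b}}(\delta)\norm{e^{\Phi}u}^2$ plus $\norm{|\nabla\Phi|e^{\Phi}u}^2$, an IMS splitting into a near-corner piece and a far piece, the crucial lower bound $\mathcal{Q}_{\mathfrak{b},\delta}\ge(\beta_{\mathfrak{b}}-\mathcal{O}(R^{-2}))\norm{\cdot}^2$ for functions supported in $\{|x|>R\}$ (the paper's Lemma~\ref{Lemma appendix}), and then absorbing the localization error using the gap $\beta_{\mathfrak{b}}-\lambda_{\mathfrak{b}}(\delta)-\epsilon^2>0$. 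The only differences are cosmetic: you take $\Phi_n=\epsilon\min(|x|,n)$ and pass to the limit by monotone convergence, whereas the paper uses a tent function $\epsilon\chi_m(|x|)$ that returns to zero past $2m$ and invokes Fatou; and in the auxiliary localization lemma you suggest a four-piece angular partition (two narrow cones around the rays plus two constant-field sectors), while the paper's Lemma~\ref{Lemma appendix} uses two overlapping wide sectors, each seeing, after extension by zero and a rotation, exactly the straight-step model $\mathcal{L}_{\mathfrak{b},0}$. Both variants are sound and yield the same $\mathcal{O}(R^{-2})$ error, so there is no substantive gap in your argument.
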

	To ease the read, we lifted some of the technical, but straightforward, calculations to Appendix \ref{AppendixA}.
	\begin{proof}
		This proof is similar to \cite[Proposition 4.9]{Raymondbook} and \cite[Theorem 9.1]{Bonnthesis}. For \(m>0\), let
		\begin{equation*}
			\chi_m (t) :=\begin{cases}
				t & \text{ for } 0\leq x \leq m \\
				2m -t & \text{ for } m \leq x \leq 2m\\
				0 & \text{ otherwise}
			\end{cases}
		\end{equation*}
		then \(\abs{\chi_m (t)}\leq  1\) for all \(t\in \R\). Define \(f_m(x):= \epsilon \chi_m(\abs{x})\), then 
		\begin{equation*}
			e^{f_m}u_{\mathfrak{b}, \delta } \in \mathcal{D}(\mathcal{Q}_{\mathfrak{b},\delta})\text{ and } e^{2f_m}u_{\mathfrak{b}, \delta } \in \mathcal{D}(\mathcal{L}_{\mathfrak{b},\delta}).
		\end{equation*}
		Note that \(f_m\) is uniformly Lipschitz and has compact support. Integrating by parts and using \eqref{La} we get
		\begin{equation*}
			\begin{split}
				\langle \mathcal{L}_{\mathfrak{b},\delta} u_{\mathfrak{b}, \delta} , e^{2f_m} u_{\mathfrak{b}, \delta}  \rangle_{L^2(\R^2)}   & = \sum_{k=1}^2 \langle (\nabla -i A_k)^2 u_{\mathfrak{b}, \delta} , e^{2f_m}u_{\mathfrak{b}, \delta}  \rangle_{L^2(\R^2) }\\
				&  =\sum_{k=1}^2 \langle (\nabla -i A_k) u_{\mathfrak{b}, \delta} , (\nabla -i A_k) (e^{2f_m }u_{\mathfrak{b}, \delta} ) \rangle_{L^2(\R^2) } .       
			\end{split}  
		\end{equation*}    
		Let  \(P_k =(\nabla -i A_k)  \) for \(k\in \{1,2\}\) , then 
		\[P_k (e^{2f_m }u_{\mathfrak{b}, \delta}  ) = e^{f_m}[P_k, e^{f_m}]u_{\mathfrak{b}, \delta}  + e^{f_m} P_k (e^{f_m} u_{\mathfrak{b}, \delta} ),\] 
		where \(k\in \{1,2\}\) (we have followed the notation in \cite{Raymondbook}). Thus, 
		\begin{equation}\label{Integration by parts Agmon}
			\begin{split}
				\langle \mathcal{L}_{\mathfrak{b},\delta} u_{\mathfrak{b}, \delta} , e^{2f_m} u_{\mathfrak{b}, \delta}  \rangle_{L^2(\R^2)}   ={}&  \sum_{k=1}^2\langle e^{f_m} P_k  u_{\mathfrak{b}, \delta}  , [P_k, e^{f_m}]u_{\mathfrak{b}, \delta} +P_k (e^{f_m} u_{\mathfrak{b}, \delta} ) \rangle_{L^2(\R^2)} \\
				={}&  \sum_{k=1}^2 \bigg(\langle e^{f_m   } P_k u_{\mathfrak{b}, \delta} , [P_k, e^{f_m}]u_{\mathfrak{b}, \delta} \rangle_{L^2(\R^2)} \\
				& +\langle P_k (e^{f_m}u_{\mathfrak{b}, \delta}),  P_k (e^{f_m}u_{\mathfrak{b}, \delta}) \rangle_{L^2(\R^2)}\\
				& +\langle [e^{f_m}, P_k]u_{\mathfrak{b}, \delta},  P_k (e^{f_m} u_{\mathfrak{b}, \delta} )\rangle_{L^2(\R^2)}\bigg)\\
				={}&  \mathcal{Q}_{\mathfrak{b},\delta} (e^{f_m }u_{\mathfrak{b}, \delta}) - \norm{\abs{\nabla f_m}e^{f_m}u_{\mathfrak{b}, \delta}}^2_{L^2(\R^2)} \\
				& +2\sum_{k=1}^2 \Im{\langle e^{f_m}P_k u_{\mathfrak{b}, \delta} , \partial_k(e^{f_m}) u_{\mathfrak{b}, \delta} \rangle_{L^2(\R^2)}}, 
			\end{split}
		\end{equation}
		where we have used \( e^{f_m} P_k  u_{\mathfrak{b}, \delta}  =P_k  (e^{f_m} u_{\mathfrak{b}, \delta})+  [e^{f_m}, P_k]u_{\mathfrak{b}, \delta} \). Taking real part on both sides in the previous equation (note that \(f_m\) is a real-valued function) we can write 
		\begin{equation}\label{First bound quadratic form exp decay}
			\mathcal{Q}_{\mathfrak{b}, \delta} (e^{f_m }u_{\mathfrak{b},\delta}) = \lambda_{\mathfrak{b}}(\delta)  \norm{e^{f_m }u_{\mathfrak{b},\delta}}^2_{L^2(\R^2) } + \norm{\abs{\nabla f_m}e^{f_m}u_{\mathfrak{b},\delta}}^2_{L^2(\R^2)}. 
		\end{equation}
		By construction, \(f_m\) is a radial function and \(\abs{\chi'_m(x) }\leq 1\), thus
		\begin{equation*}
			\norm{\abs{\nabla f_m}e^{f_m}u_{\mathfrak{b}, \delta}}^2_{L^2(\R^2)} \leq \epsilon^2\norm{e^{f_m}u }^2_{L^2(\R^2)}. 
		\end{equation*}
		Plugging this into \eqref{First bound quadratic form exp decay} we get
		\begin{equation}\label{Quadratic form decay}
			\mathcal{Q}_{\mathfrak{b},\delta} (e^{f_m }u) \leq \paren*{\lambda_{\mathfrak{b}}(\delta) + \epsilon^2} \norm{e^{f_m}u }^2_{L^2(\R^2)}. 
		\end{equation}
		The next step is introducing a suitable partition of unity and use the IMS formula stated in Lemma \ref{IMS formula 1}. By Lemma \ref{Lemma appendix}, we know that for any \(\epsilon \in \big (0, \sqrt{\beta_{\mathfrak{b}} - \lambda_{\mathfrak{b}}(\delta)}\big )\) we can find a \(R_{\epsilon} >0\) such that \(\Sigma_{R_{\epsilon}} > \lambda_{\mathfrak{b}}(\delta)+\epsilon^2\) where \(\Sigma_{R_{\epsilon}}\) was defined in \eqref{Sigma_R}.    
		Let \(\chi_{j,R_{\epsilon}}\) with \(j\in \{1,2\}\) be a partition of unity such that \(\text{supp }\chi_{2,R_{\epsilon}} \subset \R^2 \setminus D(0,R)\) for some \(R\geq R_{\epsilon}\), and \(\sum_{j=1}^2 \abs{\nabla \chi_{j,R}}\leq CR^{-2}\). By the IMS formula (see \eqref{IMS formula version})
		\begin{equation}\label{Exponential decay first after IMS}
			\mathcal{Q}_{\mathfrak{b},\delta} (e^{f_m}u_{\mathfrak{b}, \delta})  \geq \mathcal{Q}_{\mathfrak{b},\delta} (\chi_{1, R_{\epsilon}} e^{f_m}u_{\mathfrak{b}, \delta}) +\mathcal{Q}_{\mathfrak{b},\delta} (\chi_{2, R_{\epsilon}}e^{f_m}u_{\mathfrak{b}, \delta}) - \frac{C}{R^2}\norm{e^{f_m}u_{\mathfrak{b}, \delta}}^2_{L^2(\R^2)}.
		\end{equation}
		By \eqref{First bound quadratic form exp decay} we can rewrite the previous expression as
		\begin{equation*}
			\begin{split}
				\lambda_{\mathfrak{b}}(\delta)  \norm{e^{f_m }u_{\mathfrak{b},\delta}}^2_{L^2(\R^2) } + \norm{\abs{\nabla f_m}e^{f_m}u_{\mathfrak{b},\delta}}^2_{L^2(\R^2)} \geq &  \, \sum_{j=1}^2 \mathcal{Q}_{\mathfrak{b},\delta} (\chi_{i, R_{\epsilon}} e^{f_m}u_{\mathfrak{b}, \delta})\\
				&- \frac{C}{R^2}\norm{e^{f_m}u_{\mathfrak{b}, \delta}}^2_{L^2(\R^2)}. 
			\end{split}
		\end{equation*}
		Noting that \(\abs{\nabla f_m} \leq \epsilon\) a.e., we have
		\begin{equation}\label{Mid computation Agmon estimate}
			\begin{split}
				\paren{\lambda_{\mathfrak{b}}(\delta)+\epsilon^2}  \norm{e^{f_m }u_{\mathfrak{b},\delta}}^2_{L^2(\R^2) } \geq & \,   \sum_{j=1}^2 \mathcal{Q}_{\mathfrak{b},\delta} (\chi_{i, R_{\epsilon}} e^{f_m}u_{\mathfrak{b}, \delta}) \\
				& - \frac{C}{R^2}\norm{e^{f_m}u_{\mathfrak{b}, \delta}}^2_{L^2(\R^2)}. 
			\end{split}
		\end{equation}
		Since \(\chi_{2, R_{\epsilon}} e^{f_m}u_{\mathfrak{b}, \delta} \in \mathcal{M}_R\), where \(\mathcal{M}_R\) was defined in \(\eqref{M_R}\), then Lemma \ref{Lemma appendix} gives
		\begin{equation*}
			\mathcal{Q}_{\mathfrak{b},\delta} (\chi_{2, R_{\epsilon}} e^{f_m}u_{\mathfrak{b}, \delta}) \geq \paren*{\beta_{\mathfrak{b}} - \frac{c}{R^2}}\norm{\chi_{2, R_{\epsilon}} e^{f_m}u_{\mathfrak{b}, \delta} }
		\end{equation*}
		for some constant \(c>0\). Taking this into account into \eqref{Mid computation Agmon estimate} together with the fact that \( \mathcal{Q}_{\mathfrak{b},\delta} (\chi_{1, R_{\epsilon}} e^{f_m}u_{\mathfrak{b}, \delta})\geq 0\), we have
		\begin{multline}
			\paren*{\beta_{\mathfrak{b}} - \lambda_{\mathfrak{b}}(\delta) - \epsilon^2 -\frac{\paren*{c+C}}{R^2} } \norm{\chi_{2, R_{\epsilon}} e^{f_m}u_{\mathfrak{b}, \delta}}^2\\ \leq \paren*{\lambda_{\mathfrak{b}}(\delta)+\epsilon^2 + \frac{c}{R^2}} \norm{\chi_{1, R_{\epsilon}} e^{f_m}u_{\mathfrak{b}, \delta}}^2 ,
		\end{multline}
		and for \(R \geq R_{\epsilon}\) large enough we can guarantee that the first factor is positive, and then find a positive constant \(C_2(R, \epsilon)>0\) such that
		\begin{equation}\label{Bound for chi2}
			\norm{\chi_{2, R_{\epsilon}} e^{f_m}u_{\mathfrak{b}, \delta}}^2\leq C_2(R_0, \epsilon) \norm{u_{\mathfrak{b}, \delta}}^2 = C_2(R_0, \epsilon), 
		\end{equation}
		where we have used 
		\begin{equation} \label{Bound for chi1}
			\norm{\chi_{1, R_{\epsilon}} e^{f_m}u_{\mathfrak{b}, \delta}}^2 \leq e^{2\epsilon R_0} \int_{D(0,R)} \abs{u_{\mathfrak{b}, \delta}}^2 \, dx \leq C_1(R_0,\epsilon). 
		\end{equation}
		Summing up \eqref{Bound for chi2} and \eqref{Bound for chi1} we get
		\begin{equation}\label{Final agmon estimate}
			\norm{e^{f_m}u_{\mathfrak{b}, \delta}}^2 \leq C(R_0, \epsilon),
		\end{equation}
		where \(C(R_0, \epsilon) = C_1(R_0, \epsilon) +C_2(R_0, \epsilon)\). Note that \eqref{Final agmon estimate} holds for any \(m\geq 1\), thus letting \(m\rightarrow +\infty\) and using Fatou's lemma we get the result for the \(L^2\) norm of \(e^{\epsilon \abs{\cdot}} u_{\mathfrak{b}, \delta} \). For \(\mathcal{Q}_{\mathfrak{b},\delta} (e^{\epsilon \abs{\cdot} }u)\) we can use \eqref{Quadratic form decay} and \eqref{Final agmon estimate}. 
	\end{proof}
	\subsection{Upper bound in Theorem \ref{Theorem 2}}
	We can use the same ideas as in \cite[Section 1.4]{FournaisHelfferbook} and \cite[Section 4]{AssaadBreakdown}.
	\begin{prop}\label{Proposition upper bound}
		Let \(\mathfrak{b} = (1,b)\) with \(b\in (-1,0)\), then there exists \(\delta_0>0\) such that for all \(\delta<\delta_0\) there exists constants \(C_2, C_3, B_2 >0\) such that for all \(B>B_2\)
		\begin{equation}\label{Upper bound}
			\lambda_1(B) - B\lambda_{\mathfrak{b}}(\delta)  \leq C_2 e^{-C_3B^{\rho_2}},
		\end{equation}
		for \(\rho_2 \in \paren*{0, \frac{1}{2}}\). 
	\end{prop}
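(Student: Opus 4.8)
The plan is to insert into the min-max principle a trial state obtained by rescaling and truncating the ground state of the model operator $\mathcal{L}_{\mathfrak{b},\delta}$. By Theorem \ref{Main Theorem}, for $\delta<\delta_0$ the value $\lambda_{\mathfrak{b}}(\delta)$ is a simple eigenvalue of $\mathcal{L}_{\mathfrak{b},\delta}$; let $u_{\mathfrak{b},\delta}$ denote an $L^2$-normalized eigenfunction. By Theorem \ref{Exponential decay eigenfunctions} there is a fixed $\epsilon>0$ such that $\norm{u_{\mathfrak{b},\delta}e^{\epsilon\abs{\cdot}}}_{L^2(\R^2)}<\infty$, and likewise for its magnetic gradient. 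By \eqref{Parametrization} the corner of the magnetic barrier is the origin, which lies in the interior of $\Omega$, and there is $r_0>0$ with $D(0,r_0)\subset\Omega$ such that inside $D(0,r_0)$ one has $\Omega_j\cap D(0,r_0)=\Omega_{j,\delta}\cap D(0,r_0)$ and — since $\mathcal{A}=\mathcal{A}_{\mathfrak{b},\delta}$ in the setting of the proposition — the operator $P_{B\mathcal{A},\Omega}$ restricted to $D(0,r_0)$ is exactly $-(\nabla-iB\mathcal{A}_{\mathfrak{b},\delta})^2$. (In the more general situation discussed in the remarks, $\mathcal{A}$ would agree with $\mathcal{A}_{\mathfrak{b},\delta}$ only up to a smooth gauge $\nabla\phi$ on $D(0,r_0)$, and one would insert a factor $e^{iB\phi}$ below; this is the only change.)

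Next I would fix $\rho\in(0,\tfrac12)$, choose $\chi\in C_c^\infty(\R^2)$ with $0\le\chi\le1$, $\chi\equiv1$ on $D(0,\tfrac12)$ and $\supp\chi\subset D(0,1)$, and for $B$ large enough that $B^{-\rho}<r_0$ set
\[
v_B(x):=\chi\!\paren*{B^{\rho}x}\,u_{\mathfrak{b},\delta}\!\paren*{\sqrt{B}\,x}\in H^1_0(\Omega),
\]
which is supported in $D(0,B^{-\rho})\subset\Omega$. To estimate $\mathcal{Q}^D_{B\mathcal{A}, \Omega}(v_B)$ I would expand the magnetic derivative of the product $\chi(B^{\rho}\cdot)\,u_{\mathfrak{b},\delta}(\sqrt B\,\cdot)$, use that $\mathcal{A}_{\mathfrak{b},\delta}$ is piecewise linear and $1$-homogeneous with respect to dilations centred at the origin, and change variables to $y=\sqrt{B}\,x$: the principal term becomes $\int\chi(B^{\rho-1/2}y)^2\abs{(\nabla-i\mathcal{A}_{\mathfrak{b},\delta})u_{\mathfrak{b},\delta}}^2\,dy\le\mathcal{Q}_{\mathfrak{b},\delta}(u_{\mathfrak{b},\delta})=\lambda_{\mathfrak{b}}(\delta)$, whereas the terms carrying a factor $\nabla\chi(B^{\rho-1/2}\cdot)$, supported in $\{\abs{y}\ge\tfrac12B^{1/2-\rho}\}$, are bounded — by Cauchy--Schwarz and the Agmon estimate of Theorem \ref{Exponential decay eigenfunctions} — by $CB^{2\rho-1}e^{-\epsilon B^{1/2-\rho}}$. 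The same substitution gives $\norm{v_B}^2_{L^2(\Omega)}=B^{-1}\paren*{1-\mathcal{O}(e^{-\epsilon B^{1/2-\rho}})}$. Dividing and applying the variational characterization of $\lambda_1(B)$ then yields
\[
\lambda_1(B)\le\frac{\mathcal{Q}^D_{B\mathcal{A}, \Omega}(v_B)}{\norm{v_B}^2_{L^2(\Omega)}}\le B\lambda_{\mathfrak{b}}(\delta)+C_2\,e^{-C_3 B^{\rho_2}}
\]
for suitable $C_2,C_3>0$ and any $\rho_2<\tfrac12-\rho$, which is \eqref{Upper bound}.

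The main point requiring care is the calibration of the truncation scale $B^{-\rho}$ against the Agmon weight: taking $\rho<\tfrac12$ makes the cutoff, in the model variable $y$, a cutoff at the large radius $\tfrac12B^{1/2-\rho}\to\infty$, so that the exponential decay of Theorem \ref{Exponential decay eigenfunctions} converts the localization error into $e^{-C_3B^{\rho_2}}$; taking $\rho>0$ makes $D(0,B^{-\rho})$ shrink into $D(0,r_0)$, which is what guarantees $v_B\in H^1_0(\Omega)$ and that the geometry of $\Omega$ near the corner coincides with that of the model. The delicate — but routine — bookkeeping is to check that the various powers of $B$ generated by $\nabla\chi$, by the cross terms in the expansion of the quadratic form, and by expanding $\norm{v_B}^{-2}_{L^2(\Omega)}$, are all absorbed into this exponentially small remainder; no deeper obstacle is expected, since the substantive input (the exponential decay of $u_{\mathfrak{b},\delta}$) is already furnished by Theorem \ref{Exponential decay eigenfunctions}.
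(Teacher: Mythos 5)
Your proposal is correct and follows essentially the same route as the paper: rescale the ground state $u_{\mathfrak{b},\delta}$ by $\sqrt B$, cut it off at scale $B^{-\rho}$ with a smooth bump so that the resulting trial state lies in $H^1_0(\Omega)$, exploit the dilation covariance of $\mathcal{A}_{\mathfrak{b},\delta}$ to reduce the quadratic form to $\mathcal{Q}_{\mathfrak{b},\delta}(u_{\mathfrak{b},\delta})$ up to tails, and control those tails via the Agmon estimate of Theorem~\ref{Exponential decay eigenfunctions}. Your bookkeeping of the Jacobian and scaling powers ($1$ for the quadratic form, $B^{-1}$ for the norm) is in fact the correct one; the paper writes $B^{\frac12}$ and $B^{-\frac12}$ at the corresponding step, which appears to be a typo since the subsequent ratio $B\lambda_{\mathfrak{b}}(\delta)$ is what both derivations reach.
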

	\begin{proof}
		We denote by \(D(c, r)\) the open disc in \(\R^2\) of center \(c\) and radius \(r\). Let \(\chi \in C^{\infty}(\R^2)\) be a cut-off function such that \( 0\leq \chi \leq 1 \),
		\begin{equation*}
			\chi(x) = \begin{cases}
				1 &  \text{ in } D\paren*{0, \frac{1}{2}} \\
				0 & \text{ if } x\in \R^2 \setminus D(0,1)
			\end{cases}, 
		\end{equation*}
		and \( \abs{\nabla \chi (x)}\leq C \text{ for all } x\in \R^2,\) where \(C>0\) is some positive constant. As we mentioned before, we will work under the parametrization stated in \eqref{Parametrization}. For \(B>0\) large enough we have that
		\begin{equation*}
			D(0, B^{-\rho_2}) \subset \Omega.
		\end{equation*}
		Define \(\chi_{\rho_2}(x) := \chi(B^{\rho_2}x)\), then \(\chi_{\rho_2}\) is a smooth function such that \( 0\leq \chi_{\rho_2} \leq 1 \), and
		\begin{equation}\label{chi tilde}
			\chi_{\rho_2}(x) = \begin{cases}
				1 &  \text{ in } D\paren*{0, \frac{1}{2}B^{-\rho_2}} \\
				0 & \text{ if } x \in \R^2 \setminus D(0,B^{-\rho_2})
			\end{cases}, 
		\end{equation}
		and \(\abs{\nabla \chi_{\rho_2}(x)}\leq C B^{\rho_2}\) for all \(x\in \R^2\). Theorem \ref{Main Theorem} allows us to find a \(\delta_0\) such that for all \(\delta\in (0, \delta_0)\) we can find a normalized eigenfunction \(u_{\mathfrak{b}, \delta}\)  corresponding to \(\lambda_{\mathfrak{b}}(\delta)\). Let 
		\begin{equation}\label{Trial upper bound}
			u(x) :=  u_{\mathfrak{b}, \delta}(\sqrt{B} x) \chi_{\rho_2}(x),.
		\end{equation}
		We have that \(u\in \mathcal{D}(\mathcal{Q}^D_{B\mathcal{A}, \Omega})\) and
		\begin{equation}\label{Upper bound Q energy}
			\begin{split}
				\mathcal{Q}^D_{B\mathcal{A}, \Omega}(u) \leq &  \, B^{\frac{1}{2}} \int_{D(0, B^{\frac{1}{2}-\rho_2})} \abs{(\nabla_y -i \mathcal{A}(y)) u_{\mathfrak{b}, \delta}(y)}^2\, dy \\
				& \, + CB^{ \rho_2 -\frac{1}{2}} \int_{D(\frac{1}{2}B^{\frac{1}{2}-\rho_2}, B^{\frac{1}{2}-\rho_2})} \abs{ u_{\mathfrak{b}, \delta} (y)}^2 \, dy \\
				\leq &\,  B^{\frac{1}{2}} \lambda_{\mathfrak{b}}(\delta)+  C'B^{ \rho_2 -\frac{1}{2}} e^{-\frac{C_2 B^{\frac{1}{2}-\rho_2}}{2}} , 
			\end{split}       
		\end{equation}
		where \(y=\sqrt{B}x\) and we have used the properties of \( \chi_{\rho_2}\) together with Theorem \ref{Exponential decay eigenfunctions}. Moreover, 
		\begin{equation}\label{Upper bound norm}
			\begin{split}
				\norm{u}_{L^2(\Omega)}^2 \geq &   B^{-\frac{1}{2}}  \int_{D(0, \frac{1}{2}B^{\frac{1}{2}-\rho_2})} \abs{ u_{\mathfrak{b}, \delta}(y)}^2 \, dy \\
				\geq &  B^{-\frac{1}{2}} \big (1 - C'' e^{-\frac{1}{2}C_2 B^{\frac{1}{2}-\rho_2}}\big ) ,
			\end{split}
		\end{equation}
		where we have used again the exponential decay shown in Theorem \ref{Exponential decay eigenfunctions}. By the min-max principle we have
		\begin{equation*}
			\lambda_1(B) \leq \frac{\mathcal{Q}^D_{B\mathcal{A}, \Omega}(u)}{ \norm{u}_{L^2(\Omega)}^2} \leq B \lambda_{\mathfrak{b}}(\delta) + \mathcal{O}\paren{e^{-B^{\frac{1}{2}- \rho_2}}} \qedhere
		\end{equation*}
	\end{proof}
	
	\subsection{Lower bound in Theorem \ref{Theorem 2}}
	To get the lower bound, we will use a localization argument. We follow the ideas from \cite[Section 8.2.2]{FournaisHelfferbook}.
	\begin{prop}\label{Lower bound proposition}
		Let \(\mathfrak{b} = (1,b)\) with \(b\in (-1,0)\) and \(0<\rho_1<1\), then there exists constants \(C_1, B_1 >0\) such that for all \(B>B_1\)
		\begin{equation}\label{Lower bound}
			-C_1B^{\rho_1} \leq  \lambda_1(B) - B\lambda_{\mathfrak{b}}(\delta).
		\end{equation}
	\end{prop}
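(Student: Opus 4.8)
The plan is to establish the lower bound by a multiscale localization argument in the spirit of \cite[Section~8.2.2]{FournaisHelfferbook}: one covers $\overline{\Omega}$ by balls of radius $\sim B^{-\rho}$, bounds the magnetic energy on each ball from below by $B$ times the bottom of the spectrum of a suitable model operator, and reassembles the estimates with the IMS localization formula of Lemma~\ref{IMS formula 1}, which costs a term of order $B^{2\rho}$.

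First I would fix $\rho:=\rho_1/2\in(0,\tfrac12)$ and, for $B$ large, choose a partition of unity $(\chi_{j,B})_j$ subordinate to a cover of $\overline{\Omega}$ by balls of radius $B^{-\rho}$, with $\sum_j\chi_{j,B}^2\equiv 1$ on $\overline{\Omega}$ and $\sum_j\norm{\nabla\chi_{j,B}}_{L^\infty(\R^2)}^2\leq CB^{2\rho}$. Because the corner of the barrier, the two straight segments making up the interface $\overline{\Omega_1}\cap\overline{\Omega_2}$ (recall \eqref{Parametrization}), the two points where the barrier reaches $\partial\Omega$, and the remaining part of $\partial\Omega$ are pairwise separated away from those junction points, one may further require that each ball meets at most one of these ``features''. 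By Lemma~\ref{IMS formula 1},
\[
\mathcal{Q}^D_{B\mathcal{A},\Omega}(u)\;\geq\;\sum_j\mathcal{Q}^D_{B\mathcal{A},\Omega}(\chi_{j,B}u)\;-\;CB^{2\rho}\,\norm{u}^2_{L^2(\Omega)},
\]
so it suffices to prove $\mathcal{Q}^D_{B\mathcal{A},\Omega}(\chi_{j,B}u)\geq B\lambda_{\mathfrak{b}}(\delta)\,\norm{\chi_{j,B}u}^2_{L^2(\Omega)}$ for every $j$, for $\delta$ small.

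For a fixed ball $D_j$ of radius $B^{-\rho}$ I would rescale $y=\sqrt{B}x$, which maps $D_j$ to a ball of radius $B^{1/2-\rho}$ and turns the localized form into $B$ times a magnetic Dirichlet energy for the (rescaled) field $\mathbbm{1}_{\Omega_1}+b\,\mathbbm{1}_{\Omega_2}$; since $D_j$ is simply connected, two vector potentials with equal curl on $D_j$ differ by a gradient (cf.\ \cite[Lemma~1.1]{LeinfelderArticle}), so after a gauge change the potential on $\supp(\chi_{j,B}u)$ may be taken to be that of the relevant model, and extending the rescaled function by zero reduces the required estimate to a bound by the bottom of the model's spectrum. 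If $D_j$ meets the corner then $D_j\subset\Omega$ for $B$ large and, by \eqref{Parametrization}, the rescaled field coincides on the support with $\operatorname{curl}\mathcal{A}_{\mathfrak{b},\delta}$, so the localized energy is exactly $B\,\mathcal{Q}_{\mathfrak{b},\delta}(\widetilde v)$ for the rescaled $\widetilde v$, hence $\geq B\lambda_{\mathfrak{b}}(\delta)\norm{\widetilde v}^2$. If $D_j$ meets a barrier segment but neither the corner nor $\partial\Omega$, the model is a rotation of $\mathcal{L}_{\mathfrak{b},0}$, whose bottom $\beta_{\mathfrak{b}}$ satisfies $\beta_{\mathfrak{b}}>\lambda_{\mathfrak{b}}(\delta)$ by Theorem~\ref{Main Theorem}. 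If $D_j$ touches only $\partial\Omega$, the model is the Dirichlet magnetic half-plane with constant field $1$ (in $\Omega_1$) or $b$ (in $\Omega_2$), whose spectral bottom is $1$ resp.\ $|b|$; by Remark~\ref{Remark b cases}, $\lambda_{\mathfrak{b}}(\delta)<\beta_{\mathfrak{b}}<|b|<1$. If $D_j$ is interior and off the barrier, the field is constant with bottom $1$ or $|b|$, again $>\lambda_{\mathfrak{b}}(\delta)$. Finally, if $D_j$ contains a point where the barrier meets $\partial\Omega$, the local Dirichlet junction model has bottom $\geq\min(\beta_{\mathfrak{b}},|b|)>\lambda_{\mathfrak{b}}(\delta)$ for $\delta$ small (cf.\ the analogous boundary–step models in \cite{AssaadBreakdown,FournaisHelfferbook}).

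Summing over $j$ then gives $\mathcal{Q}^D_{B\mathcal{A},\Omega}(u)\geq(B\lambda_{\mathfrak{b}}(\delta)-CB^{2\rho})\norm{u}^2_{L^2(\Omega)}$, and the min–max principle yields $\lambda_1(B)\geq B\lambda_{\mathfrak{b}}(\delta)-CB^{\rho_1}$, which is \eqref{Lower bound}. The main obstacle is the third step: one must \emph{(i)} carry out the gauge matching near the corner so that, after rescaling, the operator is \emph{exactly} $\mathcal{L}_{\mathfrak{b},\delta}$ on a large ball — this is clean here precisely because the field is piecewise constant and, by \eqref{Parametrization}, the interface near the origin is the graph of $g_\delta$; and \emph{(ii)} verify that each of the other model operators, in particular the junction model where the barrier hits the boundary, has spectral bottom strictly above $\lambda_{\mathfrak{b}}(\delta)$, which is exactly where one invokes both $\lambda_{\mathfrak{b}}(\delta)<\beta_{\mathfrak{b}}$ (Theorem~\ref{Main Theorem}) and the chain $|b|\Theta_0<\beta_{\mathfrak{b}}<|b|<1$ from Remark~\ref{Remark b cases}.
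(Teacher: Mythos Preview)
Your proof is correct and follows essentially the same localization argument as the paper: IMS formula at scale $B^{-\rho}$, with model operators $\mathcal{L}_{\mathfrak{b},\delta}$ at the corner, $\mathcal{L}_{\mathfrak{b},0}$ along the barrier, and constant-field Landau operators in the bulk, combined via $\lambda_{\mathfrak{b}}(\delta)<\beta_{\mathfrak{b}}<|b|$. The paper streamlines your boundary cases by exploiting the Dirichlet condition directly: since $u\in H_0^1(\Omega)$ and $\mathcal{A}_{\mathfrak{b},\delta}$ is defined on all of $\R^2$, each $\chi_{j,B}u$ extends by zero to the plane, so balls touching $\partial\Omega$ (including the barrier--boundary junction) are absorbed into the same whole-plane models as interior and barrier balls, rendering your separate half-plane and junction discussions unnecessary.
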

	\begin{proof}
		Consider a partition of unity \(\{\chi_j^B\}_{j\in J}\) of \(\Omega\) such that there exists constants \(C, R_0>0\) satisfying
		\begin{equation}
			\sum_{j\in J} \abs{\chi_{j}^B}^2 = 1,
		\end{equation}
		\begin{equation}\label{Partition of unity derivative bound}
			\sum_{j\in J} \abs{\nabla \chi_{j}^B}\leq C R_0^{-2} B^{2\rho_1}, 
		\end{equation}
		\begin{equation}\label{Support chi_1^B}
			\text{supp } \chi_1^B \subset D(0, R_0 B^{-\rho_1})
		\end{equation}
		and, for \(j\neq 1\), 
		\begin{equation}\label{Avoid origing condition}
			\text{supp } (\chi_j^B) \subset D(z_j, R_0 B^{-\rho_1}) \text{ and } (0,0) \notin  \text{supp } (\chi_j^B)
		\end{equation}
		with
		\begin{equation}\label{Two cases partition}
			\text{either supp } \chi_j^B \cap \Gamma =  \varnothing , \text{ or } z_j \in \Gamma , 
		\end{equation}
		where \(\Gamma := (\partial \Omega_1 \cup \partial \Omega_2) \setminus \partial \Omega\). By IMS formula (see Lemma \ref{IMS formula 1}), for all \(u\in H^1_0(\Omega)\) 
		\begin{equation}
			\mathcal{Q}_{\mathcal{A}, \Omega}(u) =  \mathcal{Q}_{\mathcal{A}, \Omega}(\chi_1^B u)+ \sum_{\text{int}}  \mathcal{Q}_{\mathcal{A}, \Omega}(\chi_j^B u) + \sum_{\Gamma}  \mathcal{Q}_{\mathcal{A}, \Omega}(\chi_j^B u)- \sum_j  \norm{\abs{\nabla \chi_j^B}u}^2_{L^2(\Omega)},
		\end{equation}
		where \(\sum_{\text{int}}\) refers to the first case in \eqref{Two cases partition} and \(\sum_{\Gamma}\) to the second. Note that the last term can be bounded using \eqref{Partition of unity derivative bound} by
		\begin{equation}\label{Last term lower bound}
			\sum_j  \norm{\abs{\nabla \chi_j^B}u}^2 \leq C R_0^{-2} B^{2\rho_1} \norm{u}^2_{L^2(\Omega)}.
		\end{equation}
		Moreover, \(\chi_1^B u\in \mathcal{D}(\mathcal{Q}_{\mathfrak{b}, \delta})\) and, combining Theorem \ref{Main Theorem} and min-max principle, we have
		\begin{equation}\label{Lower bound chi1}
			\mathcal{Q}_{\mathcal{A}, \Omega}(\chi_1^B u) \geq B \lambda_{\mathfrak{b}}(\delta) \norm{\chi_1 u}^2_{L^2(\Omega)}
		\end{equation}
		where we have used \eqref{Support chi_1^B} and the change of variables \(y= \sqrt{B}x\) (see \cite[Remark 3.3]{Bonnthesis}). For the elements in \(\sum_{\text{int}}\) we can adapt \cite[Lemma 1.4.1]{FournaisHelfferbook} as in \cite[Section 4]{AssaadBreakdown}. Note that any \(\chi_j^B\) inside of this sum has support contained in either \(\Omega_1\) or \(\Omega_2\). For \((x_1, x_2) \in \Omega_1\) we have the commutator relation
		\begin{equation*}
			B = i [\partial_{x_1} - i A_{\mathfrak{b}, \delta} , \partial_{x_2} ] 
		\end{equation*}
		and for \((x_1, x_2) \in \Omega_2\)
		\begin{equation*}
			B \abs{b} = -i [\partial_{x_1} - i A_{\mathfrak{b}, \delta} , \partial_{x_2} ].
		\end{equation*}
		Then one can prove, analogously as in \cite[Lemma 1.4.1]{FournaisHelfferbook}, that
		\begin{equation}\label{Lower bound interior}
			\sum_{\text{int}}  \mathcal{Q}_{\mathcal{A}, \Omega}(\chi_j^B u) \geq B \abs{b}  \sum_{\text{int}} \norm{\chi_j^B}^2_{L^2(\Omega)}. 
		\end{equation}
		For the last term, we can use the modal operator \(\mathcal{L}_{\mathfrak{b}, 0}\) because after a suitable rotation \(\chi_{j}^B u \in \mathcal{D}(\mathcal{Q}_{\mathfrak{b}, 0})\) by \eqref{Avoid origing condition} and \eqref{Two cases partition}. Thus, again by the min-max principle and \eqref{Beta a definition} 
		\begin{equation}\label{Lower bound gamma}
			\sum_{\text{bnd}}  \mathcal{Q}_{\mathcal{A}, \Omega}(\chi_j^B u) \geq B \beta_{\mathfrak{b}}  \sum_{\text{bnd}} \norm{\chi_j^B}^2_{L^2(\Omega)},
		\end{equation}
		where we have used the same change of variables as in \eqref{Lower bound chi1}. Let \(u\in H^1_0(\Omega)\) with \(\norm{u}_{L^2(\Omega)}=1\), then by min-max principle and combining \eqref{Last term lower bound}, \eqref{Lower bound chi1}, \eqref{Lower bound interior} and \eqref{Lower bound gamma} we obtain
		\begin{equation}\label{Lower bound final result}
			\lambda_1(B) \geq B\lambda_{\mathfrak{b}}(\delta)  - C R_0^{-2} B^{2\rho_1},
		\end{equation}
		which finishes the proof. 
	\end{proof}
	\subsection{Monotonicity of \(\lambda_1(B)\)}
	We want to study if \(B\mapsto \lambda_1(B)\) is monotone non-decreasing for some \(B>0\). This has been largely studied for Neumann magnetic Laplacian in smooth domains (see \cite{FournaisHelffer07strongdiamag,FournaisHelfferbook,FournaisSundqvistLackDiag}) and for magnetic steps in \cite{AssaadBreakdown}. The fact that we are avoiding Frenet coordinates and  considering a simpler setting will help us to simplify the technical part of the proofs. 
	
	We will mainly follow \cite[Section 8.5]{FournaisHelfferbook} and \cite[Section 6]{AssaadBreakdown}. Since \(\Omega\) is smooth, using a regularity theorem (one can adapt \cite[Section 9.6]{Brezis} for example) we have 
	\[\mathcal{D}(P^D_{B\mathcal{A}, \Omega}) = H^2(\Omega) \cap H^1_0(\Omega).\]
	An important observation is that the domain does not depend on \(B\), and this allows us to apply analytic perturbation theory (see \cite[Appendix C]{FournaisHelfferbook}). Moreover, the boundary conditions do not depend on \(B\). Thus, for any \(B>0\), we can define the left and right derivatives of \(\lambda_1 (B)\) as 
	\begin{equation}
		\lambda'_{1, \pm} (B) := \lim_{\epsilon \rightarrow \pm 0} \frac{\lambda_1(B+\epsilon) - \lambda_1(B)}{\epsilon}. 
	\end{equation}
	\begin{prop}\label{Normal Agmon estimates}
		Let \(u_1(\cdot; B)\) be a ground state of \(P^D_{B\mathcal{A}, \Omega}\) and \(B\geq 1\), then there exist constants \(\epsilon, R_0>0\) such that
		\begin{equation}
			\norm{u_1 e^ {\epsilon \abs{\cdot}}}^2_{L^2(\Omega)} + B^ {-1} \mathcal{Q}_{B\mathcal{A}, \Omega}(u_1 e^ {\epsilon \abs{\cdot}})\leq C_{R_0, \epsilon},
		\end{equation}
		where \(C_{R_0, \epsilon}\) is a positive constant. This implies that for all \(N>0\), 
		\begin{equation}\label{Polynomial Agmon estimates}
			\int_{\Omega}\abs{x}^N\abs{u_1(x; B)}^2 \, dx = \mathcal{O}(B^{-\frac{N}{2}}). 
		\end{equation}
	\end{prop}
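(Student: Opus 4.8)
The plan is to run an Agmon-type estimate directly on $\Omega$, combining the weighted integration-by-parts identity from the proof of Theorem~\ref{Exponential decay eigenfunctions} with the $B$-dependent localization and local spectral lower bounds from the proof of Proposition~\ref{Lower bound proposition}; the only new input is the spectral gap supplied by Theorem~\ref{Main Theorem} together with Proposition~\ref{Proposition upper bound}. Since the ground state concentrates at scale $B^{-1/2}$ near the almost flat corner, the relevant weight decays like $e^{-\epsilon\sqrt{B}\,\abs{x}}$ (the factor $\sqrt{B}$ being the one consistent with \eqref{Polynomial Agmon estimates}). Fix $\epsilon>0$, let $\chi_m$ be the Lipschitz cut-off from the proof of Theorem~\ref{Exponential decay eigenfunctions}, and set $f_m(x):=\epsilon\sqrt{B}\,\chi_m(\abs{x})$, so that $f_m$ is bounded, radial, Lipschitz, compactly supported, $\abs{\nabla f_m}\le\epsilon\sqrt{B}$ a.e., and $e^{f_m}u_1\in H_0^1(\Omega)$. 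Since $\mathcal{D}(P^D_{B\mathcal{A},\Omega})=H^2(\Omega)\cap H^1_0(\Omega)$, integrating by parts exactly as in the derivation of \eqref{First bound quadratic form exp decay} (the boundary terms vanish because $u_1\in H^1_0(\Omega)$) gives
\[
\mathcal{Q}_{B\mathcal{A},\Omega}(e^{f_m}u_1)=\lambda_1(B)\norm{e^{f_m}u_1}^2_{L^2(\Omega)}+\norm{\abs{\nabla f_m}e^{f_m}u_1}^2_{L^2(\Omega)}\le\bigl(\lambda_1(B)+\epsilon^2 B\bigr)\norm{e^{f_m}u_1}^2_{L^2(\Omega)}.
\]

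Next I would localize. Introduce a partition of unity $\{\chi_j^B\}_{j\in J}$ of $\overline{\Omega}$ at scale $R_0 B^{-1/2}$ as in the proof of Proposition~\ref{Lower bound proposition}: $\sum_j\abs{\chi_j^B}^2=1$, $\sum_j\abs{\nabla\chi_j^B}^2\le CR_0^{-2}B$, $\supp\chi_0^B\subset D(0,R_0B^{-1/2})$, and for $j\ne 0$ the support of $\chi_j^B$ avoids the origin and lies either inside $\Omega_1$, or inside $\Omega_2$, or is centred at a point of $\Gamma=(\partial\Omega_1\cup\partial\Omega_2)\setminus\partial\Omega$, on which $\Gamma$ is a straight segment by \eqref{Parametrization}. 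The IMS formula (Lemma~\ref{IMS formula 1}) then gives $\mathcal{Q}_{B\mathcal{A},\Omega}(e^{f_m}u_1)\ge\sum_j\mathcal{Q}_{B\mathcal{A},\Omega}(\chi_j^B e^{f_m}u_1)-CR_0^{-2}B\norm{e^{f_m}u_1}^2_{L^2(\Omega)}$. For $j\ne 0$ supported in $\Omega_1$ (field $B$) or $\Omega_2$ (field $B\abs{b}$) the Landau-type bound used in Proposition~\ref{Lower bound proposition} (adapting \cite[Lemma 1.4.1]{FournaisHelfferbook}) gives $\mathcal{Q}_{B\mathcal{A},\Omega}(\chi_j^B v)\ge B\abs{b}\norm{\chi_j^B v}^2\ge B\beta_{\mathfrak{b}}\norm{\chi_j^B v}^2$, using $\abs{b}<1$ and $\beta_{\mathfrak{b}}<\abs{b}$ (Remark~\ref{Remark b cases}); for $j\ne 0$ centred on $\Gamma$, a rotation and the dilation $y=\sqrt{B}x$ put $\chi_j^B v$ into the form domain of $\mathcal{L}_{\mathfrak{b},0}$ (the inherited Dirichlet condition on $\sqrt{B}\,\Omega$ only shrinks the form domain), whence $\mathcal{Q}_{B\mathcal{A},\Omega}(\chi_j^B v)\ge B\beta_{\mathfrak{b}}\norm{\chi_j^B v}^2$ by \eqref{Beta a definition}. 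For $j=0$ I use only $\mathcal{Q}_{B\mathcal{A},\Omega}(\chi_0^B e^{f_m}u_1)\ge 0$ together with $\abs{f_m}\le\epsilon R_0$ on $\supp\chi_0^B$, so that $\norm{\chi_0^B e^{f_m}u_1}^2\le e^{2\epsilon R_0}\norm{u_1}^2=e^{2\epsilon R_0}$.

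Collecting these bounds and using $\sum_j\norm{\chi_j^B e^{f_m}u_1}^2=\norm{e^{f_m}u_1}^2$ yields
\[
\Bigl(B\beta_{\mathfrak{b}}-\lambda_1(B)-\epsilon^2 B-CR_0^{-2}B\Bigr)\norm{e^{f_m}u_1}^2_{L^2(\Omega)}\le B\beta_{\mathfrak{b}}\,e^{2\epsilon R_0}.
\]
By Proposition~\ref{Proposition upper bound}, $\lambda_1(B)/B\le\lambda_{\mathfrak{b}}(\delta)+o(1)$, and $\lambda_{\mathfrak{b}}(\delta)<\beta_{\mathfrak{b}}$ for $\delta<\delta_0$ by Theorem~\ref{Main Theorem}, so $\beta_{\mathfrak{b}}-\lambda_1(B)/B\ge\kappa:=\tfrac12(\beta_{\mathfrak{b}}-\lambda_{\mathfrak{b}}(\delta))>0$ for $B$ large. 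Choosing $\epsilon,R_0$ with $\epsilon^2+CR_0^{-2}\le\kappa/2$ makes the bracket $\ge(\kappa/2)B>0$, hence $\norm{e^{f_m}u_1}^2\le 2\beta_{\mathfrak{b}}\kappa^{-1}e^{2\epsilon R_0}=:C_{R_0,\epsilon}$ uniformly in $m$ and in large $B$ (for $B$ in a bounded range the weight is bounded on the bounded set $\Omega$, so the estimate is trivial there). Letting $m\to\infty$ and using Fatou's lemma gives $\norm{e^{\epsilon\sqrt{B}\,\abs{\cdot}}u_1}^2_{L^2(\Omega)}\le C_{R_0,\epsilon}$, and the bound on $B^{-1}\mathcal{Q}_{B\mathcal{A},\Omega}(e^{\epsilon\sqrt{B}\,\abs{\cdot}}u_1)$ follows by passing to the limit in the first identity and using that $\lambda_1(B)/B$ is bounded. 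Finally \eqref{Polynomial Agmon estimates} follows since, for every $N>0$,
\[
\int_\Omega\abs{x}^N\abs{u_1(x;B)}^2\,dx\le\Bigl(\sup_{r\ge 0}r^N e^{-2\epsilon\sqrt{B}\,r}\Bigr)\norm{e^{\epsilon\sqrt{B}\,\abs{\cdot}}u_1}^2_{L^2(\Omega)}=\Bigl(\tfrac{N}{2e\epsilon}\Bigr)^N C_{R_0,\epsilon}\,B^{-N/2}.
\]

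The hard part will be the lower bound on the pieces $\chi_j^B$ straddling the magnetic barrier $\Gamma$: one must reduce, via a rotation and the rescaling $y=\sqrt{B}x$, to the translation-invariant step model $\mathcal{L}_{\mathfrak{b},0}$ and invoke $\beta_{\mathfrak{b}}=\inf\text{Spec}(\mathcal{L}_{\mathfrak{b},0})$, while checking that the inherited Dirichlet condition on $\sqrt{B}\,\Omega$ and the compact support of $\chi_j^B$ are harmless, and treating the pieces whose support meets $\partial\Omega$ or the endpoints of $\Gamma$. The straightness of $\Gamma$ away from the corner, guaranteed by \eqref{Parametrization}, makes this reduction exact; in the curved case of the remark after Theorem~\ref{Theorem 2} one instead picks up curvature errors of relative size $\mathcal{O}(B^{-1})$, absorbed by the same parameter juggling.
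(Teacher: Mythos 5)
Your proof is correct and follows essentially the same route as the paper's: the Agmon identity from the proof of Theorem~\ref{Exponential decay eigenfunctions} (derivative term bounded by $\epsilon^2 B$), an IMS localization at scale $R_0 B^{-1/2}$ with the partition of unity from Proposition~\ref{Lower bound proposition}, and the spectral gap $\lambda_{\mathfrak b}(\delta)<\beta_{\mathfrak b}$ supplied by Theorem~\ref{Main Theorem} together with the asymptotics from Proposition~\ref{Proposition upper bound}.

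A few comparative remarks. The paper works directly with the unruncated plateau weight $g(x)=\epsilon\max(\abs{x},R_0B^{-\rho_1})$ (no $\chi_m$), which is legitimate because $\Omega$ is bounded and hence $e^{\sqrt{B}g}u_1\in L^2(\Omega)$ automatically; your truncation $\chi_m$ is harmless but unnecessary here. The paper's plateau localizes $\nabla g$ to the annular region, whereas your linear weight has $\nabla f_m\neq 0$ also on the central ball; you compensate by absorbing the full $\epsilon^2 B$ term globally, which works equally well. For the central piece $\chi_0^B$ you use the trivial lower bound $\mathcal{Q}(\chi_0^B e^{f_m}u_1)\geq 0$, while the paper uses the sharper $B\lambda_{\mathfrak b}(\delta)$; both yield the same conclusion and yours is marginally simpler. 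Finally, you are right to carry the factor $\sqrt{B}$ in the exponent. As stated, the proposition's weight $e^{\epsilon\abs{\cdot}}$ (without $\sqrt{B}$) is bounded on the bounded set $\Omega$ uniformly in $B$, so the first display would be essentially trivial and could not imply \eqref{Polynomial Agmon estimates}; the paper's own proof in fact establishes $\norm{e^{\sqrt{B}g}u_1}^2\leq C_{R_0,\epsilon}$, so the $\sqrt B$ is intended and the statement of the proposition has a typo. Your final inequality $\sup_{r\geq 0} r^N e^{-2\epsilon\sqrt{B}r}=(N/(2e\epsilon))^N B^{-N/2}$ makes the implication explicit, which is cleaner than the paper's formulation.
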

	\begin{proof}
		This proof can be seen as an adaptation of \ref{Exponential decay eigenfunctions}. Let \(u_1\) be a ground state of \(P^D_{B\mathcal{A}, \Omega}\), and define for  \(\epsilon \in (0, \sqrt{\beta_{\mathfrak{b}} - \lambda_{\mathfrak{b}}(\delta)})\)
		\begin{equation*}
			g(x) = \epsilon \max\paren{\abs{x}, R_0 B^{-\rho_1}},
		\end{equation*}
		which is a real-valued Lipschitz function. Since \(\Omega\) is bounded
		\begin{equation*}
			e^{2\sqrt{B}g}u_1 \in L^2(\Omega) \quad \text{ and } \quad e^{\sqrt{B}g}u_1 \in \mathcal{D}(\mathcal{Q}_{B\mathcal{A}, \Omega}).
		\end{equation*}
		Using integration by parts and \(P^D_{B\mathcal{A}, \Omega} u_1 = \lambda_1(B) u_1\), we can proceed as in \eqref{Integration by parts Agmon} and \eqref{First bound quadratic form exp decay} to obtain 
		\begin{equation}\label{Thm 4 first equation proof}
			\mathcal{Q}_{B\mathcal{A}, \Omega}( e^{\sqrt{B}g}u_1)  = \lambda_1(B) \norm{e^{\sqrt{B}g}u_1}^2_{L^2(\Omega)} + B \norm{\abs{\nabla g}e^{\sqrt{B}g}}^2_{L^2(\Omega)} .
		\end{equation}
		Taking the same partition of unity as in Proposition \ref{Lower bound proposition}  and using IMS formula with \(e^{\sqrt{B}g}u_1\) we get
		\begin{equation*}
			\begin{split}
				\mathcal{Q}_{B\mathcal{A}, \Omega}( e^{\sqrt{B}g}u_1) \geq &  \paren{B\lambda_{\mathfrak{b}}(\delta) - CR_0^ {-2} B^{2\rho_1}}\int_{D(0, R_0 B^{-\rho_1})} \abs{e^{\sqrt{B}g}u_1}^ 2 \, dx\\
				& +\paren{B\beta_{\mathfrak{b}} - CR_0^ {-2} B^{2\rho_1}}\int_{\Omega\setminus D(0, R_0 B^{-\rho_1})} \abs{e^{\sqrt{B}g}u_1}^ 2 \, dx. 
			\end{split}
		\end{equation*}
		Combining this expression with \eqref{Thm 4 first equation proof} together with Proposition \ref{Proposition upper bound} and taking \(\rho_1 = \frac{1}{2}\)
		\begin{multline}\label{Mid step exp decay monotonicity}
			\paren{\lambda_{\mathfrak{b}}(\delta) - CR_0^ {-2}}\int_{D(0, R_0 B^{-\frac{1}{2}})} \abs{e^{\sqrt{B}g}u_1}^ 2 \, dx \\
			+\paren{\beta_{\mathfrak{b}} - CR_0^{-2} }\int_{\Omega\setminus D(0, R_0 B^{-\frac{1}{2}})} \abs{e^{\sqrt{B}g}u_1}^ 2 \, dx \\
			\leq \paren{\lambda_{\mathfrak{b}}(\delta) +C_2\underbrace{B^ {-\frac{1}{2}}}_{=o(1)}}  \norm{e^{\sqrt{B}g}u_1}^2_{L^2(\Omega)} + \norm{\abs{\nabla g}e^{\sqrt{B}g}}^2_{L^2(\Omega)} .
		\end{multline}
		By construction, \(\abs{\nabla g}\leq \epsilon\) and \(\text{supp }(\nabla g) \subset D(0, R_0B^{-\frac{1}{2}})\), which means
		\begin{equation*}
			\norm{\abs{\nabla g}e^{\sqrt{B}g}}^2_{L^2(\Omega)}  \leq \epsilon \int_{ D(0, R_0B^{-\frac{1}{2}})} \abs{e^{\sqrt{B}g}u_1}^ 2 \, dx .
		\end{equation*}
		Plugging this into \eqref{Mid step exp decay monotonicity} and simplifying terms we get
		\begin{multline*}
			\paren{\beta_{\mathfrak{b}}- \lambda_{\mathfrak{b}}(\delta) - \epsilon^2 - CR_0^{-2} }\int_{\Omega\setminus D(0, R_0 B^{-\frac{1}{2}})} \abs{e^{\sqrt{B}g}u_1}^ 2 \, dx \\
			\leq \paren{CR_0^ {-2} +C_2\underbrace{B^ {-\frac{1}{2} }}_{=o(1)}} \int_{ D(0, R_0B^{-\frac{1}{2}})} \abs{e^{\sqrt{B}g}u_1}^ 2 \, dx \\
			\leq \paren{CR_0^ {-2} +C_2\underbrace{B^ {-\frac{1}{2} }}_{=o(1)}} e^{2R_0} \int_{ D(0, R_0B^{-\frac{1}{2}})} \abs{u_1}^ 2 \, dx, 
		\end{multline*}
		then for \(R_0>0\) large enough we have that \(\paren{\beta_{\mathfrak{b}}- \lambda_{\mathfrak{b}}(\delta) - \epsilon^2 - CR_0^{-2} }>0\) and this means that we can find a constant \(C_{R_0, \epsilon}\) such that 
		\begin{equation}\label{Monot exp decay function}
			\norm{u_1 e^ {\epsilon\abs{\cdot}}}^2_{L^2(\Omega)} \leq C_{R_0, \epsilon},
		\end{equation}
		since \(\epsilon \abs{x}\leq \epsilon g(x)\). Similarly, we can combine \eqref{Thm 4 first equation proof} and \eqref{Monot exp decay function},
		\begin{equation}
			\norm{u_1 e^ {\epsilon\abs{\cdot}}}^2_{L^2(\Omega)} + B^ {-1} \mathcal{Q}_{B\mathcal{A}, \Omega}(u_1 e^ {\epsilon\abs{\cdot}})\leq C'_{R_0, \epsilon},
		\end{equation}
		for some constant \(C'_{R_0, \epsilon}>0\). 
	\end{proof}
	\begin{theorem}\label{Monotonicity theorem}
		Let \(\mathfrak{b} =  (1, b)\) with \(b\in (-1,0)\) and \(\delta_0 \in (0, \frac{\pi}{2})\) such that Theorem \ref{Main Theorem} holds for \(\mathcal{L}_{\mathfrak{b}, \delta}\) with \(\delta < \delta_0\). We have
		\begin{equation}\label{Monotonicity equation}
			\lim_{B\rightarrow +\infty} \lambda'_{1,+}(B) = \lim_{B\rightarrow +\infty} \lambda'_{1,-} (B)= \lambda_{\mathfrak{b}}(\delta).
		\end{equation}
		In particular \(B\mapsto \lambda_1(B)\) is strictly increasing for large \(B\). 
	\end{theorem}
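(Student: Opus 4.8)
The plan is the classical Fournais--Helffer scheme. Since $\mathcal D(P^D_{B\mathcal A,\Omega})=H^2(\Omega)\cap H^1_0(\Omega)$ does not depend on $B$ and $B\mapsto P^D_{B\mathcal A,\Omega}$ is polynomial in $B$ with $B$-independent form domain, analytic perturbation theory (see \cite[Appendix C]{FournaisHelfferbook}) shows that near any $B_0>0$ the bottom eigenvalue $\lambda_1$ is the minimum of finitely many real-analytic functions; hence $\lambda'_{1,\pm}(B)$ exist, and for each sign there is a normalized ground state $u_1=u_1(\cdot\,;B)$ of $P^D_{B\mathcal A,\Omega}$ with the Feynman--Hellmann identity
\[
\lambda'_{1,\pm}(B)=2\Re\,\langle -i\mathcal A\,u_1,\,(\nabla-iB\mathcal A)u_1\rangle_{L^2(\Omega)},\qquad \mathcal A=\mathcal A_{\mathfrak b,\delta}.
\]
I would then show that the right-hand side tends to $\lambda_{\mathfrak b}(\delta)$ as $B\to+\infty$; the strict monotonicity for large $B$ follows at once since $\lambda_{\mathfrak b}(\delta)>0$ (a covariantly constant $L^2$ section vanishes, so $\mathcal Q_{\mathfrak b,\delta}$ has no zero mode).

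To compute the limit I would use that $\mathcal A_{\mathfrak b,\delta}$ of \eqref{Aa,b,delta definition} is homogeneous of degree $1$ and that, by the parametrization \eqref{Parametrization}, the interface inside $\Omega$ is a piece of $\mathrm{gr}(g_\delta)$, so $\mathrm{curl}\,\mathcal A_{\mathfrak b,\delta}=\mathbbm 1_{\Omega_1}+b\,\mathbbm 1_{\Omega_2}$ on $\Omega$. Rescaling $y=\sqrt B\,x$ and $w_1(y):=B^{-1/2}u_1(B^{-1/2}y)$, the operator $P^D_{B\mathcal A,\Omega}$ becomes $B$ times the Dirichlet magnetic Laplacian with the (unchanged) potential $\mathcal A_{\mathfrak b,\delta}$ on the expanding domain $\sqrt B\,\Omega$, $w_1$ being its normalized ground state with eigenvalue $B^{-1}\lambda_1(B)$, and the identity above becomes
\[
\lambda'_{1,\pm}(B)=2\Re\,\langle -i\mathcal A_{\mathfrak b,\delta}\,w_1,\,(\nabla-i\mathcal A_{\mathfrak b,\delta})w_1\rangle_{L^2(\sqrt B\,\Omega)}.
\]
The polynomial Agmon estimates of Proposition \ref{Normal Agmon estimates} rescale to $\int_{\sqrt B\,\Omega}|y|^N|w_1|^2\,dy=\mathcal O(1)$ uniformly in $B$, for every $N$, while $B^{-1}\lambda_1(B)=\lambda_{\mathfrak b}(\delta)+o(1)$ by Propositions \ref{Proposition upper bound} and \ref{Lower bound proposition}; together these give a uniform bound on $w_1$ in the form domain and tightness of $|w_1|^2$. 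Thus along any sequence $B_n\to+\infty$, up to extraction, $w_1(\cdot\,;B_n)\to w_\infty$ weakly in the form domain of $\mathcal L_{\mathfrak b,\delta}$ and strongly in $L^2(\R^2)$, with $\mathcal A_{\mathfrak b,\delta}w_1\to\mathcal A_{\mathfrak b,\delta}w_\infty$ strongly in $L^2$ (the moment bounds control the linearly growing weight); since $\sqrt B\,\Omega$ covers any fixed ball for $B$ large, passing to the limit in the eigenvalue equation identifies $w_\infty$ as a normalized ground state of $\mathcal L_{\mathfrak b,\delta}$ on $\R^2$.

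In the displayed Feynman--Hellmann expression, $\mathcal A_{\mathfrak b,\delta}w_1$ converges strongly and $(\nabla-i\mathcal A_{\mathfrak b,\delta})w_1$ converges weakly in $L^2$, so the bilinear form passes to the limit and $\lim_n\lambda'_{1,\pm}(B_n)=2\Re\,\langle -i\mathcal A_{\mathfrak b,\delta}\,w_\infty,\,(\nabla-i\mathcal A_{\mathfrak b,\delta})w_\infty\rangle_{L^2(\R^2)}$. Applying the very same identity to the model family $B'\mapsto\mathcal L_{B'\mathfrak b,\delta}=-(\nabla-iB'\mathcal A_{\mathfrak b,\delta})^2$ at $B'=1$, whose bottom of the spectrum is exactly $B'\lambda_{\mathfrak b}(\delta)$ by the dilation $U_{B'}$ of Remark \ref{Remark b cases} (the cone $\Omega_{1,\delta}\cup\Omega_{2,\delta}$ being scale invariant), this quantity equals $\lambda_{\mathfrak b}(\delta)$ for any ground state $w_\infty$. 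Since the limit is independent of the subsequence and of the chosen sign, \eqref{Monotonicity equation} follows; hence $\lambda'_{1,\pm}(B)\ge\tfrac12\lambda_{\mathfrak b}(\delta)>0$ for $B$ large, and a continuous function with positive one-sided derivatives is strictly increasing, so $B\mapsto\lambda_1(B)$ is strictly increasing for large $B$. The main obstacle is this compactness step: controlling $w_1$ uniformly on the unbounded domain $\sqrt B\,\Omega$ (which is exactly the purpose of the rescaled Agmon estimates) and checking that the available weak and strong convergences are strong enough to pass to the limit in the $\mathcal A_{\mathfrak b,\delta}$-weighted bilinear form — delicate because $\mathcal A_{\mathfrak b,\delta}$ grows linearly, so the polynomial moment bounds of Proposition \ref{Normal Agmon estimates} are essential.
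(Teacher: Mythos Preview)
Your argument is correct, but it takes a genuinely different route from the paper's. The paper never passes to a limit of rescaled eigenfunctions; instead it inserts the algebraic identity
\[
2\mathcal A\cdot(\nabla-iB\mathcal A)=\frac{-i}{\epsilon}\bigl(P^D_{(B+\epsilon)\mathcal A,\Omega}-P^D_{B\mathcal A,\Omega}\bigr)+i\epsilon|\mathcal A|^2-\nabla\cdot\mathcal A
\]
into the Feynman--Hellmann formula. Taking the appropriate imaginary part and using $\mathcal Q^D_{(B+\epsilon)\mathcal A,\Omega}(u_1)\ge\lambda_1(B+\epsilon)$ yields, for $\epsilon>0$,
\[
\lambda'_{1,+}(B)\ge\frac{\lambda_1(B+\epsilon)-\lambda_1(B)}{\epsilon}-\epsilon\int_\Omega|\mathcal A|^2|u_1|^2\,dx,
\]
and the Agmon bound $\int_\Omega|\mathcal A|^2|u_1|^2=\mathcal O(B^{-1})$ together with the asymptotics of Theorem~\ref{Theorem 2} (choosing $\epsilon=\eta B$ and then $\eta\to0$) gives $\liminf\lambda'_{1,+}\ge\lambda_{\mathfrak b}(\delta)$; the reverse inequality for $\lambda'_{1,-}$ is obtained with $\epsilon<0$, and one concludes via $\lambda'_{1,+}\le\lambda'_{1,-}$.

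Your compactness approach is heavier but more ``explanatory'': it actually identifies the rescaled ground state with a ground state $w_\infty$ of the model $\mathcal L_{\mathfrak b,\delta}$ and reads off the limit of the Feynman--Hellmann quantity from the scaling law $\inf\mathrm{Spec}(\mathcal L_{B'\mathfrak b,\delta})=B'\lambda_{\mathfrak b}(\delta)$. The key step that makes this work---and that you correctly flag---is that the moment bounds of Proposition~\ref{Normal Agmon estimates} give both tightness of $|w_1|^2$ and strong $L^2$ convergence of $\mathcal A_{\mathfrak b,\delta}w_1$, so the weighted bilinear form passes to the limit. For the final identification, note that $B'\mapsto\mathcal Q_{B'\mathfrak b,\delta}(w_\infty)$ is a quadratic polynomial in $B'$ (finite since $w_\infty$ decays exponentially by Theorem~\ref{Exponential decay eigenfunctions}) which touches $B'\mapsto B'\lambda_{\mathfrak b}(\delta)$ from above at $B'=1$; equality of derivatives there is exactly your claimed identity, and this holds for any ground state $w_\infty$, so no simplicity assumption is needed. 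The paper's route is shorter because it bypasses all of this by feeding the already-established eigenvalue asymptotics back into a finite-difference inequality; your route would be preferable if one wanted additional information about the ground states themselves.
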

	\begin{proof}
		By analytic perturbation theory we know that there exist an \(\epsilon > 0\) and analytic functions
		\begin{equation*}
			(B-\epsilon, B+\epsilon)\ni \theta \mapsto u_j(\cdot \,  ; \theta)\in H^2(\Omega)\cap H_0^1(\Omega)\setminus\{0\},
		\end{equation*}
		\begin{equation*}
			(B-\epsilon, B+\epsilon)\ni \theta \mapsto E_j \in \R, 
		\end{equation*}    
		for \(j\in \{1, \ldots, n\}\) and some \(n\in \N\). For \(\theta >B\) let \(E_{j_+}(\theta) = \min_{j\in \{1, \ldots, n\}} E_j(\theta)\) and, for \(\theta<B\), let \(E_{j_-}(\theta) = \min_{j\in \{1, \ldots, n\}} E_j(\theta)\). Observe that \(\lambda'_{1,+}(B) = E'_{j_+}(B)\) and \(\lambda'_{1,-}(B) = E'_{j_-}(B)\). By first order perturbation theory (see \cite[Proposition 2.3.1]{FournaisHelfferbook}) we know
		\begin{equation}\label{Derivative = 2 Im}
			\lambda'_{1,\pm}(B) =2\Im\langle u_{j_{\pm}}, \mathcal{A}\cdot (\nabla -i B\mathcal{A})u_{j_{\pm}}\rangle_{L^ 2(\Omega)}.
		\end{equation}
		Since 
		\begin{equation*}
			2 \mathcal{A}\cdot (\nabla -i B\mathcal{A})= \frac{-i}{\epsilon} \big (P^D_{(B+\epsilon)\mathcal{A}, \Omega} -P^D_{B\mathcal{A}, \Omega} \big ) + i \epsilon \abs{\mathcal{A}}^2 - \nabla \cdot \mathcal{A}, 
		\end{equation*}
		we get, for \(B>0\),
		\begin{equation}
			\lambda'_{1,+}(B) \geq \frac{\lambda_1(B+\epsilon)- \lambda_1(B)}{\epsilon} - \epsilon \int_{\Omega} \abs{\mathcal{A}}^2 \abs{u_{j_+}}^2 \, dx,
		\end{equation}
		where \(\epsilon>0\). Recall that \(\mathcal{A}= (\mathcal{A}_{\mathfrak{b}, \delta},0)\) where \(\mathcal{A}_{\mathfrak{b}, \delta}\) was defined in \eqref{Aa,b,delta definition}, thus there exists a constant \(C_{\mathfrak{b}, \delta}>0\) such that \(\abs{\mathcal{A}(x)}\leq C_{\mathfrak{b}, \delta} \abs{x}\) for all \(x\in \Omega\). Then, by Proposition \ref{Normal Agmon estimates} we have that
		\begin{equation}
			\int_{\Omega} \abs{\mathcal{A}}^2 \abs{u_{j_+}}^2 \, dx \leq \tilde{C}_{\mathfrak{b}, \delta} B^{-1},
		\end{equation}
		for some constant \( \tilde{C}_{\mathfrak{b}, \delta}>0\). Take \(\epsilon = \eta B\) for some \(\eta >0\). Using the asymptotics of \(\lambda_1(B)\) established in Theorem \ref{Theorem 2}, we obtain
		\begin{equation}
			\liminf_{B\rightarrow +\infty} \lambda'_{1,+}(B) \geq \lambda_{\mathfrak{b}}(\delta)- \eta \tilde{C}_{\mathfrak{b}, \delta}. 
		\end{equation}
		Since the choice of \(\eta \) was arbitrary we have
		\begin{equation}\label{Monotonicity liminf}
			\liminf_{B\rightarrow +\infty} \lambda'_{1,+}(B) \geq \lambda_{\mathfrak{b}}(\delta). 
		\end{equation}
		Similarly, for \(\epsilon <0\) we can start from \eqref{Derivative = 2 Im} to get to
		\begin{equation}\label{Monotonicity limsup}
			\limsup_{B\rightarrow +\infty} \lambda'_{1,-}(B) \leq \lambda_{\mathfrak{b}}(\delta),
		\end{equation}
		where the inequalities are reversed due to the sign of \(\epsilon\). By perturbation theory \(\lambda'_{1,+}(B)\leq \lambda'_{1,-}(B)\). Hence, combining this with \eqref{Monotonicity liminf} and \eqref{Monotonicity limsup} we finish the proof. 
	\end{proof}
	\appendix
	\section{Bottom of the essential spectrum}\label{AppendixA}
	This section follows \cite[Section 3 and Appendix A]{AssaadBreakdown} to give a version of Persson's lemma \cite{Persson'sLemma} for \(\mathcal{L}_{\mathfrak{b},  \delta}\). This known result indicates that the essential spectrum of a self-adjoint operator is not affected by what happens in compact sets. Note that \(\La\) is self-adjoint, positive and satisfies the IMS--Formula. We will omit most of the details since the proofs are similar to the ones that can be found in \cite{AssaadBreakdown}. Before starting with the computation of the bottom of the essential spectrum of \(\La\), it is useful to introduce some notation. 
	\begin{equation}\label{M_R}
		\mathcal{M}_R := \{u\in \mathcal{D}(\mathcal{L}_{\mathfrak{b},  \delta}) : \text{supp }u \subset \{|x|>R\}\}, 
	\end{equation}
	\begin{equation}\label{Sigma_R}
		\Sigma_R := \inf \{\langle u , \mathcal{L}_{\mathfrak{b},  \delta}u\rangle : u \in \mathcal{M}_R, \norm{u}_{L^2(\R^2)}=1\},
	\end{equation}
	and we also define 
	\begin{equation}\label{1.8}
		\Sigma := \lim_{r\rightarrow +\infty}\Sigma_R = \sup_{r>0}\Sigma_R ,
	\end{equation}
	which is called ionization threshold when we deal with Schr\"odinger operators for atoms and molecules. Note that \(\Sigma_R\) is an increasing function since \(\mathcal{M}_{R_2}\subset \mathcal{M}_{R_1}\) if \(R_1 < R_2\), this means that \(\Sigma \in (0, +\infty]\) because \(\mathcal{L}_{\mathfrak{b},\delta}\) is positive.
	\begin{defin}[Weyl sequence]\label{Weyl sequence}
		\((u_n)_{n} \subset \mathcal{D}(\mathcal{L}_{\mathfrak{b},  \delta})\) is called a \textbf{Weyl sequence} of \(\lambda \in \R\) and \(\mathcal{L}_{\mathfrak{b},  \delta}\) if \(\norm{u_n}_{\mathcal{H}} =1 \) ,  \(u_n \rightharpoonup 0 \) and \(\norm{(\mathcal{L}_{\mathfrak{b},  \delta}-\lambda)u_n}_{L^2(\R^2)}\rightarrow 0\). 
	\end{defin}
	By Weyl's criterion \cite[Section 25.10]{GustafsonSigal}, we know that 
	\begin{equation*}
		\text{Spec}_{\text{ess}}(\La) = \{\lambda\in \R \text{ such that there exists a Weyl sequence of }\lambda \text{ and } \La\}.
	\end{equation*} 
	\begin{lemma}\label{Lemma 1 Appendix}
		Let \((u_n)_{n} \) be a Weyl sequence of \(\lambda \in \R\) and the operator \(\mathcal{L}_{\mathfrak{b},  \delta}\). Then for all \(R>0\), we have that \(\lim_{n\rightarrow + \infty}\norm{u_n}_{L^2(D(0,R))}=0\). 
	\end{lemma}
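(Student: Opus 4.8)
The plan is to exploit the local compactness of the magnetic form domain, i.e.\ the classical fact that weak limits cannot survive on compact sets once a uniform form bound is available. First I would use the defining properties of the Weyl sequence to control $u_n$ in the form domain. Since $u_n \in \mathcal{D}(\mathcal{L}_{\mathfrak{b},\delta})$, we may write
\[
\mathcal{Q}_{\mathfrak{b},\delta}(u_n) = \langle (\mathcal{L}_{\mathfrak{b},\delta}-\lambda)u_n,\, u_n\rangle_{L^2(\R^2)} + \lambda\,\norm{u_n}_{L^2(\R^2)}^2 ,
\]
and using $\norm{u_n}_{L^2(\R^2)}=1$ together with $\norm{(\mathcal{L}_{\mathfrak{b},\delta}-\lambda)u_n}_{L^2(\R^2)}\to 0$ and Cauchy--Schwarz, the sequence $\big(\mathcal{Q}_{\mathfrak{b},\delta}(u_n)\big)_n$ is bounded. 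Hence $\big((\nabla - i\mathcal{A}_{\mathfrak{b},\delta})u_n\big)_n$ is bounded in $L^2(\R^2,\R^2)$.

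Next, fix $R>0$ and choose $\chi\in C_c^\infty(\R^2)$ with $\chi\equiv 1$ on $D(0,R)$ and $\supp\chi\subset D(0,2R)$. Then $\chi u_n\in\mathcal{D}(\mathcal{Q}_{\mathfrak{b},\delta})$ is supported in $D(0,2R)$, and from the Leibniz rule
\[
(\nabla - i\mathcal{A}_{\mathfrak{b},\delta})(\chi u_n) = \chi\,(\nabla - i\mathcal{A}_{\mathfrak{b},\delta})u_n + (\nabla\chi)\,u_n
\]
we see that $(\nabla - i\mathcal{A}_{\mathfrak{b},\delta})(\chi u_n)$ is bounded in $L^2$. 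Since $\mathcal{A}_{\mathfrak{b},\delta}$ is piecewise affine, it is bounded on the compact set $\overline{D(0,2R)}$, so $\nabla(\chi u_n) = (\nabla - i\mathcal{A}_{\mathfrak{b},\delta})(\chi u_n) + i\,\mathcal{A}_{\mathfrak{b},\delta}\,\chi u_n$ is bounded in $L^2(D(0,2R),\R^2)$; together with $\norm{\chi u_n}_{L^2}\le 1$ this shows that $(\chi u_n)_n$ is bounded in $H^1(D(0,2R))$.

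By the Rellich--Kondrachov theorem, $H^1(D(0,2R))$ embeds compactly into $L^2(D(0,2R))$, so every subsequence of $(\chi u_n)_n$ has a further subsequence converging strongly in $L^2$. Because $u_n\rightharpoonup 0$ weakly in $L^2(\R^2)$, also $\chi u_n\rightharpoonup 0$ weakly in $L^2(D(0,2R))$, and hence any such strong limit must equal $0$. Since every subsequence admits a sub-subsequence converging to the same limit $0$, the whole sequence satisfies $\chi u_n\to 0$ in $L^2(\R^2)$. In particular $\norm{u_n}_{L^2(D(0,R))}\le \norm{\chi u_n}_{L^2(\R^2)}\to 0$, which is the claim.

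The only mildly delicate point is obtaining a genuine $H^1$ bound for $\chi u_n$ rather than merely a bound on the magnetic gradient; this is where local boundedness of $\mathcal{A}_{\mathfrak{b},\delta}$ enters. One could instead invoke the diamagnetic inequality $|\nabla|\chi u_n||\le |(\nabla - i\mathcal{A}_{\mathfrak{b},\delta})(\chi u_n)|$ a.e.\ to control $|\chi u_n|$ in $H^1$, but then one would have to argue separately that the $L^2$-limit of $|\chi u_n|$ vanishes, which the weak convergence $u_n\rightharpoonup 0$ does not immediately yield; the route via local boundedness of the potential is cleaner. I expect no further obstacle: this is exactly the standard argument used in \cite{AssaadBreakdown}, transplanted to $\mathcal{L}_{\mathfrak{b},\delta}$.
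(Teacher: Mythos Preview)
Your argument is correct and is precisely the standard local-compactness proof the paper defers to by citing \cite[Lemma A.3]{AssaadBreakdown}: bound the magnetic form via the Weyl condition, localize with a cutoff, use local boundedness of $\mathcal{A}_{\mathfrak{b},\delta}$ to pass to an $H^1$ bound, and conclude by Rellich--Kondrachov combined with the weak convergence $u_n\rightharpoonup 0$. Nothing is missing.
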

	\begin{proof}
		We omit this proof. We refer to\cite[Lemma A.3]{AssaadBreakdown} for details of a similar proof. 
	\end{proof}
	\begin{theorem}\label{Theorem Persson's Lemma}
		The essential spectrum of \(\La\) satisfies
		\begin{equation*}
			\inf\text{Spec}_{\text{ess}} (\La) = \Sigma,
		\end{equation*}
		where \(\Sigma \) was defined in \eqref{1.8}. 
	\end{theorem}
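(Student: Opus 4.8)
The plan is to establish the two inequalities $\inf\text{Spec}_{\text{ess}}(\La)\ge\Sigma$ and $\inf\text{Spec}_{\text{ess}}(\La)\le\Sigma$ separately, following the classical proof of Persson's lemma (in the spirit of \cite[Section~3 and Appendix~A]{AssaadBreakdown}) and relying on Weyl's criterion, Definition~\ref{Weyl sequence}, Lemma~\ref{Lemma 1 Appendix}, the IMS localization formula satisfied by $\La$ (cf.\ Lemma~\ref{IMS formula 1}), and the positivity of $\La$. Since $\Sigma\in(0,+\infty]$ by \eqref{1.8}, the inequality $\inf\text{Spec}_{\text{ess}}(\La)\le\Sigma$ is vacuous when $\Sigma=+\infty$, so I would assume $\Sigma<+\infty$ for that half.

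To prove $\inf\text{Spec}_{\text{ess}}(\La)\ge\Sigma$, I would fix $\lambda\in\text{Spec}_{\text{ess}}(\La)$ with a Weyl sequence $(u_n)_n$, fix $R>0$, and pick a smooth partition of unity $\chi_{1,R}^2+\chi_{2,R}^2=1$ on $\R^2$ with $\chi_{1,R}$ supported in $\{\,\abs{x}<2R\,\}$, $\chi_{2,R}$ supported in $\{\,\abs{x}>R\,\}$, and $\abs{\nabla\chi_{1,R}},\abs{\nabla\chi_{2,R}}$ supported in the compact annulus $\{\,R\le\abs{x}\le 2R\,\}$. The IMS formula then gives
\[
\mathcal{Q}_{\mathfrak{b},\delta}(u_n)=\mathcal{Q}_{\mathfrak{b},\delta}(\chi_{1,R}u_n)+\mathcal{Q}_{\mathfrak{b},\delta}(\chi_{2,R}u_n)-\sum_{j=1}^{2}\norm{\,\abs{\nabla\chi_{j,R}}\,u_n}_{L^2(\R^2)}^2 .
\]
By Lemma~\ref{Lemma 1 Appendix} the two error terms, being squared $L^2$-norms over a fixed compact set, tend to $0$, and likewise $\norm{\chi_{2,R}u_n}_{L^2(\R^2)}\to 1$; discarding the nonnegative term $\mathcal{Q}_{\mathfrak{b},\delta}(\chi_{1,R}u_n)$ and using $\chi_{2,R}u_n\in\mathcal{M}_R$ together with \eqref{Sigma_R} yields $\mathcal{Q}_{\mathfrak{b},\delta}(u_n)\ge\Sigma_R\norm{\chi_{2,R}u_n}_{L^2(\R^2)}^2-o(1)$. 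On the other hand $\mathcal{Q}_{\mathfrak{b},\delta}(u_n)=\langle u_n,\La u_n\rangle=\lambda+\langle u_n,(\La-\lambda)u_n\rangle\to\lambda$ since $\norm{(\La-\lambda)u_n}_{L^2(\R^2)}\to 0$, so letting $n\to\infty$ gives $\lambda\ge\Sigma_R$, and as $R>0$ was arbitrary, $\lambda\ge\Sigma$ by \eqref{1.8}.

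To prove $\inf\text{Spec}_{\text{ess}}(\La)\le\Sigma$, I would fix $\varepsilon>0$ and, for each $R$, use $\Sigma_R\le\Sigma$ and \eqref{Sigma_R} to choose $v_R\in\mathcal{M}_R$ with $\norm{v_R}_{L^2(\R^2)}=1$ and $\langle v_R,\La v_R\rangle<\Sigma+\tfrac{\varepsilon}{2}$. Cutting $v_R$ off by a smooth radial function equal to $1$ on $\{\,\abs{x}\le S\,\}$ and supported in $\{\,\abs{x}\le 2S\,\}$, and bounding the resulting localization error by $\mathcal{O}(S^{-2})$ via the IMS formula — the discarded $L^2$-mass being arbitrarily small for $S$ large since $v_R\in L^2(\R^2)$ — I may assume, after renormalization, that $v_R$ is compactly supported in an annulus $\{\,R<\abs{x}<R'\,\}$, still with $\norm{v_R}_{L^2(\R^2)}=1$ and $\langle v_R,\La v_R\rangle<\Sigma+\varepsilon$. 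Choosing the radii $R_1<R_1'<R_2<R_2'<\cdots$ to grow fast enough makes the supports of $v_{R_1},\dots,v_{R_N}$ pairwise separated, so these functions are orthonormal and the separation kills every cross term in $\mathcal{Q}_{\mathfrak{b},\delta}$; hence on the $N$-dimensional subspace they span the Rayleigh quotient of $\mathcal{Q}_{\mathfrak{b},\delta}$ stays strictly below $\Sigma+\varepsilon$. By the min-max principle this forces the first $N$ min-max values $\lambda_1(\La),\dots,\lambda_N(\La)$ to lie below $\Sigma+\varepsilon$ for every $N$, hence $\inf\text{Spec}_{\text{ess}}(\La)=\lim_{N\to\infty}\lambda_N(\La)\le\Sigma+\varepsilon$; letting $\varepsilon\to0$ concludes. (An alternative for this half: if $\inf\text{Spec}_{\text{ess}}(\La)>\Sigma$, pick $\mu$ with $\Sigma<\mu<\inf\text{Spec}_{\text{ess}}(\La)$, so that only finitely many eigenfunctions lie below $\mu$; projecting the $v_R$ off this fixed finite-dimensional subspace is a vanishing correction as $R\to\infty$, and produces unit vectors of energy below $\mu$ inside the spectral subspace $\mathbbm{1}_{[\mu,+\infty)}(\La)L^2(\R^2)$, a contradiction.)

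The main difficulty is not in either of the two arguments above, which are just the standard proof of Persson's lemma transported to $\La$, but in securing the inputs they use: that $\La$ is self-adjoint and positive, that it obeys an IMS localization formula in spite of $\mathcal{A}_{\mathfrak{b},\delta}$ being only $H^1_{\mathrm{loc}}$, and that every Weyl sequence loses its $L^2$-mass on each ball (Lemma~\ref{Lemma 1 Appendix}). These are precisely the points set up, after \cite[Section~3 and Appendix~A]{AssaadBreakdown}, before the statement of Theorem~\ref{Theorem Persson's Lemma}; granted them, what remains is the routine bookkeeping of the localization errors and of the radial cutoff indicated above.
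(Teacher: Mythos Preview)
Your proposal is correct and is precisely the standard Persson-type argument the paper has in mind: the paper omits the proof entirely and refers to \cite[Theorem~A.1]{AssaadBreakdown}, whose proof proceeds exactly along the lines you describe (Weyl sequence plus IMS localization and Lemma~\ref{Lemma 1 Appendix} for the lower bound, construction of an infinite orthonormal family with separated annular supports for the upper bound). There is nothing to add.
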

	\begin{proof}
		We omit this proof. We refer to \cite[Theorem A.1]{AssaadBreakdown} for details of a similar proof. 
	\end{proof}
	\begin{lemma}\label{Lemma appendix}
		Let \(\mathfrak{b} = (1, b)\) or \(\mathfrak{b} = (b,1)\) with \(b\in [-1,0)\), then there exists a constant \(C>0\) independent of \(b\), such that for all \(R>0\) and any non zero function \(u\in \mathcal{M}_R\) we have
		\begin{equation}\label{Bound C/R^2}
			\mathcal{Q}_{\mathfrak{b},\delta}(u) \geq \bigg (\beta_{\mathfrak{b}} -\frac{C}{R^2}\bigg) \norm{u}^2_{L^2(\R^2)}
		\end{equation}
	\end{lemma}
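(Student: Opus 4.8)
The plan is to exploit the fact that, far away from the origin, the magnetic barrier $\mathrm{gr}(g_\delta)$ looks locally like a single straight line: for $|x|>R$ the set $\{|x|>R\}$ decomposes into the part to the right of the corner, where the barrier is the horizontal ray $x_2=0$, $x_1>0$, and the part to the left, where the barrier is the ray $x_2=x_1\tan\delta$, $x_1<0$. On each of these two pieces the operator $\mathcal{L}_{\mathfrak{b},\delta}$ coincides (after a rotation and a gauge transformation) with the flat model operator $\mathcal{L}_{\mathfrak{b},0}$, whose quadratic form is bounded below by $\beta_{\mathfrak{b}}$ on all of $L^2(\mathbb{R}^2)$ by \eqref{Beta a definition} and \eqref{beta as 1-D}. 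So the strategy is a standard IMS partition-of-unity localization that reduces the problem to these two flat half-plane situations plus an error controlled by the gradients of the cut-offs.

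Concretely, I would first fix $R>0$ and introduce a smooth partition of unity $\chi_1^2+\chi_2^2+\chi_3^2=1$ on $\{|x|>R\}$, where $\chi_1$ is supported in a conical neighbourhood of the positive-$x_1$ ray (where the barrier is $\{x_2=0\}$), $\chi_2$ is supported in a conical neighbourhood of the left ray $\{x_2=x_1\tan\delta,\ x_1<0\}$, and $\chi_3$ is supported away from the barrier entirely (in the interior of $\Omega_{1,\delta}$ or $\Omega_{2,\delta}$); one can arrange $\sum_j|\nabla\chi_j|^2\le C/R^2$ with $C$ independent of $R$ and, crucially, independent of $b$ (the constants only involve the opening angles, not the field intensities). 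By the IMS formula (Lemma \ref{IMS formula 1}, which applies since $\mathcal{L}_{\mathfrak{b},\delta}$ is of Schrödinger type),
\[
\mathcal{Q}_{\mathfrak{b},\delta}(u)=\sum_{j=1}^{3}\mathcal{Q}_{\mathfrak{b},\delta}(\chi_j u)-\sum_{j=1}^{3}\|\,|\nabla\chi_j|\,u\|_{L^2}^2\ \ge\ \sum_{j=1}^{3}\mathcal{Q}_{\mathfrak{b},\delta}(\chi_j u)-\frac{C}{R^2}\|u\|_{L^2}^2 .
\]
For the term $\mathcal{Q}_{\mathfrak{b},\delta}(\chi_1 u)$, the function $\chi_1 u$ is supported in a region where $\mathrm{curl}\,\mathcal{A}_{\mathfrak{b},\delta}=b_1\mathbbm{1}_{x_2<0}+b_2\mathbbm{1}_{x_2>0}$ exactly, i.e. the flat configuration $\delta=0$; extending by zero and using $\beta_{\mathfrak{b}}=\inf\mathrm{Spec}(\mathcal{L}_{\mathfrak{b},0})$ gives $\mathcal{Q}_{\mathfrak{b},\delta}(\chi_1 u)\ge\beta_{\mathfrak{b}}\|\chi_1 u\|_{L^2}^2$. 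For $\mathcal{Q}_{\mathfrak{b},\delta}(\chi_2 u)$, one first rotates by $\delta$ so that the left ray becomes horizontal and applies a gauge transformation (as in \eqref{First change of Gauge}, whose point is precisely that $\mathcal{A}_{\mathfrak{b},\delta}$ is gauge-equivalent to $\sigma_{\mathfrak{b},\delta}\mathcal{A}$ with $\mathcal{A}=(-x_2,0)$), again arriving at the flat model and the bound $\mathcal{Q}_{\mathfrak{b},\delta}(\chi_2 u)\ge\beta_{\mathfrak{b}}\|\chi_2 u\|_{L^2}^2$. For $\mathcal{Q}_{\mathfrak{b},\delta}(\chi_3 u)$, supported in a single half-plane where the field is the constant $b_1$ or $b_2$, the quadratic form is bounded below by $|b_i|\ge\beta_{\mathfrak{b}}$ (using $\beta_{\mathfrak{b}}<|b|$ from Remark \ref{Remark b cases}(2) and $\beta_{\mathfrak{b}}=\Theta_0<1$ in case (3)). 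Summing and using $\sum_j\|\chi_j u\|^2=\|u\|^2$ yields \eqref{Bound C/R^2}.

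The main obstacle, and the only place that needs care, is the gauge/rotation bookkeeping for $\chi_2 u$: the rotation $S$ by angle $\delta$ maps $\mathcal{A}=(-x_2,0)$ to a vector potential $\tilde{\mathcal{A}}$ with $\mathrm{curl}\,\tilde{\mathcal{A}}=-1$ but which is not itself of the simple form $(-x_2,0)$, so one must add an explicit gradient (a quadratic phase, exactly the function $\varphi$ introduced just before Lemma \ref{Lemma 1}) to straighten it out, and one must check that this phase is well-defined and $H^1_{\mathrm{loc}}$ across the support of $\chi_2$. This is entirely analogous to the computation carried out in the proof of Lemma \ref{Lemma 1}, so I would simply invoke that computation. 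A secondary point is to make sure the partition of unity can be chosen with the opening angles fixed once and for all (independent of $R$) so that the constant $C$ is genuinely uniform in $R$ and in $b$; since $\delta$ is a fixed small parameter this is harmless.
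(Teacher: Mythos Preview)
Your proposal is correct and follows essentially the same route as the paper: an angular (conical) partition of unity on $\{|x|>R\}$, the IMS formula, and comparison of each localized piece with the flat model $\mathcal{L}_{\mathfrak{b},0}$, the gradient error producing the $C/R^{2}$ term. The only noteworthy difference is that the paper uses just \emph{two} purely angular cut-offs $\hat\chi_1,\hat\chi_2$ (one cone around each ray of the barrier) rather than three: it does not separate out an ``interior'' piece $\chi_3$ with constant field, since each of the two cones, extended by zero, is already compared directly to $\mathcal{Q}_{\mathfrak{b},0}$ via the rotation invariance of the flat model, and this already yields the bound $\beta_{\mathfrak{b}}$ without invoking the Landau level estimate $|b_i|\ge\beta_{\mathfrak{b}}$. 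Your extra piece is harmless but unnecessary; conversely, the paper is terser about the rotation/gauge step for the left cone, which you spell out more carefully (and correctly) by pointing to the computation in Lemma~\ref{Lemma 1}.
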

	\begin{proof}
		The main idea is introducing a partition of unity to use the information we know about the operators introduced in Section \ref{Introduction section} together with IMS-Formula. We introduce a partition of unity \((\hat{\chi}_j)\) for the interval \([0,2\pi]\) such that\footnote{The partition of unity could be adapted and make it dependent on \(\delta\), but for this particular case there is no need.}
		\begin{equation*}
			\supp \hat{\chi}_1 \subset \bigg[0, \frac{2\pi}{3} \bigg]\cup \bigg[\frac{4\pi}{3}, 2\pi\bigg] \text{ and }  \supp \hat{\chi}_2 \subset  \bigg[\frac{\pi}{3}, \frac{5\pi}{3}\bigg], 
		\end{equation*}
		with \(\abs{ \hat{\chi}_1 (\theta)}^2 + \abs{ \hat{\chi}_2(\theta)}^2 =1\) and \(\abs{ \hat{\chi}'_1 (\theta)}^2 + \abs{ \hat{\chi}'_2(\theta)}^2 \leq C\) for all \(\theta\in [0, 2\pi]\) where \(C>0\) is some positive constant. We define the partition of unity in polar coordinates as
		\begin{equation*}
			\chi_j (r, \theta) := \hat{\chi}_j (\theta) \qquad j\in \{1,2\}, 
		\end{equation*}
		for all \( (r, \theta)\in (0, +\infty) \times (0,2\pi)\). Denote by
		\begin{equation*}
			\Tilde{\chi}_j(x_1, x_2) = \chi_j(r, \theta) \qquad j\in \{1,2\}
		\end{equation*}
		the partition of unity considered with cartesian coordinates. Let \(\varphi\in \mathcal{M}_R\setminus\{0\}\). The IMS localization formula asserts that 
		\begin{equation}\label{IMS formula}
			\norm{(\nabla -i \mathcal{A}_{\mathfrak{b}, \delta}) \varphi}^2_{L^2(\R^2)} = \sum_{j=1}^2 \norm{(\nabla -i \mathcal{A}_{\mathfrak{b}, \delta})(\Tilde{\chi}_j\varphi)}^2_{L^2(\R^2)} - \sum_{j=1}^2 \norm{\varphi\abs{\nabla \Tilde{\chi}_j}}^2_{L^2(\R^2)}.
		\end{equation}
		For \((x_1, x_2) \in \R^2\) and \(j\in \{1,2\}\)
		\begin{equation*}
			\abs{\nabla \Tilde{\chi}_j(x_1, x_2) } = \underbrace{\abs{\partial_r \chi_j(r, \theta)}}_{=0} + \frac{1}{r^2} \abs{\partial_{\theta}  \chi_j(r, \theta)}^2 = \frac{1}{r^2} \abs{\partial_{\theta}  \chi_j(r, \theta)}^2. 
		\end{equation*}
		Then, by construction of \(\chi_j(r,\theta)\) and since \( \text{supp }\varphi \subset \{|x|>R\}\), we have 
		\begin{equation}\label{Second term IMS}
			\sum_{j=1}^2 \norm{\varphi\abs{\nabla \Tilde{\chi}_j}}^2_{L^2(\R^2)}\leq \frac{C}{R^2} \norm{\varphi}^2_{L^2(\R^2)}.
		\end{equation}
		For the other term in \eqref{IMS formula} note that extending \(\Tilde{\chi}_j \varphi\) by zero gives a function in the domain of \(\mathcal{Q}_{\mathfrak{b}, 0}\), where this quadratic form was defined \eqref{Q_a,b,delta}. In other words, we can connect \(\Tilde{\chi}_j \varphi\) with the situation of a magnetic step in the whole plane given by \(\mathcal{L}_{\mathfrak{b}, 0}\) (we are using again the rotation invariance of this operator). Hence, for \(j\in\{1,2\}\) 
		\begin{equation}\label{First term IMS}
			\frac{\norm{(\nabla -i \mathcal{A}_{\mathfrak{b}, \delta})(\Tilde{\chi}_j\varphi)}^2_{L^2(\R^2)}}{\norm{\Tilde{\chi}_j\varphi}^2_{L^2(\R^2)}} \geq  \inf_{u\in \mathcal{D}(\mathcal{Q}_{\mathfrak{b},0}) \setminus \{0\}} \frac{\norm{(\nabla -i \mathcal{A}_{\mathfrak{b}, 0})u}^2_{L^2(\R^2)} }{\norm{u}^2_{L^2(\R^2)}} = \beta_{\mathfrak{b}}. 
		\end{equation}
		Combining \eqref{IMS formula}, \eqref{Second term IMS} and \eqref{First term IMS} we obtain
		\begin{equation}
			\mathcal{Q}_{\mathfrak{b}, \delta}(\varphi) \geq\bigg(\beta_{\mathfrak{b}} - \frac{C}{R^2 }\bigg)\norm{\varphi}^2_{L^2(\R^2)}. 
		\end{equation}
	\end{proof}
	\begin{theorem}\label{Essential spectrum Theorem}
		Let \(\mathfrak{b}=(1, b)\) or \(\mathfrak{b}=( b, 1)\) with \(b\in (-1,0)\), then \[\inf\text{Spec}(\mathcal{L}_{\mathfrak{b}, \delta})=\beta_{\mathfrak{b}}.\]
	\end{theorem}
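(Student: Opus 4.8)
The plan is to combine the lower bound of Lemma~\ref{Lemma appendix} with Persson's lemma (Theorem~\ref{Theorem Persson's Lemma}) for the lower inequality and to exhibit an explicit Weyl sequence at energy $\beta_{\mathfrak{b}}$ for the upper one; this identifies $\inf\text{Spec}_{\text{ess}}(\mathcal{L}_{\mathfrak{b},\delta})$ with $\beta_{\mathfrak{b}}$, the quantity announced in Section~\ref{Introduction section}. (For $\mathfrak{b}=(1,b)$ with $\delta$ small, Theorem~\ref{Main Theorem} provides in addition a discrete eigenvalue strictly below $\beta_{\mathfrak{b}}$, so $\beta_{\mathfrak{b}}$ is the bottom of the essential — not necessarily of the full — spectrum; the two coincide, for instance, for $\mathfrak{b}=(b,1)$ with $\delta$ small, where the $\delta^2$-correction in Theorem~\ref{Main Theorem} is positive.)

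\emph{Lower bound.} Let $u\in\mathcal{M}_R$ with $\norm{u}_{L^2(\R^2)}=1$. Since $u\in\mathcal{D}(\mathcal{L}_{\mathfrak{b},\delta})$, $\langle u,\mathcal{L}_{\mathfrak{b},\delta}u\rangle=\mathcal{Q}_{\mathfrak{b},\delta}(u)$, and Lemma~\ref{Lemma appendix} gives $\langle u,\mathcal{L}_{\mathfrak{b},\delta}u\rangle\geq\beta_{\mathfrak{b}}-CR^{-2}$. Hence $\Sigma_R\geq\beta_{\mathfrak{b}}-CR^{-2}$, so by \eqref{1.8} $\Sigma\geq\beta_{\mathfrak{b}}$, and Theorem~\ref{Theorem Persson's Lemma} yields $\inf\text{Spec}_{\text{ess}}(\mathcal{L}_{\mathfrak{b},\delta})=\Sigma\geq\beta_{\mathfrak{b}}$.

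\emph{Upper bound.} The key geometric observation is that $\mathcal{A}_{\mathfrak{b},\delta}=\mathcal{A}_{\mathfrak{b},0}$ on the right half-plane $\{x_1>0\}$: there $g_\delta\equiv 0$ and $\mathbbm{1}_{\R_-}(x_1)=0$, so \eqref{Aa,b,delta definition} reduces to $A_{\mathfrak{b},\delta}(x_1,x_2)=-b_1x_2$ for $x_2<0$ and $=-b_2x_2$ for $x_2>0$, i.e., the $\delta$-independent potential $A_{\mathfrak{b},0}(x_2)$; consequently $\mathcal{L}_{\mathfrak{b},\delta}$ and $\mathcal{L}_{\mathfrak{b},0}$ act identically on functions supported in $\{x_1>0\}$. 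By \eqref{beta as 1-D} and case~\ref{Trapping magnetic step} of Remark~\ref{Remark b cases}, $\beta_{\mathfrak{b}}=\mu_{\mathfrak{b}}(\xi_{\mathfrak{b}})$ with $\mathfrak{h}_{\mathfrak{b}}[\xi_{\mathfrak{b}}]\phi_{\mathfrak{b}}=\beta_{\mathfrak{b}}\phi_{\mathfrak{b}}$ and $\phi_{\mathfrak{b}}\in B^1(\R)$, so the bounded (non-$L^2$) function $w(x):=e^{i\xi_{\mathfrak{b}}x_1}\phi_{\mathfrak{b}}(x_2)$ satisfies $\mathcal{L}_{\mathfrak{b},0}w=\beta_{\mathfrak{b}}w$. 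Fix $\rho\in C_c^\infty(\R)$ with $\supp\rho\subset[-1,1]$ and $\rho\not\equiv 0$, write $\rho_n(x_1):=\rho((x_1-2n)/n)$, and set
\begin{equation*}
  u_n:=c_n\,\rho_n\,w,\qquad c_n:=\left(n\,\norm{\rho}_{L^2(\R)}^2\,\norm{\phi_{\mathfrak{b}}}_{L^2(\R)}^2\right)^{-1/2},
\end{equation*}
so that $\norm{u_n}_{L^2(\R^2)}=1$ and, for $n\geq 1$, $\supp u_n\subset\{n\leq x_1\leq 3n\}\subset\{x_1>0\}$. The matching condition $\phi_{\mathfrak{b}}'(0_-)=\phi_{\mathfrak{b}}'(0_+)$ built into $\mathcal{D}(\mathfrak{h}_{\mathfrak{b}}[\xi_{\mathfrak{b}}])$ ensures $u_n\in\mathcal{D}(\mathcal{L}_{\mathfrak{b},\delta})$, and $\mathcal{L}_{\mathfrak{b},\delta}u_n=\mathcal{L}_{\mathfrak{b},0}u_n$. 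The Leibniz rule together with $\mathcal{L}_{\mathfrak{b},0}w=\beta_{\mathfrak{b}}w$ gives
\begin{equation*}
  (\mathcal{L}_{\mathfrak{b},\delta}-\beta_{\mathfrak{b}})u_n=-c_n\rho_n''\,w-2c_n\rho_n'\,(\partial_{x_1}-iA_{\mathfrak{b},0})w,
\end{equation*}
and since $\norm{\rho_n'}_{L^2(\R)}=\mathcal{O}(n^{-1/2})$, $\norm{\rho_n''}_{L^2(\R)}=\mathcal{O}(n^{-3/2})$, $c_n=\mathcal{O}(n^{-1/2})$, while the $x_2$-dependent factors of $w$ and of $(\partial_{x_1}-iA_{\mathfrak{b},0})w$, namely $\phi_{\mathfrak{b}}$ and $i(\xi_{\mathfrak{b}}-A_{\mathfrak{b},0})\phi_{\mathfrak{b}}$, both lie in $L^2(\R)$ (the latter because $\phi_{\mathfrak{b}}\in B^1(\R)$), we obtain $\norm{(\mathcal{L}_{\mathfrak{b},\delta}-\beta_{\mathfrak{b}})u_n}_{L^2(\R^2)}=\mathcal{O}(n^{-1})\to 0$. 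As $\norm{u_n}_{L^2(\R^2)}=1$ and the supports of $u_n$ escape to infinity, $u_n\rightharpoonup 0$ in $L^2(\R^2)$; hence $(u_n)_n$ is a Weyl sequence for $\beta_{\mathfrak{b}}$ in the sense of Definition~\ref{Weyl sequence}, and the Weyl criterion recalled just after it gives $\beta_{\mathfrak{b}}\in\text{Spec}_{\text{ess}}(\mathcal{L}_{\mathfrak{b},\delta})$, so $\inf\text{Spec}_{\text{ess}}(\mathcal{L}_{\mathfrak{b},\delta})\leq\beta_{\mathfrak{b}}$.

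Combining the two bounds proves the asserted identity. The one genuinely delicate point is the Weyl sequence: the localization window in $x_1$ must be taken to grow (so the commutator errors, which carry inverse powers of the window width, vanish) and simultaneously to recede to $+\infty$ (so it lies inside $\{x_1>0\}$, where $\mathcal{L}_{\mathfrak{b},\delta}=\mathcal{L}_{\mathfrak{b},0}$, and so that $u_n\rightharpoonup 0$); the exponential decay of $\phi_{\mathfrak{b}}$ recorded in Remark~\ref{Remark b cases} makes any cut-off in the $x_2$-variable unnecessary. The lower bound, by contrast, is immediate from Lemma~\ref{Lemma appendix} and Persson's lemma.
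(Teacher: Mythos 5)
Your proof is correct, and you rightly read the statement as being about $\inf\text{Spec}_{\text{ess}}$ (the theorem as printed says $\inf\text{Spec}$, which is inconsistent with Theorem~\ref{Main Theorem} for $\mathfrak{b}=(1,b)$ and small $\delta$; the paper's own proof, like yours, only establishes the essential-spectrum identity via $\Sigma=\beta_{\mathfrak{b}}$). Your lower bound is identical to the paper's: Lemma~\ref{Lemma appendix} gives $\Sigma_R\geq\beta_{\mathfrak{b}}-CR^{-2}$, hence $\Sigma\geq\beta_{\mathfrak{b}}$ by Theorem~\ref{Theorem Persson's Lemma}. For the upper bound the two arguments share the same geometric engine — $\mathcal{A}_{\mathfrak{b},\delta}=\mathcal{A}_{\mathfrak{b},0}$ on $\{x_1>0\}$, so far to the right the operator is the translation-invariant model — but package it differently. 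The paper stays inside Persson's framework: it takes an abstract compactly supported quasi-mode for $\mathcal{L}_{\mathfrak{b},0}$ with Rayleigh quotient $\leq\beta_{\mathfrak{b}}+\epsilon$ (from the min-max principle plus a cut-off/limiting argument) and translates it into $\{x_1>0\}\setminus D(0,R)$ to conclude $\Sigma_R\leq\beta_{\mathfrak{b}}+\epsilon$. You instead build an explicit singular Weyl sequence from the generalized eigenfunction $e^{i\xi_{\mathfrak{b}}x_1}\phi_{\mathfrak{b}}(x_2)$, with an $x_1$-window of width $n$ receding to $+\infty$, and verify $\|(\mathcal{L}_{\mathfrak{b},\delta}-\beta_{\mathfrak{b}})u_n\|=\mathcal{O}(n^{-1})$ by hand; your bookkeeping of the commutator terms and of the $L^2$-membership of $(\xi_{\mathfrak{b}}+\sigma_{\mathfrak{b}}(x_2)x_2)\phi_{\mathfrak{b}}$ is accurate, as is the remark that the matching condition at $x_2=0$ keeps $u_n$ in the operator domain. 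Your route is more concrete and bypasses the ``standard limiting argument'' the paper leaves implicit, at the cost of relying on the existence of the fiber ground state $\phi_{\mathfrak{b}}$ (which holds precisely in the trapping cases hypothesized). One caveat on a parenthetical aside: your claim that $\inf\text{Spec}$ and $\inf\text{Spec}_{\text{ess}}$ coincide for $\mathfrak{b}=(b,1)$ does not follow from the sign of the $\delta^2$-correction in Theorem~\ref{Main Theorem} — the failure of one trial-state construction does not exclude eigenvalues below the essential spectrum — but this remark plays no role in your argument.
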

	\begin{proof}
		By Theorem \ref{Theorem Persson's Lemma}, it is enough to show that \(\Sigma = \beta_{\mathfrak{b}}\). 
		\begin{itemize}
			\item \( \beta_{\mathfrak{b}} \leq \Sigma\). By \eqref{Sigma_R} and Lemma \ref{Lemma appendix} we know that for any \(R>0\)
			\begin{equation*}
				\Sigma_R \geq \beta_{\mathfrak{b}} - \frac{C}{R^2}. 
			\end{equation*}
			Taking \(R\rightarrow +\infty\) gives the desired inequality.
			\item \(\beta_{\mathfrak{b}} \geq \Sigma\). Let \(\epsilon, R >0\), in \eqref{Beta a definition} we saw that \(\beta_{\mathfrak{b}}\) is the bottom of the spectrum of \(\mathcal{L}_{\mathfrak{b}, 0}\). Then, combining min-max principle with a standard limiting argument we can find an \(R>0\) and function \(u\in \mathcal{D}(\mathcal{Q}_{a,b})\) with \(\text{supp }u \subset D(0,R)\) such that 
			\begin{equation*}
				\beta_{\mathfrak{b}}\leq \frac{\mathcal{Q}_{\mathfrak{b}, 0}(u)}{\norm{u}_{L^2(\R^2)}^2}\leq \beta_{\mathfrak{b}} + \epsilon . 
			\end{equation*}
			Notice that \(\mathcal{A}_{\mathfrak{b}, \delta}(x) =\mathcal{A}_{\mathfrak{b}, 0}(x) \) for all \(x\in \R_+ \times \R\). Then we can define \(v(x_1, x_2) := u(x_1+2R, x_2)\) such that \(v\)  is supported in \((\R_+ \times \R) \cap (\R^2 \setminus D(0, R))\) and 
			\begin{equation*}
				\frac{\mathcal{Q}_{\mathfrak{b}, 0}(u)}{\norm{u}^2_{L^2(\R^2)}}=\frac{\mathcal{Q}_{\mathfrak{b},\delta}(v)}{\norm{v}^2_{L^2(\R^2)}}. 
			\end{equation*}
			Hence, 
			\begin{equation*}
				\Sigma_R \leq \frac{\mathcal{Q}_{\mathfrak{b},\delta}(v)}{\norm{v}^2_{L^2(\R^2)}} \leq \beta_{\mathfrak{b}} + \epsilon.
			\end{equation*}
			Taking \(\epsilon\) to zero and \(R\) to \(+\infty\) gives the result. 
		\end{itemize}
	\end{proof}
	\section{Decay properties of \(\phi_{\mathfrak{b}}\)}\label{AppendixB}
	To prove Theorem \ref{Main Theorem} we used decay properties of \(\phi_{\mathfrak{b}}\) and its derivative. To show this, we will follow \cite[Section 2.4]{Raymondbook} adapted to our case. To this end we need to introduce \(U(a,t)\) as the first Weber parabolic function which is a solution of the parabolic cylindric equation
	\begin{equation}\label{Weber function ODE}
		-u''(t) + \frac{1}{4}t^2 u(t) = -au(t) , 
	\end{equation}
	and let \(\hat{U}:= \Re (U)\). Note \eqref{Weber function ODE} has an irregular singular point at infinity, and an standard ODE theory\footnote{One can use the transformation \(x=1/t\) to study the singular point at zero and, after a Liouville transformation, use \cite[Theorem X.10]{ReedSimon2}. Using exponential substitution as in \cite[Chapter 3]{BenderOrszag} can give a first intuition of the eigenfunction asymptotics.} guarantees that \(\hat{U}(a,t)\) is the only solution in \(L^2\).  
	
	Since we are dealing with ``trapping magnetic steps'' (see Remark \ref{Remark b cases}), we can assume that \(b_2 =1 \). Let 
	\begin{equation}\label{Weber positive function}
		\phi_{\mathfrak{b}} (t) := \begin{cases}
			C \hat{U}\paren*{-\frac{\beta_{\mathfrak{b}}}{2} , \sqrt{2}(t+\xi_{\mathfrak{b}})} &  \text{ for } t\geq 0 \\
			C \hat{U}\paren*{-\frac{\beta_{\mathfrak{b}}}{2\abs{b_1}} , \sqrt{\frac{2}{|b_1|}}(b_1 t+\xi_{\mathfrak{b}})}  & \text{ for } t< 0
		\end{cases},
	\end{equation}
	where \(C>0\) is some normalization constant. We have that \(\mathfrak{h}_{\mathfrak{b}}[\xi_{\mathfrak{b}}]\phi_{\mathfrak{b}} = \beta_{\mathfrak{b}} \phi_{\mathfrak{b}}\) and \(\phi_{\mathfrak{b}}\) decays exponentially as \(t\rightarrow \pm \infty\). Moreover, we get that both \(\phi_{\mathfrak{b}}\) and \(\phi'_{\mathfrak{b}}\) are real-valued. 
	
	\section*{Acknowledgements}
	I would like to thank A. Kachmar for the suggestion and discussion of the problem, and M. Persson Sundqvist for his attentive reading and useful suggestions.
	\bibliographystyle{plain} 
	\bibliography{mybib} 
\end{document}